\newtheorem{theorem}{Theorem}[section]
\newtheorem{lemma}[theorem]{Lemma}
\newtheorem{proposition}[theorem]{Proposition}
\newtheorem{corollary}[theorem]{Corollary}
\theoremstyle{definition}
\newtheorem{definition}[theorem]{Definition}
\newtheorem{question}[theorem]{Question}
\newtheorem{remark}[theorem]{Remark}
\begin{document}
\sloppy

\title[Amalgamations of hyperbolic $3$-manifold groups are not LERF]{Geometrically finite amalgamations of hyperbolic $3$-manifold groups are not LERF}

\author{Hongbin Sun}
\address{Department of Mathematics, University of California at Berkeley, Berkeley, CA 94720, USA}
\email{hongbins@math.berkeley.edu}

\dedicatory{Dedicated to Professor Boju Jiang on his 80th birthday}


\subjclass[2010]{20E26, 57M05, 57M50, 22E40}
\thanks{The author is partially supported by NSF Grant No. DMS-1510383.}
\keywords{locally extended residually finite, graph of groups, hyperbolic $3$-manifolds, arithmetic hyperbolic manifolds}

\date{\today}
\begin{abstract}
We prove that, for any two finite volume hyperbolic $3$-manifolds, the amalgamation of their fundamental groups along any nontrivial geometrically finite subgroup is not LERF. This generalizes the author's previous work on nonLERFness of amalgamations of hyperbolic $3$-manifold groups along abelian subgroups. A consequence of this result is that closed arithmetic hyperbolic $4$-manifolds have nonLERF fundamental groups. Along with the author's previous work, we get that, for any arithmetic hyperbolic manifold with dimension at least $4$, with possible exceptions in $7$-dimensional manifolds defined by the octonion, its fundamental group is not LERF.
\end{abstract}

\maketitle
\vspace{-.5cm}
\section{Introduction}

For a group $G$ and a subgroup $H < G$, we say that $H$ is {\it separable} in $G$ if for any $g\in G\setminus H$, there exists a finite index subgroup $G'<G$ such that $H<G'$ and $g\notin G'$. $G$ is {\it LERF} (locally extended residually finite) or {\it subgroup separable} if all finitely generated subgroups of $G$ are separable.

The LERFness of groups is a property closely related with low dimensional topology, especially the virtual Haken conjecture on $3$-manifolds (which is settled in \cite{Ag3}). The topological significance of LERFness is shown by the following picture: suppose we have a $\pi_1$-injective immersed compact object in a space $S\looparrowright M$ (e.g. a $\pi_1$-injective immersed surface in a $3$-manifold), if $\pi_1(S)$ is a separable subgroup of $\pi_1(M)$ (which holds if $\pi_1(M)$ is LERF), then $S$ lifts to an embedded object in some finite cover of $M$.

Among fundamental groups of low dimensional manifolds, it is known that following groups are LERF: free groups (\cite{Ha}), surface groups (\cite{Sc}) and geometric $3$-manifolds groups (the Seifert fibered space case, Sol manifold case and hyperbolic case are settled in \cite{Sc}, \cite{Ma} and \cite{Ag3,Wi} respectively). It is also know that fundamental groups of non-geometric $3$-manifolds are not LERF, including: groups of nontrivial graph manifolds (\cite{NW}), and groups of mixed $3$-manifolds (\cite{Sun}, and the first such example is given in \cite{Liu}). So the LERFness of fundamental groups of compact $1$-manifolds, compact $2$-manifolds and compact $3$-manifolds with empty or tori boundary is completely determined.

For any $n\geq 4$ and any finitely presented group $G$, there exists a closed smooth $n$-manifold with fundamental group isomorphic to $G$. So it is impossible to have a complete criterion of LERFness as dimension $\leq 3$ case, and we may restrict to some special class of manifolds. In \cite{Sun}, it is shown that, for all arithmetic hyperbolic manifolds with dimension at least $4$, with possible exceptions in closed $4$-dimensional manifolds and $7$-dimensional manifolds defined by the octonion, their fundamental groups are not LERF.

The result in \cite{Sun} on high dimensional arithmetic hyperbolic manifold groups is a corollary of $3$-dimensional results, including nonLERFness of mixed $3$-manifold groups, and another result on nonLERFness of $\mathbb{Z}$-amalgamations of finite volume hyperbolic $3$-manifold groups (\cite{Sun}). A special family of mixed $3$-manifold groups consists of $\mathbb{Z}^2$-amalgamations of hyperbolic $3$-manifold groups, so both results are about abelian amalgamations of finite volume hyperbolic $3$-manifold groups.

\bigskip

In this paper, we give a more general result on nonLERFness of amalgamations of finite volume hyperbolic $3$-manifold groups.

\begin{theorem}\label{main}
Let $M_1,M_2$ be two finite volume hyperbolic $3$-manifolds, $A$ be a nontrivial finitely generated group and $i_1:A\to \pi_1(M_1)$, $i_2:A\to \pi_1(M_2)$ be two injective homomorphisms with geometrically finite images, then the amalgamation $\pi_1(M_1)*_A \pi_1(M_2)$ is not LERF.
\end{theorem}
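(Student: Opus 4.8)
The plan is to find, inside $G:=\pi_1(M_1)*_A\pi_1(M_2)$, a subgroup that is not LERF; since LERF passes to subgroups — if $L\le K\le G$ and $L$ is separable in $G$, then for $g\in K\setminus L$ a finite index $G'\le G$ with $L\le G'$, $g\notin G'$ restricts to $G'\cap K$, which has finite index in $K$, contains $L$, and misses $g$ — this suffices to conclude that $G$ is not LERF. Throughout I use that $\pi_1(M_1)$ and $\pi_1(M_2)$ are LERF by \cite{Ag3,Wi}; that finitely generated geometrically finite (equivalently, relatively quasiconvex) subgroups of a finite volume hyperbolic $3$-manifold group are separable, and in fact virtual retracts; and the Bass--Serre action of $G$ on its tree $T$, with vertex stabilisers the conjugates of $\pi_1(M_1),\pi_1(M_2)$ and edge stabilisers the conjugates of $A$, so that two distinct vertex stabilisers intersect in the identity or in a conjugate of a subgroup of $A$.

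First I would dispose of the elementary case: a nontrivial torsion free virtually abelian subgroup of a hyperbolic $3$-manifold group is isomorphic to $\mathbb{Z}$ or $\mathbb{Z}^2$, so if $A$ is virtually abelian then $G$ is an amalgamation of hyperbolic $3$-manifold groups over $\mathbb{Z}$ or over a peripheral $\mathbb{Z}^2$, hence not LERF by \cite{Sun}. So assume $A$ is non-elementary; then both $i_1(A)$ and $i_2(A)$ contain loxodromic elements. The aim is to locate a subgroup of $G$ of the form $\Lambda_1*_D\Lambda_2$ (or an HNN analogue) with $\Lambda_1,\Lambda_2$ finite volume hyperbolic $3$-manifold groups and $D\cong\mathbb{Z}$ or $\mathbb{Z}^2$, so that \cite{Sun} applies. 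The cheapest attempt uses a conjugate intersection: for $g\in\pi_1(M_2)\setminus A$,
\[
\langle\pi_1(M_1),\,g\pi_1(M_1)g^{-1}\rangle=\pi_1(M_1)*_{A\cap gAg^{-1}}g\pi_1(M_1)g^{-1}\cong\pi_1(M_1)*_{A\cap gAg^{-1}}\pi_1(M_1),
\]
so it would be enough to find $g$ with $A\cap gAg^{-1}$ infinite cyclic (or a peripheral $\mathbb{Z}^2$); if some conjugate of $i_2(A)$ meets $i_2(A)$ along a single loxodromic axis, respectively along a cusp subgroup, such a $g$ exists and we are done.

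The hard part is the remaining ``thick'' case, where no conjugate of $A$ in either factor meets $A$ in an abelian subgroup — e.g.\ $A$ malnormal (already so for $A\cong\mathbb{Z}$, covered by \cite{Sun}, but also for malnormal higher-rank free or quasi-Fuchsian $A$), or a conjugate of $A$ commensurable with $A$ — so that one cannot cut the edge group down to an abelian one in this way, while the free product $\pi_1(M_1)*\pi_1(M_1)$ is LERF and useless. Here I would build the non-separable finitely generated subgroup by hand, adapting the surface construction behind \cite{Sun}: writing $A\cong\pi_1(W)$ for the compact pared $3$-manifold carried by a geometrically finite group, I would use the JSJ structure of $W$ and the separability/retraction machinery to pass to finite covers of $M_1,M_2$ in which a cover of $W$ is embedded, then glue $\pi_1$-injective surfaces in $M_1$ and in $M_2$ with matching traces on that copy of $W$ into an immersed surface $S$ in the graph of spaces modelling $G$, and show $\pi_1(S)$ is non-separable, because any finite cover to which $S$ lifts embedded would force an impossible pattern of intersections between the lift of $S$ and the lifts of $W$ — the Euler characteristic and boundary counting obstruction that drives the graph manifold \cite{NW} and abelian amalgamation \cite{Sun} cases. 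Making this run uniformly over all non-elementary geometrically finite $A$, with the Seifert pieces of $W$ treated as in \cite{NW} and the hyperbolic pieces as in \cite{Sun}, is the main obstacle; granted it, the theorem follows from \cite{Sun} and the inheritance of LERF by subgroups.
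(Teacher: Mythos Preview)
Your reduction strategy --- cutting the edge group down to an abelian one by intersecting conjugates --- does not work in the generic case, and you correctly identify this. But your fallback for the ``thick'' case is not a proof sketch; it is a hope. Two concrete problems:

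First, the pared manifold $W$ with $\pi_1(W)\cong A$ is hyperbolic (it is a quotient of $\mathbb{H}^3$ by a geometrically finite Kleinian group), so it has no Seifert pieces and no nontrivial JSJ structure. There is nothing for the \cite{NW} machinery to bite on. The suggestion to ``treat the Seifert pieces of $W$ as in \cite{NW} and the hyperbolic pieces as in \cite{Sun}'' therefore has no content here.

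Second, and more importantly, you are missing the mechanism the paper actually uses, which is not a JSJ-style cut-and-paste but an \emph{algebraically fibered structure} on a subgroup of $G$. The key chain of ideas is: (i) since $A$ is nontrivial, geometrically finite, and of infinite index, $b_1(A)\ge 1$, so there is a nontrivial $\gamma:A\to\mathbb{Z}$; (ii) since $A$ is geometrically finite it is a virtual retract of $\pi_1(M_k)$, so $\gamma$ extends to a class on a finite cover of $M_k$; (iii) by Agol's virtual fibering criterion this pullback can be made a quasi-fibered class in a further finite cover; (iv) using the openness of fibered cones in the Thurston norm one perturbs and matches these to get \emph{fibered} classes $\alpha_1,\alpha_2$ on finite covers $N_1,N_2$ with $\alpha_1|_{A'}=\alpha_2|_{A'}$ surjective. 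This produces a subgroup $\pi_1(N_1)*_{A'}\pi_1(N_2)$ with a surjection to $\mathbb{Z}$ whose kernel is generated by two fibered surface groups. From there one passes to a two-edge graph of groups $\pi_1(N_1)*_{A',A''}\pi_1(N_2)$ (using a ping-pong conjugate of $A$ and virtual retraction again), builds a ``generalized immersed surface'' $Z$ out of fibered surfaces joined by arcs, and shows $f_*(\pi_1(Z))$ is nonseparable by the degree-counting contradiction in the style of \cite{Liu,RW}. None of this appears in your outline.

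Your instinct that one should construct an explicit immersed object and obstruct its lifting by a covering-degree/intersection count is right, but the object is built from \emph{fibered surfaces} in the vertex manifolds, and the whole argument hinges on first arranging compatible fibered classes across the edge group. Without the virtual retract $+$ Agol $+$ Thurston norm input you have no way to produce those surfaces, and without them there is no obstruction to run.
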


Theorem \ref{main} implies the most interesting cases of Theorem 1.3 and Theorem 1.4 of \cite{Sun}: $\mathbb{Z}^2$- and $\mathbb{Z}$-amalgamations of finite volume hyperbolic $3$-manifold groups are not LERF.

Theorem \ref{main} might be a little bit surprising. It is known that all finite volume hyperbolic $3$-manifolds have LERF fundamental groups (\cite{Ag3,Wi}), and geometrically finite subgroups are considered to be "nice" subgroups of hyperbolic $3$-manifold groups. However, when we take amalgamation of two such LERF groups along a nontrivial "nice" subgroup, we get a nonLERF group. The main reason is that finite volume hyperbolic $3$-manifold groups have a lot of virtually fibered surface subgroups (geometrically infinite subgroups). They are "not so nice" subgroups of hyperbolic $3$-manifold groups, from geometric group theory point of view. Theorem \ref{main} may suggest that, although finite volume hyperbolic $3$-manifold groups are LERF, they are kind of on the border of LERF groups and nonLERF groups.

In \cite{Sun}, we used nonLERFness of $\mathbb{Z}^2$- and $\mathbb{Z}$-amalgamations of finite volume hyperbolic $3$-manifold groups to prove that most arithmetic hyperbolic manifolds with dimension $\geq 4$ have nonLERF fundamental groups. The cases \cite{Sun} does not cover are closed arithmetic hyperbolic $4$-manifolds and arithmetic hyperbolic $7$-manifolds defined by the octonion. The author still can not solve the $7$-dimensional octonion case since we do not know whether these manifolds contain totally geodesic $3$-dimensional submanifolds, or $3$-manifold groups as their subgroups, so our tool in $3$-manifold topology is not applicable. The closed $4$-dimensional case could not be solved in \cite{Sun} since two $3$-manifolds in a $4$-manifold usually intersect along a surface, whose group is not abelian, while \cite{Sun} only dealt with abelian amalgamations. The case that two hyperbolic $3$-manifolds intersecting along a totally geodesic subsurface is covered by Theorem \ref{main}. So we can prove the following result, which is one of the motivation of Theorem \ref{main}.

\begin{theorem}\label{arithmetichyperbolic4}
For any closed arithmetic hyperbolic $4$-manifold, its fundamental group is not LERF.
\end{theorem}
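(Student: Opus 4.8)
Write the given manifold as $N=\mathbb{H}^4/\Gamma$. The plan is to find, inside $\pi_1$ of a finite cover of $N$ (hence inside $\pi_1(N)$ itself), a finitely generated subgroup isomorphic to an amalgamated product $\pi_1(M_1)*_A\pi_1(M_2)$ satisfying the hypotheses of Theorem~\ref{main}, and then to conclude with the standard fact that LERFness passes to finitely generated subgroups. So it suffices to produce one such subgroup. Recall that every arithmetic lattice in $\mathrm{Isom}(\mathbb{H}^4)$ is of simplest type; after passing to a commensurable lattice we may therefore assume that $\Gamma$ is (commensurable with) $\mathrm{SO}(f,\mathcal{O}_k)$ for a quadratic form $f$ in five variables over a totally real field $k$, of signature $(4,1)$ at one distinguished real place and positive definite at all other real places of $k$. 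Since $N$ is closed, $f$ is anisotropic over $k$.

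Next we build the totally geodesic pieces from $k$-rational subspaces of $(k^5,f)$. Choose a $3$-dimensional $k$-rational subspace $U$ on which $f$ has signature $(2,1)$ at the distinguished place, which is possible because $k$-rational directions are dense; then $f$ is positive definite on the $2$-dimensional space $U^{\perp}$. Fix a $k$-basis $e_1,e_2$ of $U^{\perp}$. Each $e_i$ is anisotropic, $f$ restricted to $e_i^{\perp}$ has signature $(3,1)$, $U\subset e_i^{\perp}$, and $e_1^{\perp}\cap e_2^{\perp}=\langle e_1,e_2\rangle^{\perp}=U$. Set $\Gamma_i=\mathrm{Stab}_{\Gamma}(e_i)$ and $\Gamma_0=\Gamma_1\cap\Gamma_2=\mathrm{Stab}_{\Gamma}(e_1)\cap\mathrm{Stab}_{\Gamma}(e_2)$. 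By Borel--Harish-Chandra, $\Gamma_i$ is a lattice in the isometry group of the totally geodesic copy $\mathbb{H}^3_i\subset\mathbb{H}^4$ cut out by $e_i^{\perp}$; it is cocompact because $f|_{e_i^{\perp}}$ is a subform of an anisotropic form, hence anisotropic. Likewise $\Gamma_0$ is a cocompact Fuchsian group acting on $\mathbb{H}^2=\mathbb{H}^3_1\cap\mathbb{H}^3_2$. Replacing $\Gamma$ by a torsion-free finite-index subgroup (Selberg's lemma) and intersecting, we obtain a finite cover $N'$ of $N$ that contains immersed totally geodesic closed hyperbolic $3$-manifolds $M_1,M_2$ and a closed hyperbolic surface $\Sigma$ with $\pi_1(M_i)=\Gamma_i$ and $\pi_1(\Sigma)=\Gamma_0\le\Gamma_i$; since $\Sigma$ lies totally geodesically in $M_i$, the subgroup $\pi_1(\Sigma)<\pi_1(M_i)$ is Fuchsian, hence geometrically finite, and it is nontrivial and finitely generated.

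It remains to promote $\langle\Gamma_1,\Gamma_2\rangle\le\pi_1(N')$ to the full amalgam $\Gamma_1*_{\Gamma_0}\Gamma_2$. The approach we would take is to pass to a further finite cover $N''$ in which $M_1$ and $M_2$ lift to embedded submanifolds whose intersection is a single lift of $\Sigma$, so that $M_1\cup_{\Sigma}M_2$ is an embedded subcomplex of $N''$; then, provided this subcomplex is $\pi_1$-injective, van Kampen's theorem identifies $\pi_1(M_1\cup_{\Sigma}M_2)$ with $\pi_1(M_1)*_{\pi_1(\Sigma)}\pi_1(M_2)$ and exhibits it as a subgroup of $\pi_1(N'')\le\pi_1(N)$. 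The existence of such a cover is the heart of the argument. The subgroups $\Gamma_1,\Gamma_2,\Gamma_0$ are stabilizers of rational vectors, hence separable in $\Gamma$ by an elementary congruence argument: if $g$ moves $e_i$, then $\Gamma_i$ together with a suitable principal congruence subgroup spans a finite-index subgroup that contains $\Gamma_i$ but not $g$. Using such separability one can arrange that $M_1$ and $M_2$ are embedded, that their only common part is $\Sigma$, and that the pattern of $\Gamma$-translates of $\mathbb{H}^3_1$ and $\mathbb{H}^3_2$ in $\mathbb{H}^4$ is combinatorially simple enough --- essentially tree-like, with no cycles of mutually crossing hyperplanes --- for the preimage of $M_1\cup_{\Sigma}M_2$ to be connected and simply connected, which is exactly $\pi_1$-injectivity of the union. (Equivalently, one could try to prove $\langle\Gamma_1,\Gamma_2\rangle=\Gamma_1*_{\Gamma_0}\Gamma_2$ directly by a Klein--Maskit combination argument after passing to appropriate finite-index subgroups of $\Gamma_1$ and $\Gamma_2$, which reduces to the same control over how these groups move the hyperplanes.) Obtaining this combinatorial control over the intersection pattern --- removing spurious intersections and triangles of mutually crossing totally geodesic lifts in a finite cover --- is the main obstacle I anticipate. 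Once the amalgam is realized, Theorem~\ref{main} applies directly, since $M_1$ and $M_2$ are finite-volume hyperbolic $3$-manifolds and $\pi_1(\Sigma)$ is a nontrivial finitely generated group with geometrically finite image in each factor; hence $\pi_1(M_1)*_{\pi_1(\Sigma)}\pi_1(M_2)$ is not LERF, and since it is a finitely generated subgroup of $\pi_1(N)$, the group $\pi_1(N)$ is not LERF either.
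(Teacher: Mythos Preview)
Your overall strategy matches the paper's: both exploit that closed arithmetic hyperbolic $4$-manifolds are of simplest type, locate two totally geodesic $3$-dimensional pieces meeting along a totally geodesic surface, and apply Theorem~\ref{main} to the resulting amalgam. You also correctly isolate the one nontrivial step --- showing that $\Gamma_1*_{\Gamma_0}\Gamma_2$ injects into the ambient lattice --- but you leave it as ``the main obstacle I anticipate'' rather than resolving it, and the route you sketch (passing to a finite cover of $N$ in which $M_1\cup_\Sigma M_2$ is embedded and $\pi_1$-injective, controlling the full intersection pattern of translates of the hyperplanes) is considerably harder than what is actually needed.

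The paper closes this gap more cheaply, and the two missing ingredients are exactly the ones your Klein--Maskit alternative would need. First, diagonalize $f$ so that the two $4$-dimensional subforms share three diagonal entries; then the hyperplanes $P_1,P_2\subset\mathbb{H}^4$ meet \emph{perpendicularly} along $P$. Second, do not pass to a finite cover of the $4$-manifold at all: instead use LERFness of hyperbolic $3$-manifold (orbifold) groups to pass to finite-index torsion-free $\Lambda_i<\Gamma_i$ with $\Lambda_1\cap\Gamma_0=\Lambda_2\cap\Gamma_0=\Lambda$ and such that $S=P/\Lambda$ has an arbitrarily large product neighborhood in each $N_i=P_i/\Lambda_i$. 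Perpendicularity together with these large collars gives the combination argument immediately: the obvious map $N_1\cup_S N_2\to \mathbb{H}^4/SO_0(f;\mathcal{O}_K)$ is locally an embedding near $S$, and a reduced word alternating between $\Lambda_1\setminus\Lambda$ and $\Lambda_2\setminus\Lambda$ pushes points away from $P$, so $\Lambda_1*_{\Lambda}\Lambda_2\to SO_0(f;\mathcal{O}_K)$ is injective. In short, your instinct to try Klein--Maskit was right; what you were missing is that diagonalizing forces orthogonality of the hyperplanes, and that $3$-manifold LERFness (not separability in $\Gamma$) suffices to manufacture the needed collars.
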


Along with Theorem 1.1 and 1.2 of \cite{Sun}, we get the following corollary.
\begin{corollary}\label{arithmetichyperbolic}
For any arithmetic hyperbolic manifold with dimension at least $4$ and not defined by the octonion (which only show up in dimension $7$), its fundamental group is not LERF.
\end{corollary}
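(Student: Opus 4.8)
The plan is to deduce Corollary \ref{arithmetichyperbolic} from Theorem \ref{arithmetichyperbolic4} together with the main results of \cite{Sun}, by a short case analysis on the dimension and on whether the manifold is compact. Let $N$ be an arithmetic hyperbolic $n$-manifold with $n\geq 4$ that is not defined by the octonion. Since $N$ has finite volume, it is either closed or has at least one cusp end; these are the only possibilities, and this dichotomy is the only structural input needed.

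First, suppose $n\geq 5$, or $n=4$ and $N$ is noncompact. In this range Theorems 1.1 and 1.2 of \cite{Sun} already assert that $\pi_1(N)$ is not LERF: the only arithmetic hyperbolic manifolds of dimension $\geq 4$ left untreated there are the closed $4$-dimensional ones and the $7$-dimensional ones defined by the octonion, and $N$ is neither. So for this case I would simply invoke those results; nothing new is required.

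Second, suppose $n=4$ and $N$ is compact, hence closed. Then $\pi_1(N)$ is not LERF by Theorem \ref{arithmetichyperbolic4}. This is precisely the case that was missing from \cite{Sun}, and it is the content this paper supplies, ultimately resting on Theorem \ref{main} applied to two totally geodesic hyperbolic $3$-submanifolds of $N$ meeting along a totally geodesic surface (whose fundamental group is a nontrivial finitely generated geometrically finite subgroup of each side). Combining the two cases, $\pi_1(N)$ is not LERF for every arithmetic hyperbolic manifold of dimension $\geq 4$ not defined by the octonion, which is the assertion of the corollary. (If one preferred, one could first reduce to a single manifold in each commensurability class, since LERFness is a commensurability invariant among finitely generated groups, but both cited inputs are already stated for arbitrary such manifolds, so no reduction is needed.)

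I do not expect any genuine obstacle in the corollary itself: the only point requiring a moment's care is checking that the two cases above really exhaust all admissible $N$, which is immediate because every finite-volume hyperbolic manifold is closed or cusped and because, outside the octonionic family (which occurs only in dimension $7$), the enumerated cases cover all dimensions $\geq 4$. The substance of the argument lies entirely in Theorem \ref{arithmetichyperbolic4} and, behind it, in Theorem \ref{main} and the results of \cite{Sun}; the corollary is a bookkeeping assembly of these.
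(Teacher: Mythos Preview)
Your proposal is correct and matches the paper's own treatment: the paper states Corollary \ref{arithmetichyperbolic} as an immediate consequence of Theorem \ref{arithmetichyperbolic4} together with Theorems 1.1 and 1.2 of \cite{Sun}, with no further proof given, and your case split (closed $4$-dimensional versus everything else not octonionic) is exactly the intended bookkeeping. The only remark is that your compact/noncompact dichotomy in dimension $4$ is slightly more explicit than what the paper writes, but it is precisely the content behind the paper's one-line deduction.
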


This corollary seems also suggest that hyperbolic $3$-manifold groups are on the border of LERF groups and nonLERF groups, since if we increase the dimension by $1$, we goes from LERF groups to nonLERF groups. This is mainly because that high dimensional arithmetic hyperbolic manifold groups contain many geometrically infinite subgroups which are not as nice as virtually fibered subgroups in dimension $3$. Note that for arithmetic hyperbolic manifolds of simplest type (defined by quadratic forms over totally real number fields), it is known that their geometrically finite subgroups are separable (\cite{BHW}).

\bigskip

The main idea of the proof of Theorem \ref{main} is similar to the proof of Theorem 1.3 and 1.4 of \cite{Sun}. To realize the idea as a mathematical proof, we need to take care of the following two points.

The first point is to show that $\pi_1(M_1)*_A \pi_1(M_2)$ has a subgroup with nontrivial induced graph of group structure that is "algebraically fibered". For an amalgamation $\pi_1(M_1)*_A \pi_1(M_2)$, by saying it is "algebraically fibered" , we mean that there are two fibered cohomology classes in $H^1(M_1;\mathbb{Z})$ and $H^1(M_2;\mathbb{Z})$ respectively, such that their restrictions on $H^1(A;\mathbb{Z})$ are the same nontrivial cohomology class. We care about such algebraically fibered structures since nonseparable subgroups we will get are constructed by "pasting" fibered surface subgroups in different vertex pieces together carefully. The "algebraically fibered" structure makes this pasting construction along $A$ applicable.

In \cite{Sun}, the existence of an "algebraically fibered" structure on a subgroup of $\pi_1(M_1)*_{\mathbb{Z}} \pi_1(M_2)$ is almost for free, and the existence of an "algebraically fibered" structure on a subgroup of $\pi_1(M_1)*_{\mathbb{Z}^2} \pi_1(M_2)$ follows from the work of Przytycki-Wise (\cite{PW}). However, for $\pi_1(M_1)*_A \pi_1(M_2)$ with a general group $A$, we need to prove the existence of an "algebraically fibered" structure. Moreover, for a general group $A$, the corresponding topological space is usually not a genuine fiber bundle over circle, so we need the notion of "algebraically fibered" structure here.

We will use Agol's construction of virtual fibered structures (\cite{Ag2}) and the virtual retract property of geometrically finite subgroups (\cite{CDW}) to prove the existence of an "algebraically fibered" structure on a subgroup of $\pi_1(M_1)*_A \pi(M_2)$ with nontrivial induced graph of group structure. The precise statement and its proof is given in Section \ref{algebraicfibering}.

The second point is that nonseparable subgroups we will construct are usually infinitely presented, so they may not be carried by $\pi_1$-injective compact objects. The nonseparable subgroups constructed in \cite{Sun} are always realized by $\pi_1$-injective compact objects (surfaces or one point unions of surfaces). However, for a general geometrically finite subgroup $A$ of a finite volume hyperbolic $3$-manifold group $\pi_1(M)$, and a cohomology class $\alpha\in H^1(M;\mathbb{Z})$, if we consider $\alpha$ as a homomorphism $\alpha:\pi_1(M)\to \mathbb{Z}$, the kernel of its restriction on $A$ might be infinitely generated.

Since we will "paste" fibered surface subgroups in different vertex pieces to get nonseparable subgroups, we may need to "paste" them along infinitely generated subgroups, and get finitely generated infinitely presented nonseparable subgroups. If we want to realize a finitely generated infinitely presented subgroup by a $\pi_1$-injective object, it must be noncompact. However, since Scott's topological interpretation of separability (\cite{Sc}) do need a compact object in the corresponding covering space, we will sacrifice the $\pi_1$-injectivity and make sure that the nonseparable subgroup is carried by a compact object.

The organization of this paper is as the following. In Section \ref{preliminary}, we will review some relevant background on geometric group theory, topology of hyperbolic $3$-manifolds, and arithmetic hyperbolic manifolds. In Section \ref{algebraicfibering}, we will show that any nontrivial geometrically finite amalgamation $\pi_1(M_1)*_A \pi_1(M_2)$ of two finite volume hyperbolic $3$-manifold groups has a subgroup that has nontrivial induced graph of group structure and is algebraically fibered. In Section \ref{nonseparablesubgroup}, we give the proof of Theorem \ref{main}. In the proof, we first construct a further subgroup of the algebraically fibered group we got in Section \ref{algebraicfibering}, and construct a topological space $X$ with $\pi_1(X)$ isomorphic to this subgroup. Then we construct a map $f:Z\to X$ from a compact $2$-dimensional complex $Z$ to $X$, and show that $f_*(\pi(Z))<\pi_1(X)$ is not separable. In Section \ref{arithmetic4section}, we give the proof of Theorem \ref{arithmetichyperbolic4}. In Section \ref{furtherquestions}, we ask some further questions related to results in this paper.

\subsection*{Acknowledgement}
The author thanks Ian Agol and Daniel Groves for valuable conversations.

\section{Preliminaries}\label{preliminary}

In this section, we review some relevant background on geometric group theory, topology of hyperbolic $3$-manifolds, and arithmetic hyperbolic manifolds. This section has some overlap with Section 2 of \cite{Sun}.

\subsection{Subgroup separability}

In this subsection, we review basic concepts and properties on subgroup separability.

\begin{definition}
Let $G$ be a group, and $H<G$ be a subgroup, we say that $H$ is {\it separable} in $G$ if for any $g\in G\setminus H$, there exists a finite index subgroup $G'<G$ such that $H<G'$ and $g\notin G'$.
\end{definition}

It is obvious that finite index subgroups are always separable, so we are mainly interested in infinite index subgroups when we talk about separability.

\begin{definition}
A group $G$ is {\it LERF} (locally extended residually finite) or {\it subgroup separable} if all finitely generated subgroups of $G$ are separable in $G$.
\end{definition}

Here are two basic results on LERFness of groups, and we will use them implicitly in this paper.
\begin{itemize}
  \item If $A$ and $B$ are two LERF groups, then $A*B$ is also LERF.
  \item If $G$ is a group and $G'<G$ is a finite index subgroup, then $G$ is LERF if and only if $G'$ is LERF.
\end{itemize}

A more elementary property on LERFness of groups is that any subgroup of a LERF group is still LERF, and we state it as in the following lemma. As in \cite{Sun}, this property is fundamental for our proof on nonLERFness of groups: to prove a group is not LERF, we only need to find a manageable subgroup (e.g. amalgamation of $3$-manifold groups) and show this subgroup is not LERF.

\begin{lemma}\label{subgroup}
Let $G$ be a group and $\Gamma<G$ be a subgroup. For a further subgroup $H<\Gamma$, if $H$ is separable in $G$, then $H$ is separable in $\Gamma$.

In particular, if $\Gamma$ is not LERF, then $G$ is not LERF.
\end{lemma}

\subsection{Fibered structures of $3$-manifolds and quasi-fibered classes}

In this subsection, we always assume $3$-manifolds are compact, connected, oriented, irreducible and with empty or tori boundary.

By a fibered structure of a $3$-manifold, we mean a surface bundle over circle structure. For a fibered structure of $M$, there exist a compact oriented surface $S$ and an orientation-preserving surface automorphism $f:S\to S$, such that $M$ is homeomorphic to the mapping torus $S\times I/(s,0)\sim (f(s),1)$ with respect to the fibered structure. So we get a homology class $[S,\partial S]\in H_2(M,\partial M;\mathbb{Z})$ and its dual cohomology class $\alpha\in H^1(M;\mathbb{Z})$. They both correspond to the fibered structure of $M$, and we also consider $\alpha \in H^1(M;\mathbb{Z})$ as a homomorphism $\alpha: \pi_1(M)\to \mathbb{Z}$.

If a $3$-manifold $M$ has a fibered structure and $b_1(M)>1$, then $M$ has infinitely many fibered structures. These fibered structures are organized by the Thurston norm on $H^1(M; \mathbb{R})$ (defined in \cite{Th2}). We will not give the definition of Thurston norm here, but we need the following facts on Thurston norm (see \cite{Th2}).

In general, the Thurston norm is only a semi-norm, and it is a genuine norm when $M$ is a finite volume hyperbolic $3$-manifold. The unit ball of Thurston norm is a polyhedron in $H^1(M;\mathbb{R})$ with finitely many faces. For a top dimensional open face $F$ of the Thurston norm unit ball, let $C$ be the open cone over $F$. In \cite{Th2}, Thurston proved that an integer point $\alpha\in H^1(M;\mathbb{Z})\subset H^1(M;\mathbb{R})$ corresponds to a fibered structure of $M$ if and only if $\alpha$ is contained in an open cone $C$ as above and all integer points in $C$ correspond to fibered structures of $M$. In this case, the open cone $C$ is called a {\it fibered cone}. For any real coefficient cohomology class $\alpha\in C\subset H^1(M;\mathbb{R})$, it is called a {\it fibered class}.

All integer fibered classes in $H^1(M;\mathbb{Z})$ correspond to genuine fibered structures of $M$, and their image in $PH^1(M;\mathbb{Q})$ (the projectivization of $H^1(M;\mathbb{Q})$) form an open subset of $PH^1(M;\mathbb{Q})$. In particular, we have the following lemma on fibered classes.

\begin{lemma}\label{perturbation}
Suppose that $\alpha\in H^1(M;\mathbb{Z})$ is a fibered class of a $3$-manifold $M$, then for any $\beta \in H^1(M;\mathbb{Z})$, there exists $N\in \mathbb{Z}^+$, such that for any $n>N$, $n\alpha\pm \beta\in H^1(M;\mathbb{Z})$ are both fibered classes.
\end{lemma}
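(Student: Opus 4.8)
The plan is to use the structure of the Thurston norm unit ball recalled above, together with the fact that the fibered classes in $H^1(M;\mathbb{R})$ form an open cone over a top-dimensional face. Let $C$ be the fibered cone containing $\alpha$, so $C$ is an open cone in $H^1(M;\mathbb{R})$ and every integer point of $C$ is a fibered class. The idea is simply that for fixed $\beta$, the rays through $n\alpha + \beta$ and $n\alpha - \beta$ converge to the ray through $\alpha$ as $n \to \infty$; since $C$ is open and contains $\alpha$, these rays must eventually enter $C$.

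First I would set up the affine slice: choose a linear functional (or just work directly with the open cone) and observe that a point $v \in H^1(M;\mathbb{R}) \setminus \{0\}$ lies in $C$ if and only if its image in the projectivization lies in the open set $PC \subset PH^1(M;\mathbb{R})$. The point $[\alpha]$ lies in $PC$. Now consider the sequences $v_n^{\pm} = n\alpha \pm \beta$. Dividing by $n$, we have $\tfrac{1}{n}v_n^{\pm} = \alpha \pm \tfrac{1}{n}\beta \to \alpha$ in $H^1(M;\mathbb{R})$, hence $[v_n^{\pm}] \to [\alpha]$ in $PH^1(M;\mathbb{R})$. Since $PC$ is open and contains $[\alpha]$, there exists $N \in \mathbb{Z}^+$ such that for all $n > N$, both $[v_n^{+}]$ and $[v_n^{-}]$ lie in $PC$, i.e. $n\alpha + \beta$ and $n\alpha - \beta$ lie in $C$. (For $n$ large these points are nonzero, so the projectivization makes sense; in any case $n\alpha + \beta \ne 0$ once $n$ exceeds the size of the coefficients of $\beta$ relative to $\alpha$, since $\alpha \ne 0$ as it is a fibered class.)

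Finally, since $n\alpha \pm \beta \in H^1(M;\mathbb{Z})$ is an integer point lying in the fibered cone $C$, Thurston's theorem (recalled above) says it corresponds to a genuine fibered structure of $M$, hence is a fibered class, which is what we wanted.

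I do not expect any real obstacle here: the only point requiring a word of care is the passage to the projectivization and the elementary observation that scaling a cone member by a positive real keeps it in the cone, so that "$[v_n^{\pm}] \in PC$" is equivalent to "$v_n^{\pm} \in C$". Everything else is the continuity of projectivization and the openness of $PC$, both already recorded in the discussion preceding the lemma.
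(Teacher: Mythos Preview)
Your proof is correct and follows exactly the approach the paper intends: the paper does not give an explicit proof of this lemma but presents it as an immediate consequence of the preceding remark that integer fibered classes form an open subset of $PH^1(M;\mathbb{Q})$ (equivalently, that the fibered cone $C$ is open in $H^1(M;\mathbb{R})$). Your argument via $\tfrac{1}{n}(n\alpha\pm\beta)\to\alpha$ and openness of $C$ is precisely the intended justification.
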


The following definition of quasi-fibered class is given in \cite{FV}:
\begin{definition}
For a cohomology class $\alpha \in H^1(M;\mathbb{Z})-\{0\}$, $\alpha$ is a {\it quasi-fibered class} if $\alpha$ lies on the closure of a fibered cone $C$.
\end{definition}

In the proof of the virtual fibering conjecture, the last step is Agol's criteria for virtual fiberings (\cite{Ag2}). In \cite{Ag2} (see an alternative proof in \cite{FK}), Agol showed that if the fundamental group of a compact irreducible $3$-manifold $M$ with empty or tori boundary satisfies the RFRS property (residually finite rational solvable), then $M$ is virtually fibered. We will not give the definition of RFRS here. Along with Wise and his collaborators' works on geometric group theory (e.g. \cite{Wi}), Agol showed that all finite volume hyperbolic $3$-manifolds have virtually RFRS fundamental group (\cite{Ag3}), and solved the virtual fibering conjecture.

In \cite{Ag2}, Agol actually proved that, if $\pi_1(M)$ is RFRS, then for any nontrivial non-fibered cohomology class $\alpha\in H^1(M;\mathbb{Z})$, there exists a finite cover $f:M'\to M$ such that $f^*(\alpha)$ lies on the boundary of a fibered cone of $H^1(M';\mathbb{R})$, i.e. $f^*(\alpha)$ is a quasi-fibered class. In \cite{FV}, the following proposition on quasi-fibered classes is proved (Corollary 5.2 of \cite{FV}). Its proof is a direct application of the result in \cite{Ag2}, and we modify the statement in \cite{FV} a little bit.

\begin{theorem}\label{quasifiber}
Let $M$ be a $3$-manifold with virtually RFRS fundamental group, then there exists a finite regular cover $p:M'\to M$, such that for any nontrivial class $\alpha \in H^1(M;\mathbb{Z})$, $p^*(\alpha)\in H^1(M';\mathbb{Z})$ is a quasi-fibered class.
\end{theorem}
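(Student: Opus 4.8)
The plan is to reduce the statement to Agol's result (\cite{Ag2}) quoted above, combined with a finiteness/intersection argument over the finitely many cohomology classes that matter. First I would observe that the conclusion is really about the finitely many rays through integral points, and more precisely about the finitely many faces of the Thurston norm unit ball: a class $\alpha\in H^1(M;\mathbb{Z})$ is a quasi-fibered class in $M'$ precisely when the ray $\mathbb{R}_{>0}\cdot p^*(\alpha)$ meets the closure of some top-dimensional fibered cone of $H^1(M';\mathbb{R})$, and whether this happens depends only on which (relatively open) face of the Thurston norm ball of $M$ the ray $\mathbb{R}_{>0}\cdot\alpha$ passes through. Since the Thurston norm ball is a finite-sided rational polyhedron, there are only finitely many such faces, so it suffices to produce, for each face $G$ of the unit ball of $M$, a finite cover in which every rational class in the cone over $G$ becomes quasi-fibered, and then pass to a common finite regular cover.

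Next I would handle a single face $G$ as follows. Pick a single rational (hence, after scaling, integral) class $\alpha_G$ in the relative interior of the cone over $G$. If $\alpha_G$ is already a fibered class in $M$, nothing needs to be done for that face. Otherwise $\alpha_G$ is a nontrivial non-fibered class, and since $\pi_1(M)$ is virtually RFRS we may first pass to a finite cover with RFRS fundamental group and then invoke Agol's theorem from \cite{Ag2}: there is a further finite cover $f_G\colon M_G\to M$ such that $f_G^*(\alpha_G)$ lies on the boundary of a fibered cone $C_G\subset H^1(M_G;\mathbb{R})$, i.e. is quasi-fibered. The key point now is that being on the closure of the fibered cone $C_G$ is a \emph{face} condition: the entire relatively open face of the Thurston norm ball of $M_G$ containing $f_G^*(\alpha_G)$ lies on $\partial C_G$, because fibered cones are open cones over top-dimensional open faces and $f_G^*(\alpha_G)$ sits in the relative interior of the face of $M_G$'s norm ball that it determines (pulling back via $f_G$ sends the relative interior of the cone over $G$ into the relative interior of a single face of $M_G$, by naturality of the Thurston norm under finite covers). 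Hence \emph{every} class in $f_G^*(\text{cone over }G)$, in particular $f_G^*(\beta)$ for every integral $\beta$ whose ray lies in the cone over $G$, is quasi-fibered in $M_G$. Finally I would take $M'$ to be a finite regular cover of $M$ dominating all the finitely many $M_G$ (for instance the cover corresponding to the intersection of the finitely many conjugates of the finitely many relevant finite-index subgroups); naturality of the Thurston norm and of fibered/quasi-fibered cones under the further finite covering $M'\to M_G$ shows that $p^*(\beta)$ remains quasi-fibered in $M'$ for every nontrivial $\beta\in H^1(M;\mathbb{Z})$, which is exactly the assertion. Note the degenerate possibility that $p^*(\beta)=0$ for some nonzero $\beta$ cannot occur, since finite covers induce injections on rational cohomology, so the ray $\mathbb{R}_{>0}\cdot p^*(\beta)$ is always well defined.

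The main obstacle, and the only genuinely delicate point, is the claim that ``quasi-fibered'' is a face-wise condition and is inherited correctly both ways along finite covers: one must check that if a rational class lies in the relative interior of a face $G$ of the Thurston norm ball of $M$, then its pullback to a finite cover again lies in the relative interior of a single face (this uses that finite covers pull back the Thurston norm ball to the Thurston norm ball up to the covering degree, together with the fact that the pullback map is linear and injective, so it carries relative interiors of faces into relative interiors of faces), and conversely that if one class in the relative interior of a face of $M'$'s norm ball is quasi-fibered then the whole cone over that face consists of quasi-fibered classes — which is immediate from the definition of a fibered cone as the open cone over a top-dimensional open face, since the closure of that open cone is a union of cones over faces. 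Everything else is bookkeeping over the finite set of faces and the standard fact that a finite collection of finite-index subgroups has a common finite-index normal subgroup. I would also remark that this is precisely Corollary 5.2 of \cite{FV}, so the role of this proof is mainly to package their statement in the regular-cover form we need; I would cite \cite{FV} for the details and give only the above outline.
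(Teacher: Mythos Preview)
The paper does not give its own proof of this theorem: it simply quotes the result as Corollary~5.2 of \cite{FV}, noting that it is a direct application of Agol's virtual fibering criterion \cite{Ag2}. Your proposal is correct and is essentially the argument behind that corollary --- reduce to finitely many faces of the Thurston norm ball, apply Agol's theorem once per face, and pass to a common regular cover --- and you yourself end by citing \cite{FV}, so there is no discrepancy in approach. The one point worth tightening is your ``face-wise'' claim: it rests on Gabai's theorem that the Thurston norm scales by the degree under finite covers, which you use implicitly but do not name; once that is in hand, the convexity argument you sketch (minimal face containing a relative-interior point contains the whole face, and closures of fibered cones are unions of cones over subfaces) goes through cleanly.
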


So the process of pulling back a cohomology class to get a quasi-fibered class not only work for each cohomology class individually, but also work for all of them simultaneously.

\subsection{Infinite volume hyperbolic $3$-manifolds}

For infinite volume hyperbolic $3$-manifolds with finitely generated fundamental groups, there is a rich theory on such manifolds. In this paper, we are mainly interested in such manifolds that cover finite volume hyperbolic $3$-manifolds.

For a finite volume hyperbolic $3$-manifold $M$, we have the following dichotomy for a finitely generated infinite index subgroup $A<\pi_1(M)$ (the proof of this dichotomy is a combination of results in \cite{Th1}, \cite{Ca} and \cite{Ag1,CG}):
\begin{enumerate}
\item $A$ is a geometrically finite subgroup of $\pi_1(M)$. Equivalently, $A$ is (relatively) quasiconvex in the (relative) hyperbolic group $\pi_1(M)$, from geometric group theory point of view.
\item $A$ is a geometrically infinite subgroup of $\pi_1(M)$. It is equivalent to that $A$ is a virtually fibered surface subgroup of $M$.
\end{enumerate}
Here we do not give the definition of geometrically finite and geometrically infinite subgroups, the readers only need to know that if $A$ is not a virtually fibered surface subgroup, then it is a geometrically finite subgroup.

If $A<\pi_1(M)$ is a nontrivial finitely generated infinite index subgroup, the following lemma implies $b_1(A)\geq 1$. Actually, it holds for any nontrivial infinitely covolume discrete torsion-free subgroup of $\text{Isom}_+\mathbb{H}^3$, and it is well-known for experts on hyperbolic $3$-manifolds.

\begin{lemma}\label{b1positive}
If $A$ is a nontrivial finitely generated subgroup of $\text{Isom}_+\mathbb{H}^3$ that acts freely and properly discontinuously on $\mathbb{H}^3$ with infinite covolume, then the first betti number $b_1(A)\geq 1$.
\end{lemma}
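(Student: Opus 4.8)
The plan is to identify $A$ with the fundamental group of the hyperbolic $3$-manifold $N=\mathbb{H}^3/A$ and to reduce the claim to a statement about the first Betti number of a compact $3$-manifold with boundary. Since $\mathbb{H}^3$ is contractible, $N$ is a connected orientable aspherical $3$-manifold (without boundary) with $\pi_1(N)\cong A$, and it is noncompact because $A$ has infinite covolume. As $A$ is finitely generated, Scott's compact core theorem provides a compact core $C\subset N$ whose inclusion is a homotopy equivalence; thus $C$ is a compact orientable (aspherical) $3$-manifold with $\pi_1(C)\cong A$, $H_*(C)\cong H_*(A)$, and $\partial C\neq\emptyset$, the last point because $N$ is connected and noncompact while $C$ is compact. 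Since $b_1$ depends only on the fundamental group, it suffices to show $b_1(C)\geq 1$.

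I would then split according to the topology of $\partial C$. If $\partial C$ has a component of positive genus, then by Poincaré--Lefschetz duality --- the ``half lives, half dies'' principle --- the image of $H_1(\partial C;\mathbb{Q})\to H_1(C;\mathbb{Q})$ has dimension $\tfrac12\dim_{\mathbb{Q}}H_1(\partial C;\mathbb{Q})$, which equals the sum of the genera of the components of $\partial C$ and hence is at least $1$; in particular $b_1(C)\geq 1$.

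It remains to treat the case in which every component of $\partial C$ is a $2$-sphere. Here I would cap off each spherical boundary component of $C$ by a $3$-ball to obtain a closed connected orientable $3$-manifold $\widehat{C}$ with $\pi_1(\widehat{C})\cong A$. By the prime decomposition and the Kneser--Milnor theorem, $A$ is a free product of copies of $\mathbb{Z}$ (coming from $S^1\times S^2$ summands) and fundamental groups of closed irreducible orientable $3$-manifolds. Each such irreducible summand has torsion-free fundamental group, being a subgroup of the torsion-free group $A$, hence, if nontrivial, it is infinite and the summand is aspherical, so its fundamental group $G$ satisfies $H_3(G;\mathbb{Z})\cong\mathbb{Z}$. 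On the other hand $H_3(A;\mathbb{Z})\cong H_3(N;\mathbb{Z})=0$ since $N$ is a noncompact $3$-manifold, and $H_3$ of a free product is the direct sum of the $H_3$'s of the factors; therefore no irreducible summand with nontrivial fundamental group occurs. Thus $A$ is a finitely generated free group, and since $A$ is nontrivial, $b_1(A)\geq 1$.

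The only step requiring genuine care is the spherical-boundary case; the rest is a routine combination of the compact core theorem and Poincaré--Lefschetz duality. An even quicker route for that case is to quote the standard fact that an irreducible orientable $3$-manifold with finitely generated nontrivial fundamental group --- which $N$ is, since $\pi_2(N)=0$ by asphericity and the sphere theorem forbids essential embedded spheres --- admits a compact core with no $2$-sphere boundary components, reducing everything to the ``half lives, half dies'' step.
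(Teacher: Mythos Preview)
Your argument is correct, and it reaches the same endpoint---the ``half lives, half dies'' inequality applied to a compact $3$-manifold with boundary of positive genus---but by a more elementary route than the paper. The paper invokes the tameness theorem of Agol and Calegari--Gabai to identify $\mathbb{H}^3/A$ with the interior of a compact manifold $X$; irreducibility then forces every boundary component of $X$ to have positive genus (a sphere boundary would make $X$ a ball), and the duality estimate finishes. You instead use Scott's compact core theorem, which is much lighter machinery, and then spend a paragraph disposing of the possibility of spherical boundary via prime decomposition and the vanishing of $H_3(A)$. Both approaches are valid; yours trades a deep structural theorem for a short extra argument, and your final remark---that irreducibility of $N$ already lets one choose a core with no sphere boundary---is exactly the shortcut that makes the two proofs converge.

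One small point of phrasing: the compact core inclusion is a priori only a $\pi_1$-isomorphism, not a homotopy equivalence, unless you also know $C$ is aspherical. This is harmless here since $b_1$ depends only on $\pi_1$, and in any case irreducibility of $N$ lets you arrange $C$ to be irreducible (hence aspherical, as $\pi_1(C)\cong A$ is infinite). But it is worth stating only what you need.
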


\begin{proof}
Since $A$ acts freely and properly discontinuously on $\mathbb{H}^3$, $Y=\mathbb{H}^3/A$ is an infinitely volume hyperbolic $3$-manifold with finitely generated fundamental group.

By the tameness of hyperbolic $3$-manifolds (\cite{Ag1},\cite{CG}), $Y$ is homeomorphic to the interior of a compact $3$-manifold $X$.

Since $Y=\mathbb{H}^3/A$ has infinite volume, $X$ can not be a closed $3$-manifold. Since $Y$ is a quotient of $\mathbb{H}^3$, it is irreducible, and so does $X$. So no component of $\partial X$ is a sphere. Otherwise $X$ can only be a $3$-ball, which contradicts with that $A$ is a nontrivial group.

So $X$ is a compact $3$-manifold with boundary and each component of $\partial X$ has positive genus. Then a canonical application of the duality theorem on $3$-manifolds (half lives, half dies) implies $$b_1(A)=b_1(X)\geq \frac{1}{2}b_1(\partial X)\geq 1.$$
\end{proof}

Although Lemma \ref{b1positive} is well-known, it is fundamental for this paper. It is a classical result that amalgamations along trivial subgroups (free products) of LERF groups are still LERF, but Theorem \ref{main} implies nontrivial geometrically finite amalgamations of finite volume hyperbolic $3$-manifold groups (which are LERF groups) are not LERF. The difference is mainly rooted in Lemma \ref{b1positive}, since we will use $b_1(A)\geq 1$ seriously in the proof of Theorem \ref{main}.

With this result, for a (fibered) cohomology class $\alpha\in H^1(M;\mathbb{Z})$, its restriction on $A$ might be a nontrivial homomorphism to $\mathbb{Z}$.

\subsection{Virtual retractions of hyperbolic $3$-manifold groups}

\begin{definition}
For a group $G$ and a subgroup $A<G$, we say that $A$ is a {\it virtual retraction} of $G$ if there exists a finite index subgroup $G'<G$ and a homomorphism $\phi:G'\to A$, such that $A<G'$ and $\phi|_A=id_A$.
\end{definition}

In \cite{CDW}, it is shown that (relatively) quasiconvex subgroups of virtually compact special (relative) hyperbolic groups are virtual retractions. The celebrated virtually compact special theorem of Wise and Agol (\cite{Wi} for cusped manifolds and \cite{Ag3} for closed manifolds) implies that groups of finite volume hyperbolic $3$-manifolds are virtually compact special. These two results together imply the following theorem.

\begin{theorem}\label{virtualretract}
Let $M$ be a finite volume hyperbolic $3$-manifold, $A<\pi_1(M)$ be a geometrically finite subgroup (i.e. $A$ is not a virtually fibered surface subgroup), then $A$ is a virtual retraction of $\pi_1(M)$.
\end{theorem}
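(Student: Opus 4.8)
The plan is to assemble two deep cited results: the Wise--Agol virtually compact special theorem for finite volume hyperbolic $3$-manifold groups, together with the virtual retraction property for (relatively) quasiconvex subgroups of virtually compact special (relatively) hyperbolic groups from \cite{CDW}. First I would dispose of the degenerate cases. If $A$ is the trivial subgroup it is obviously a virtual retraction, and if $A$ has finite index in $\pi_1(M)$ one simply takes $G'=A$ and $\phi=id_A$. So from now on $A$ may be assumed to be a nontrivial subgroup of infinite index which, by hypothesis, is not a virtually fibered surface subgroup.

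The next step is to record the two structural facts in a form that covers the closed and the cusped cases uniformly. If $M$ is closed, $\pi_1(M)$ is word-hyperbolic, and by Agol's theorem (\cite{Ag3}), which relies on Wise's machinery (\cite{Wi}), $\pi_1(M)$ is virtually compact special; if $M$ has cusps, $\pi_1(M)$ is hyperbolic relative to the rank-two abelian cusp subgroups and is virtually compact special by Wise (\cite{Wi}). On the subgroup side, the dichotomy recalled in Section~\ref{preliminary} tells us that, since $A$ is a nontrivial finitely generated infinite index subgroup that is not a virtually fibered surface subgroup, $A$ is a geometrically finite subgroup, hence quasiconvex in $\pi_1(M)$ when $M$ is closed and relatively quasiconvex with respect to the cusp structure when $M$ has cusps.

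Finally I would invoke \cite{CDW}: a (relatively) quasiconvex subgroup of a virtually compact special (relatively) hyperbolic group is a virtual retraction. Applied to $G=\pi_1(M)$ and the subgroup $A$, this produces a finite index subgroup $G'<\pi_1(M)$ with $A<G'$ together with a homomorphism $\phi:G'\to A$ satisfying $\phi|_A=id_A$, which is precisely the assertion. There is no genuine obstacle here, since the mathematical content lies entirely in the cited results; the only points deserving attention in the write-up are bookkeeping ones. One should check that the peripheral structure on $\pi_1(M)$ used in the relatively hyperbolic setting of \cite{CDW} is the cusp structure of $M$, so that the notion of relative quasiconvexity matches on the two sides and the intersections of $A$ with conjugates of cusp subgroups are controlled as required; and one should note that it is harmless that $\pi_1(M)$ is only virtually (rather than honestly) compact special, because the statement borrowed from \cite{CDW} is already phrased for virtually compact special groups.
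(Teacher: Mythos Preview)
Your proposal is correct and follows exactly the approach the paper takes: the paper does not give a standalone proof but simply records (in the paragraph preceding the theorem) that the result follows by combining the virtually compact special theorem of Wise and Agol (\cite{Wi} for cusped, \cite{Ag3} for closed) with the virtual retraction property for (relatively) quasiconvex subgroups from \cite{CDW}. Your write-up is in fact more careful than the paper's, handling the degenerate cases and separating closed from cusped explicitly, but the substance is identical.
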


\subsection{Arithmetic hyperbolic manifolds}

In this subsection, we briefly review the definition of arithmetic hyperbolic manifolds of simplest type. They are defined by quadratic forms over number fields. Since all arithmetic hyperbolic $4$-manifolds are in the simplest type, this definition is sufficient for this paper. Most material in this subsection can be found in Chapter 6 of \cite{VS}.

Recall that the hyperboloid model of $\mathbb{H}^n$ is defined as the following. Equip $\mathbb{R}^{n+1}$ with a bilinear form $B:\mathbb{R}^{n+1}\times \mathbb{R}^{n+1}\to \mathbb{R}$ defined by $$B\big((x_1,\cdots,x_n,x_{n+1}),(y_1,\cdots,y_n,y_{n+1})\big)=x_1y_1+\cdots+x_ny_n-x_{n+1}y_{n+1}.$$ Then the hyperbolic space $\mathbb{H}^n$ can be identified with $$I^n=\{\vec{x}=(x_1,\cdots,x_n,x_{n+1})\ |\ B(\vec{x},\vec{x})=-1,x_{n+1}>0\}.$$ The hyperbolic metric is induced by the restriction of $B(\cdot,\cdot)$ on the tangent space of $I^n$.

The isometry group of $\mathbb{H}^n$ consists of all linear transformations of $\mathbb{R}^{n+1}$ that preserve $B(\cdot,\cdot)$ and fix $I^n$. Let $J=\text{diag}(1,\cdots,1,-1)$ be the $(n+1)\times (n+1)$ matrix defining the bilinear form $B(\cdot,\cdot)$, then the isometry group of $\mathbb{H}^n$ is $$\text{Isom}(\mathbb{H})\cong PO(n,1;\mathbb{R})=\{X\in GL(n+1,\mathbb{R})\ |\ X^tJX=J\}/(X\sim -X).$$
The orientation preserving isometry group of $\mathbb{H}^n$ is
$$\text{Isom}_+(\mathbb{H}^n)\cong SO_0(n,1;\mathbb{R}),$$ which is the component of $$SO(n,1;\mathbb{R})=\{X\in SL(n+1,\mathbb{R})\ |\ X^tJX=J\}$$ that contains the identity matrix.

Let $K\subset \mathbb{R}$ be a totally real number field, and $\sigma_1=id,\sigma_2,\cdots,\sigma_k$ be all the embeddings of $K$ into $\mathbb{R}$. Let $$f(x)=\sum_{i,j=1}^{n+1}a_{ij}x_ix_j,\ a_{ij}=a_{ji}\in K$$ be a nondegenerate quadratic form defined over $K$ with negative inertia index $1$ (as a quadratic form over $\mathbb{R}$). If for any $l>1$, the quadratic form $$f^{\sigma_l}(x)=\sum_{i,j=1}^{n+1}\sigma_l(a_{ij})x_ix_j$$ is positive definite, then we can use $K$ and $f$ to define an arithmetic hyperbolic group.

Let $A$ be the $(n+1)\times (n+1)$ matrix defining the quadratic form $f$. Since the negative inertia index of $A$ is $1$, the {\it special orthogonal group of $f$}: $$SO(f;\mathbb{R})=\{X\in SL(n+1,\mathbb{R})\ |\ X^tAX=A\}$$ is conjugate to $SO(n,1;\mathbb{R})$ by a matrix $P$ (satisfying $P^tAP=J$). $SO(f;\mathbb{R})$ has two components, and let $SO_0(f;\mathbb{R})$ be the component that contains the identity matrix.

Let $\mathcal{O}_K$ be the ring of algebraic integers in the field $K$. Then we can form the set of algebraic integer points $$SO(f;\mathcal{O}_K)=\{X\in SL(n+1,\mathcal{O}_K)\ |\ X^tAX=A\}\subset SO(f;\mathbb{R}).$$
The theory of arithmetic groups implies that $$SO_0(f;\mathcal{O}_K)=SO(f;\mathcal{O}_K)\cap SO_0(f;\mathbb{R})$$ conjugates to a lattice of $\text{Isom}_+(\mathbb{H}^n)$ (by the matrix $P$), i.e. the corresponding quotient space of $\mathbb{H}^n$ has finite volume. For simplicity, we still use $SO_0(f;\mathcal{O}_K)$ to denote its $P$-conjugation in $SO_0(n,1;\mathbb{R})\cong\text{Isom}_+(\mathbb{H}^n)$.

Here $SO_0(f;\mathcal{O}_K)\subset \text{Isom}_+(\mathbb{H}^n)$ is called {\it the arithmetic group} defined by the number field $K$ and quadratic form $f$, and $\mathbb{H}^n/SO_0(f;\mathcal{O}_K)$ is a finite volume hyperbolic arithmetic $n$-orbifold. A hyperbolic $n$-manifold (orbifold) $M$ is called {\it an arithmetic hyperbolic $n$-manifold (orbifold) of simplest type} if $M$ is commensurable with $\mathbb{H}^n/SO_0(f;\mathcal{O}_K)$ for some $K$ and $f$.

For this paper, the most important property of arithmetic hyperbolic manifolds of simplest type is that they contain many finite volume hyperbolic $3$-manifolds as totally geodesic submanifolds. This can be seen by diagonalizing the matrix $A$ and taking indefinite $4\times 4$ submatrices.

\section{Algebraically fibered structures on $\pi_1(M_1)*_A\pi_1(M_2)$}\label{algebraicfibering}

In this section, we construct "algebraically fibered" structures on certain subgroups of geometrically finite amalgamations of finite volume hyperbolic $3$-manifold groups. The construction of an algebraically fibered structure on certain subgroup is the first step for building an ideal model of geometrically finite amalgamations for which we can construct nonseparable subgroups.

\begin{definition}
For a group $G$, by an {\it algebraically fibered} structure on $G$, we mean a nontrivial homomorphism $G\to \mathbb{Z}$ with finitely generated kernel.
\end{definition}

The main result in this section is the following theorem, and we will prove it in subsection \ref{proof}. In subsection \ref{notsorelated}, we will prove a related result on virtually fibered boundary slopes of cusped hyperbolic $3$-manifolds, which is quite interesting by itself.

\begin{theorem}\label{matchingfibering}
Let $M_1,M_2$ be two finite volume hyperbolic $3$-manifolds, $A$ be a nontrivial finitely generated group, $i_1:A\to \pi_1(M_1)$ and $i_2:A\to \pi_1(M_2)$ be two injective homomorphisms with geometrically finite images. Then there exist finite covers $N_1$, $N_2$ of $M_1$, $M_2$ respectively, such that the following holds.
\begin{enumerate}
  \item $i_1^{-1}(\pi_1(N_1))$ and $i_2^{-1}(\pi_1(N_2))$ are the same subgroup $A'\leq A$ (and we identify $A'$ with their images in $\pi_1(N_1)$ and $\pi_1(N_2)$).
  \item There exist fibered classes $\alpha_1\in H^1(N_1;\mathbb{Z})$ and $\alpha_2\in H^1(N_2;\mathbb{Z})$, such that $\alpha_1|_{A'}=\alpha_2|_{A'}$ as homomorphisms from $A'$ to $\mathbb{Z}$, and the restricted homomorphisms are surjective.
\end{enumerate}
Moreover, the group $\pi_1(M_1)*_{A}\pi_1(M_2)$ contains a subgroup isomorphic to $\pi_1(N_1)*_{A'}\pi_1(N_2)$. The subgroup $\pi_1(N_1)*_{A'}\pi_1(N_2)$ is still a nontrivial geometrically finite amalgamation of finite volume hyperbolic $3$-manifold groups, and it has an algebraically fibered structure.
\end{theorem}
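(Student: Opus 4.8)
The plan is to combine three ingredients: the virtual retract property of geometrically finite subgroups (Theorem~\ref{virtualretract}), Agol's virtual fibering machinery in the uniform form of Theorem~\ref{quasifiber} together with the perturbation Lemma~\ref{perturbation}, and a standard Bass--Serre argument about subgroups of amalgams. First I would apply Theorem~\ref{virtualretract} to pass to finite covers $L_1\to M_1$ and $L_2\to M_2$ such that $i_j(A)<\pi_1(L_j)$ and there are retractions $r_j\colon\pi_1(L_j)\to i_j(A)$; in particular each restriction map $H^1(L_j;\mathbb{R})\to H^1(i_j(A);\mathbb{R})$ is surjective, with section $r_j^*$. Since $i_1(A)$ is a nontrivial, finitely generated, infinite covolume discrete torsion-free subgroup of $\pi_1(M_1)<\mathrm{Isom}_+\mathbb{H}^3$ (it is geometrically finite, hence of infinite index in the lattice $\pi_1(M_1)$), Lemma~\ref{b1positive} gives $b_1(i_1(A))\geq 1$; so I may fix a primitive class on $A$ and transport it through $i_1$ and $i_2$ to primitive classes $u_j\in H^1(i_j(A);\mathbb{Z})$ corresponding to one another under $i_1(A)\cong A\cong i_2(A)$, and set $\beta_j:=r_j^*(u_j)\in H^1(L_j;\mathbb{Z})$, so that $\beta_j|_{i_j(A)}=u_j$.

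Next I would promote $\beta_1,\beta_2$ to fibered classes on finite covers of the $L_j$ while keeping their restrictions to the surviving part of $A$ both controlled and matched across the two factors. Applying Agol's construction (through Theorem~\ref{quasifiber} and \cite{Ag2}) I pass to finite covers $N_j\to L_j$ on which the pullback of $\beta_j$ becomes quasi-fibered, arranging by a suitable choice of the covers that $i_j(A)$ -- hence its surviving finite-index subgroup $i_j(A)\cap\pi_1(N_j)$ -- remains a retract, so that $\rho_j\colon H^1(N_j;\mathbb{R})\to H^1(i_j(A)\cap\pi_1(N_j);\mathbb{R})$ stays surjective. A quasi-fibered class lies on the closure of a full-dimensional fibered cone, and one then perturbs the pullback of $\beta_j$ by a class vanishing on $i_j(A)\cap\pi_1(N_j)$ into the interior of a fibered cone -- this is where the retract property, Lemma~\ref{perturbation}, and the simultaneity built into Theorem~\ref{quasifiber} are used -- obtaining a fibered class $\alpha_j\in H^1(N_j;\mathbb{Z})$ with $\alpha_j|_{i_j(A)\cap\pi_1(N_j)}=u_j|_{i_j(A)\cap\pi_1(N_j)}$. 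Writing $B_j:=i_j^{-1}(\pi_1(N_j))\leq A$ and $A':=B_1\cap B_2$, and using that $\pi_1(M_j)$ is LERF (so $i_j(A')$, being finitely generated, is separable in $\pi_1(M_j)$, and since it has finite index in $i_j(A)$ one can separate out the finitely many cosets of $i_j(A')$ inside $i_j(A)$), I replace $N_j$ by a further finite cover with $i_j^{-1}(\pi_1(N_j))=A'$ for both $j$, giving conclusion (1). Pulling back the $\alpha_j$ gives fibered classes whose restrictions to $A'\cong i_j(A')$ now agree (both equal the chosen class of $A$ restricted to $A'$); a final adjustment, using $b_1(A')\geq 1$ from Lemma~\ref{b1positive} and the fact that membership in a fibered cone is open, makes this common restriction surjective onto $\mathbb{Z}$, which is conclusion (2).

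For the ``Moreover'' part: since $\pi_1(N_j)\cap i_j(A)=i_j(A')$ for $j=1,2$ and these coincide as subgroups of the amalgamated copy of $A$, a standard Bass--Serre argument shows that the subgroup of $\pi_1(M_1)*_A\pi_1(M_2)$ generated by $\pi_1(N_1)$ and $\pi_1(N_2)$ is isomorphic to $\pi_1(N_1)*_{A'}\pi_1(N_2)$. Here $N_1,N_2$ are finite covers of $M_1,M_2$, hence finite volume hyperbolic $3$-manifolds; $A'$ is nontrivial, being finite index in the infinite group $A$; and $i_j(A')$ is geometrically finite in $\pi_1(N_j)$, being finite index in $i_j(A)\cap\pi_1(N_j)$, which is not a virtually fibered surface subgroup because $i_j(A)$ is not. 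So $\pi_1(N_1)*_{A'}\pi_1(N_2)$ is again a nontrivial geometrically finite amalgamation of finite volume hyperbolic $3$-manifold groups. Finally, $\alpha_1$ and $\alpha_2$ agree on $A'$, so they assemble into a surjective homomorphism $\phi\colon\pi_1(N_1)*_{A'}\pi_1(N_2)\to\mathbb{Z}$; since $\phi|_{A'}$ is onto $\mathbb{Z}$, the action of $\ker\phi$ on the Bass--Serre tree of the amalgam has quotient a single edge, so $\ker\phi\cong\ker\alpha_1*_{\ker\alpha_1\cap A'}\ker\alpha_2$. As $\alpha_1$ and $\alpha_2$ are fibered, $\ker\alpha_1$ and $\ker\alpha_2$ are fiber surface subgroups, in particular finitely generated; an amalgam of two finitely generated groups is finitely generated regardless of the edge group, so $\ker\phi$ is finitely generated, that is, $\phi$ is an algebraically fibered structure on $\pi_1(N_1)*_{A'}\pi_1(N_2)$.

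The main obstacle I anticipate is precisely the middle step: promoting quasi-fibered classes to fibered ones without disturbing their restriction to the surviving part of $A$, and matching the two independently built classes along $A'$. Agol's machinery naturally outputs classes on the \emph{boundary} of fibered cones, and a careless perturbation into a fibered cone destroys the restriction to $A$; one is forced to perturb ``relative to $A$'', which means carrying the retract property and the uniform quasi-fibering of Theorem~\ref{quasifiber} through the entire RFRS tower used in Agol's construction, and controlling how the kernel of the restriction map sits relative to the Thurston norm cones -- one must ensure this kernel meets the interior of a fibered cone after translation to the quasi-fibered class. Arranging that the two restrictions match \emph{and} are simultaneously surjective onto $\mathbb{Z}$ is the delicate quantitative point hidden inside ``a final adjustment'' above, and is the step I would expect to require the most care.
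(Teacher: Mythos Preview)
Your overall architecture is close to the paper's, but the step you yourself flag as ``the main obstacle'' is a genuine gap, not just a delicate point. You want to perturb each quasi-fibered pullback of $\beta_j$ into the interior of a fibered cone \emph{by a class vanishing on $i_j(A)\cap\pi_1(N_j)$}. Lemma~\ref{perturbation} does not help here: it requires the base class to be \emph{fibered}, not merely quasi-fibered. Geometrically, the quasi-fibered class sits on a boundary face of a fibered cone, and there is no reason the affine subspace $\beta_j+\ker(\rho_j)$ must meet the interior of that cone; if $\ker(\rho_j)$ happens to lie in the linear span of that face, it never will. Nothing in the retract property or in Theorem~\ref{quasifiber} rules this out, so your symmetric ``perturb each side relative to $A$, then intersect'' scheme does not go through as written.

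The paper avoids this obstacle by working \emph{asymmetrically}. First it perturbs the quasi-fibered class on the $M_1$ side \emph{freely} (no constraint on the restriction) to a genuine fibered class $\beta_1$; this changes the restriction to $A''$, but one only needs it to remain nontrivial. Then it feeds $\beta_1|_{A''}$ as the target homomorphism into Proposition~\ref{extendtoquasifiber} on the $M_2$ side, producing a quasi-fibered $\beta_2$ with $\beta_2|_{A'}=\beta_1|_{A'}$. Now the crucial step: to push $\beta_2$ into a fibered cone one writes $\alpha_2=n\beta_2+\delta_2$ with $\delta_2$ arbitrary; the restriction to $A'$ is disturbed by $\delta_2|_{A'}$, but this is corrected \emph{on the other side} using the retract $\phi\colon\pi_1(N_1')\to A'$: set $\delta_1=(\delta_2|_{A'})\circ\phi$ and $\alpha_1=n\,p_1^*(\beta_1)+\delta_1$. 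Since $p_1^*(\beta_1)$ is already fibered, Lemma~\ref{perturbation} now legitimately applies and $\alpha_1$ is fibered for large $n$; and by construction $\alpha_1|_{A'}=\alpha_2|_{A'}$. In short, the retract is used on only one side, and its role is to absorb the perturbation forced on the other side, not to constrain a perturbation on its own side. Surjectivity is then arranged by a final cyclic cover. If you reorganize your argument along these asymmetric lines, the rest of your outline (the Bass--Serre identification of the subgroup, the description of $\ker\phi$, and its finite generation) is correct and matches the paper.
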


\subsection{Construct algebraically fibered structures on subgroups}\label{proof}

In this subsection, we give the proof of Theorem \ref{matchingfibering}. Before proving Theorem \ref{matchingfibering}, we start with the following proposition.

\begin{proposition}\label{extendtoquasifiber}
Let $M$ be a finite volume hyperbolic $3$-manifold, and $A<\pi_1(M)$ be a nontrivial geometrically finite subgroup. Then for any nontrivial homomorphism $\gamma: A\to \mathbb{Z}$, there exist a finite cover $M'$ of $M$ and a quasi-fibered class $\beta\in H^1(M';\mathbb{Z})$, such that the following hold.

For the subgroup $A'=A\cap \pi_1(M')$, $\beta|_{A'}=\gamma|_{A'}$ holds as homomorphisms from $A'$ to $\mathbb{Z}$.
\end{proposition}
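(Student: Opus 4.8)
The plan is to first extend the homomorphism $\gamma : A \to \mathbb{Z}$ to a homomorphism defined on a finite index subgroup of $\pi_1(M)$, then massage the resulting cohomology class into a quasi-fibered one using Agol's criterion, all while keeping its restriction to $A$ under control. The starting point is Theorem \ref{virtualretract}: since $A$ is geometrically finite, it is a virtual retraction of $\pi_1(M)$, so there is a finite index subgroup $G < \pi_1(M)$ with $A < G$ and a retraction $\phi : G \to A$. Composing, $\gamma \circ \phi : G \to \mathbb{Z}$ is a homomorphism whose restriction to $A$ is exactly $\gamma$. Letting $M_0 \to M$ be the finite cover with $\pi_1(M_0) = G$, this homomorphism is a class $\gamma_0 \in H^1(M_0;\mathbb{Z})$ with $\gamma_0|_A = \gamma$. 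Note $\gamma_0$ is nontrivial because $\gamma$ is nontrivial and $A < G$; here is where Lemma \ref{b1positive} is implicitly reassuring, since it guarantees $\gamma$ (hence $\gamma_0$) can indeed be nonzero.

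Next I would apply Agol's result (as packaged in Theorem \ref{quasifiber}) to $M_0$: there is a finite regular cover $p : M' \to M_0$ such that the pullback of \emph{every} nontrivial integral class on $M_0$ is quasi-fibered. In particular $\beta := p^*(\gamma_0) \in H^1(M';\mathbb{Z})$ is a quasi-fibered class. Now set $A' = A \cap \pi_1(M')$; since $p$ is a cover, $\pi_1(M')$ is identified with a finite index subgroup of $G$, and $A'$ is a finite index subgroup of $A$. The restriction $\beta|_{A'}$ is, by naturality of pullback, just the restriction of $\gamma_0$ to $A' < A < G$, which is $\gamma|_{A'}$. This gives exactly the conclusion: $M'$ is a finite cover of $M$ (compose $p$ with $M_0 \to M$), $\beta$ is quasi-fibered, and $\beta|_{A'} = \gamma|_{A'}$.

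One technical point worth spelling out: Theorem \ref{quasifiber} requires $M_0$ to have virtually RFRS fundamental group, which holds since $M_0$ is a finite cover of a finite volume hyperbolic $3$-manifold and these have virtually RFRS fundamental groups by \cite{Ag3,Wi}. Also, strictly speaking I should confirm that $M_0$ can be taken to be a genuine hyperbolic $3$-manifold (compact with tori boundary, irreducible), which is automatic as a finite cover of $M$; and that the Thurston-norm/fibered-cone machinery of the previous subsection applies, which it does.

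The main obstacle I anticipate is not any single step but the bookkeeping of the two successive covers and the nested subgroups $A' < A < G$: one must be careful that "restriction to $A'$" means the same thing after identifying $\pi_1(M')$ with a subgroup of $\pi_1(M)$, and that the retraction $\phi$ genuinely survives (its restriction to $A'$ is still the identity, trivially, since $\phi|_A = \mathrm{id}_A$). A subtler concern is whether one needs $\gamma|_{A'}$ to remain surjective or merely nontrivial — the proposition only asks for equality of homomorphisms, so this is fine here, but it foreshadows the extra work in Theorem \ref{matchingfibering} where Lemma \ref{perturbation} will be needed to promote quasi-fibered classes to honest fibered classes while simultaneously matching up the two sides of the amalgamation. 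For the proposition itself, though, the argument is essentially "virtual retract, then Agol," and the only real care is in tracking the subgroups through the tower of covers.
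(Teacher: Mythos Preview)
Your proof is correct and follows essentially the same route as the paper: use the virtual retract property (Theorem \ref{virtualretract}) to extend $\gamma$ to a class on a finite cover, then apply Theorem \ref{quasifiber} to pass to a further cover where the pullback is quasi-fibered, and finally check that the restriction to $A'$ is unchanged. The paper's argument is identical up to notation (it writes $M''$, $\delta$ where you write $M_0$, $\gamma_0$), and your extra remarks on RFRS and subgroup bookkeeping are accurate but not needed for the proposition itself.
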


\begin{proof}
Since $A<\pi_1(M)$ is a geometrically finite subgroup, by Theorem \ref{virtualretract}, there exists a finite cover $M''$ of $M$, such that $A<\pi_1(M'')$ and there is a retract homomorphism $\phi: \pi_1(M'')\to A$.

By taking the composition, we get a nontrivial homomorphism $\delta=\gamma\circ \phi: \pi_1(M'')\to \mathbb{Z}$, which gives a cohomology class $\delta\in H^1(M'';\mathbb{Z})$.

Then by Theorem \ref{quasifiber}, there exists a finite cover $p:M'\to M''$, such that $\beta=p^*(\delta)\in H^1(M';\mathbb{Z})$ is a quasi-fibered class.

For $A'=A\cap \pi_1(M')$, it is easy to see that $$\beta|_{A'}=p^*(\delta)|_{A'}=\delta\circ p_*|_{A'}=\gamma\circ \phi \circ p_*|_{A'}=\gamma\circ \phi|_{A'}=\gamma|_{A'}.$$

\end{proof}

Now we are ready to prove Theorem \ref{matchingfibering}. Actually, what we will prove is stronger than the statement of Theorem \ref{matchingfibering}. We can start with any nontrivial homomorphism $\gamma:A\to \mathbb{Z}$, then the restriction of the algebraically fibered structure on $A'$ can be arbitrarily close to the restriction of $\gamma$ on $A'$ (as homomorphisms to $\mathbb{Z}$).

\begin{proof}

By abusing notation, we still use $A$ to denote its images in $\pi_1(M_1)$ and $\pi_1(M_2)$. Lemma \ref{b1positive} implies that there exists a nontrivial homomorphism $\gamma:A\to \mathbb{Z}$.

By Proposition \ref{extendtoquasifiber}, there exist a finite cover $M_1'\to M_1$ and a quasi-fibered class $\beta_1'\in H^1(M_1';\mathbb{Z})$, such that for $A''=A\cap \pi_1(M_1')$, $\beta_1'|_{A''}=\gamma|_{A''}$ holds.

Since $\beta_1'$ is a quasi-fibered class of $M_1'$, there exists a fibered class $\beta_1\in H^1(M_1';\mathbb{Z})$ which is (arbitrarily) close to $\beta_1'$ in $PH^1(M_1';\mathbb{Q})$ (under the projectivization). Moreover, we can assume that $\beta_1|_{A''}:A''\to\mathbb{Z}$ is a nontrivial homomorphism.

Since $\pi_1(M_2)$ is LERF, $A''<\pi_1(M_2)$ is separable. So there exists a finite cover $M_2'$ of $M_2$ such that $A\cap\pi_1(M_2')=A''$. Now we apply Proposition \ref{extendtoquasifiber} again to $A''<\pi_1(M_2')$ and $\beta_1|_{A''}:A''\to \mathbb{Z}$. Then there exist a finite cover $N_2'$ of $M_2'$ and a quasi-fibered class $\beta_2\in H^1(N_2';\mathbb{Z})$, such that for $A'=A''\cap \pi_1(N_2')$, $\beta_2|_{A'}=\beta_1|_{A'}$ holds. Moreover, since $A'<A''$ is a finite index subgroup, $\beta_1|_{A'}$ is a nontrivial homomorphism from $A'$ to $\mathbb{Z}$.

Since $\pi_1(M_1')$ is LERF, $A'<\pi_1(M_1')$ is separable. So there exists a finite cover $N_1'$ of $M_1'$ such that $A''\cap \pi_1(N_1')=A'$. Moreover, since $A'<\pi_1(N_1')$ is a geometrically finite subgroup, it is a virtual retraction of a finite index subgroup of $\pi_1(N_1')$. By abusing notation, we denote the corresponding finite cover (such that $A'$ is a retraction of $\pi_1(N_1')$) by $p_1:N_1'\to M_1'$.

As a summary, we are in the following situation now. We have finite covers $N_1'$ and $N_2'$ of $M_1$ and $M_2$ respectively, with $A'=A\cap \pi_1(N_1')=A\cap \pi_1(N_2')$ and $A'$ is a retraction of $\pi_1(N_1')$ via a retract homomorphism $\phi:\pi_1(N_1')\to A'$. Moreover, we have a fibered class $p_1^*(\beta_1)\in H^1(N_1';\mathbb{Z})$ and a quasi-fibered class $\beta_2\in H^1(N_2';\mathbb{Z})$, such that $p_1^*(\beta_1)|_{A'}=\beta_2|_{A'}$ are nontrivial homomorphisms to $\mathbb{Z}$. Now we need to modify $\beta_2$ to a fibered class.

Since $\beta_2\in H^1(N_2';\mathbb{Z})$ is a quasi-fibered class, there exists $\delta_2 \in H^1(N_2';\mathbb{Z})$ and $N\in \mathbb{Z}^+$, such that for any $n>N$, $\alpha_2=n\beta_2+\delta_2\in H^1(N_2';\mathbb{Z})$ is a fibered class. For the restricted homomorphism $\delta_2|_{A'}:A'\to \mathbb{Z}$, $\delta_1=\delta_2|_{A'}\circ \phi:\pi_1(N_1')\to \mathbb{Z}$ is a cohomology class in $H^1(N_1';\mathbb{Z})$. Since $p_1^*(\beta_1)$ is a fibered class of $N_1'$, by Lemma \ref{perturbation}, for $n$ large enough, $\alpha_1=np_1^*(\beta_1)+\delta_1$ is also a fibered class of $N_1'$, and its restriction on $A'$ is a nontrivial homomorphism to $\mathbb{Z}$.

So we have $$\alpha_1|_{A'}=np_1^*(\beta_1)|_{A'}+\delta_1|_{A'}=n\beta_2|_{A'}+\delta_2|_{A'}\circ \phi|_{A'}=n\beta_2|_{A'}+\delta_2|_{A'}=\alpha_2|_{A'}.$$ If $\alpha_1|_{A'}=\alpha_2|_{A'}$ are surjective homomorphisms to $\mathbb{Z}$, then we just take $N_1=N_1'$ and $N_2=N_2'$.

If $\alpha_1|_{A'}=\alpha_2|_{A'}$ is not surjective, since it is nontrivial, the image is $d\mathbb{Z}<\mathbb{Z}$ for some $d\geq 2$. Then we take cyclic covers $q_1:N_1\to N_1'$ and $q_2:N_2\to N_2'$ corresponding to kernels of $\pi_1(N_1')\xrightarrow{\alpha_1}\mathbb{Z}\to \mathbb{Z}/d\mathbb{Z}$ and $\pi_1(N_2')\xrightarrow{\alpha_2}\mathbb{Z}\to \mathbb{Z}/d\mathbb{Z}$ respectively. Then we have $A'<\pi_1(N_1)$ and $A'<\pi_1(N_2)$, while $q_1^*(\alpha_1)$ and $q_2^*(\alpha_2)$ are surjective homomorphisms from $\pi_1(N_1)$ and $\pi_1(N_2)$ to $d\mathbb{Z}$ respectively. So $\frac{1}{d}q_1^*(\alpha_1)\in H^1(N_1;\mathbb{Z})$ and $\frac{1}{d}q_2^*(\alpha_2)\in H^1(N_2;\mathbb{Z})$ are both primitive cohomology classes, and they have the same restriction on $A'$. By abusing notation, we use $\alpha_1$ and $\alpha_2$ to denote $\frac{1}{d}q_1^*(\alpha_1)$ and $\frac{1}{d}q_2^*(\alpha_2)$. Then $N_1$ and $N_2$ are desired finite covers of $M_1$ and $M_2$, with primitive fibered classes $\alpha_1\in H^1(N_1;\mathbb{Z})$ and $\alpha_2\in H^1(N_2;\mathbb{Z})$, such that $\alpha_1|_{A'}=\alpha_2|_{A'}$ are surjective homomorphisms to $\mathbb{Z}$.

Since $\pi_1(N_1)\cap A=\pi_1(N_2)\cap A=A'$, $\pi_1(N_1)*_{A'}\pi_1(N_2)$ is a nontrivial geometrically finite amalgamation of $\pi_1(N_1)$ and $\pi_1(N_2)$. There is an obvious homomorphism from $\pi_1(N_1)*_{A'}\pi_1(N_2)$ to $\pi_1(M_1)*_A\pi_1(M_1)$, and it is injective by the canonical form of elements in an amalgamation product.

Since $\alpha_1:\pi_1(N_1)\to \mathbb{Z}$ and $\alpha_2:\pi_1(N_2)\to \mathbb{Z}$ agree with each other on $A'=\pi_1(N_1)\cap \pi_1(N_2)<\pi_1(N_1)*_{A'}\pi_1(N_2)$, they induce a homomorphism $\alpha: \pi_1(N_1)*_{A'}\pi_1(N_2)\to \mathbb{Z}$.

Let $\Sigma_1$ and $\Sigma_2$ be connected fibered surfaces of $N_1$ and $N_2$ corresponding to primitive fibered classes $\alpha_1$ and $\alpha_2$ respectively, and let $K$ be the kernel of the surjective homomorphism $\alpha_1|_{A'}=\alpha_2|_{A'}:A'\to \mathbb{Z}$. Since $\alpha_1|_{A'}=\alpha_2|_{A'}:A'\to \mathbb{Z}$ is surjective, it is easy to see that the kernel of $\alpha: \pi_1(N_1)*_{A'}\pi_1(N_2)\to \mathbb{Z}$ is $\pi_1(\Sigma_1)*_{K}\pi_1(\Sigma_2)$.

Since $\Sigma_1$ and $\Sigma_2$ are compact surfaces, and the kernel $\pi_1(\Sigma_1)*_{K}\pi_1(\Sigma_2)$ is generated by $\pi_1(\Sigma_1)$ and $\pi_1(\Sigma_2)$, $\pi_1(\Sigma_1)*_{K}\pi_1(\Sigma_2)$ is finitely generated. So $\pi_1(N_1)*_{A'}\pi_1(N_2)$ is algebraically fibered.
\end{proof}

\begin{remark}
For a general geometrically finite subgroup $A'$ of a finite volume hyperbolic $3$-manifold group, the kernel $K$ may not be finitely generated. So $\pi_1(\Sigma_1)*_{K}\pi_1(\Sigma_2)$ may not be finitely presented.
\end{remark}

\subsection{Virtually fibered boundary slopes on cusps}\label{notsorelated}

In this subsection, we study virtually fibered boundary slopes on boundary components of cusped hyperbolic $3$-manifolds. It is not directly related with the proof of Theorem \ref{main}, but it naturally shows up in the study of matching fibered structures of $3$-manifolds along $\mathbb{Z}^2$ subgroups. Although the $\mathbb{Z}^2$-amalgamation case is dealt in \cite{Sun} by using the result in \cite{PW}, the following results are quite interesting by themselves.

\begin{definition}
Let $M$ be a cusped hyperbolic $3$-manifold, $T$ be a boundary component of $M$, and $a$ be a slope on $T$. We say that $a$ is a {\it virtually fibered boundary slope} if there exist a finite cover $M'$ of $M$, an elevation $T'$ of $T$ in $M'$, and a fibered structure of $M'$, such that the corresponding fibered surface in $M'$ intersects with $T'$ along (parallal copies of) an elevation of $a$ in $T'$.
\end{definition}

For the $A\cong \mathbb{Z}^2$ case of Theorem \ref{matchingfibering}, we basically just find virtually fibered boundary slopes on $M_1$ and $M_2$ such that they match with each other under the pasting. Moreover, the proof of Theorem \ref{matchingfibering} implies that, for any torus boundary component $T$ of a cusped hyperbolic $3$-manifold, the set of virtually fibered boundary slopes form an open dense subset of $PH^1(T;\mathbb{Q})$. Moreover, in the following proposition, we prove there are only finitely many slopes on $T$ that may not be virtually fibered boundary slopes.

\begin{proposition}\label{fiberedboundaryslope}
Let $M$ be a cusped finite volume hyperbolic $3$-manifold, and $T$ be a boundary component of $M$, then all but finitely many slopes on $T$ are virtually fibered boundary slopes.
\end{proposition}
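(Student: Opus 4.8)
The plan is to reduce the problem to the case of a fibered $3$-manifold and then control the boundary behaviour of fibers in a fibered cone. First I would invoke the fact that $\pi_1(M)$ is virtually RFRS (from Agol--Wise), so by Theorem \ref{quasifiber} there is a finite regular cover $p:M'\to M$ in which every nontrivial class in the image of $H^1(M;\mathbb{Z})$ pulls back to a quasi-fibered class; more usefully, I would aim to pass to a finite cover $M'$ which is itself fibered with $b_1(M')\geq 2$ (this follows from virtual fibering plus the observation that one can always increase $b_1$ by a further abelian cover, or directly from the RFRS machinery). Fix a boundary torus $T$ and an elevation $T'\subset \partial M'$; the covering map restricts to a finite cover $T'\to T$, so it induces an injection $H^1(T;\mathbb{Q})\hookrightarrow H^1(T';\mathbb{Q})$ and hence an embedding of the projective line $PH^1(T;\mathbb{Q})\hookrightarrow PH^1(T';\mathbb{Q})$, under which a slope $a$ on $T$ maps to the slope of an elevation $a'$ on $T'$. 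Thus it suffices to show that all but finitely many slopes on $T'$ are realized as the boundary slope of some (genuine, not merely virtual) fibered surface in $M'$, since a genuine fibered surface in $M'$ restricting to a slope $a'$ which is an elevation of $a$ is exactly the data required by the definition of a virtually fibered boundary slope for $a$ in $M$.

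Next I would use the restriction map $r:H^1(M';\mathbb{R})\to H^1(T';\mathbb{R})$ together with the structure of the Thurston norm ball of $M'$. Let $\mathcal{F}$ be the union of the (finitely many) open fibered cones of $M'$; by hypothesis $\mathcal{F}\neq \varnothing$, and $\mathcal{F}$ is an open cone which is dense in the subset of $H^1(M';\mathbb{R})$ spanned by fibered faces. The key point is that for each fibered cone $C$, the image $r(C)\subset H^1(T';\mathbb{R})$ is an open cone, and by "half lives half dies" applied to $M'$ the restriction map $r$ is surjective onto a half-dimensional subspace $L_{T'}\subset H^1(T';\mathbb{R})$ that projects onto the full line $PH^1(T';\mathbb{Q})$ --- indeed a nonseparating fibered surface meets every boundary component nontrivially, so $r$ restricted to a single fibered cone already has full-dimensional image in $L_{T'}$. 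Hence $r(\mathcal{F})$ is an open subset of $L_{T'}$ whose projectivization is an open subset $U$ of $PH^1(T';\mathbb{Q})$. Now $PH^1(T';\mathbb{Q})$ is $1$-dimensional, so its complement $PH^1(T';\mathbb{Q})\setminus U$ is a closed subset of a $1$-manifold disjoint from the (dense) open set $U$; the content of the proposition is that this complement is finite. For this I would argue that the complement consists only of directions lying on the boundary $\partial r(\mathcal{F})$, and that $\partial r(\mathcal{F})$ is contained in the image of finitely many rational hyperplanes (the images under $r$ of the finitely many codimension-one faces of the Thurston norm ball of $M'$); each such hyperplane either meets $L_{T'}$ in a proper subspace (contributing at most one projective direction) or contains $L_{T'}$ entirely, and the latter is ruled out because $r$ maps the fibered cone onto a full-dimensional cone in $L_{T'}$. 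Therefore $PH^1(T';\mathbb{Q})\setminus U$ is finite, hence so is the set of slopes on $T$ that are not virtually fibered boundary slopes, after intersecting with the image of $PH^1(T;\mathbb{Q})$.

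Finally, I would assemble the pieces: given a slope $a$ on $T$ not in the exceptional finite set, its elevation direction $a'\in PH^1(T';\mathbb{Q})$ lies in $U=P(r(\mathcal{F}))$, so there is a rational class $\omega\in \mathcal{F}\subset H^1(M';\mathbb{Q})$ with $r(\omega)$ a positive rational multiple of the Poincaré dual of $a'$ on $T'$; clearing denominators gives an integral fibered class whose fibered surface $\Sigma$ meets $T'$ in parallel copies of (an elevation of) $a$, as required. The main obstacle I anticipate is the bookkeeping in the middle step --- namely verifying that the restriction map $r$ sends a single fibered cone of $M'$ onto a cone of full dimension inside the half-dimensional subspace $L_{T'}=r(H^1(M';\mathbb{R}))$ of $H^1(T';\mathbb{R})$, so that $r(\mathcal{F})$ projectivizes to an open subset of the whole line and its complement can only be finite. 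This requires knowing that a fiber of a fibered $3$-manifold is non-separating and meets every boundary torus in a non-trivial, non-peripheral curve, together with the fact that within a fixed fibered cone the boundary slopes on $T'$ vary (otherwise the Thurston norm would degenerate on $T'$-restriction, contradicting that the norm is a genuine norm on the hyperbolic manifold $M'$); I would spell this out carefully, as it is where the hyperbolicity hypothesis is genuinely used.
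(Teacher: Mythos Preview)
Your proposal has a genuine gap at the step where you analyze the restriction map $r:H^1(M';\mathbb{R})\to H^1(T';\mathbb{R})$. You describe its image $L_{T'}$ as a ``half-dimensional subspace'' of $H^1(T';\mathbb{R})$ that nonetheless ``projects onto the full line $PH^1(T';\mathbb{Q})$''; these two statements are incompatible. Half-lives-half-dies controls the rank of $H^1(M')\to H^1(\partial M')$ globally, but says nothing about the rank of the restriction to a \emph{single} boundary torus: that image can perfectly well be $1$-dimensional (e.g.\ any one-cusped hyperbolic $3$-manifold with $b_1=1$, such as the figure-eight knot complement, and this can persist in fibered covers). If $\dim L_{T'}=1$ then $P(L_{T'})$ is a single point, and your open set $U$ hits at most one slope direction, so the argument collapses. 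What you actually need is that $r$ is \emph{surjective} onto the $2$-dimensional space $H^1(T';\mathbb{R})$, and this is not free: the paper obtains it by first invoking the virtual retract property (Theorem~\ref{virtualretract}) for the geometrically finite subgroup $\pi_1(T)$, passing to a cover $M''$ in which $\pi_1(T)$ is a retract, so that $H_1(T;\mathbb{Z})\to H_1(M'';\mathbb{Z})$ is injective and dually $H^1(M'';\mathbb{R})\to H^1(T;\mathbb{R})$ is onto.

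There is a second gap in your finiteness step. You assert that the complement of $r(\mathcal{F})$ ``consists only of directions lying on $\partial r(\mathcal{F})$'', but the complement of an open cone is not its boundary; if $M'$ has non-fibered top faces in its Thurston norm ball, $r(\mathcal{F})$ can miss an entire open arc of slope directions, and bounding $\partial r(\mathcal{F})$ by images of finitely many hyperplanes does not help. The paper circumvents this with a two-stage cover $M'\to M''\to M$: surjectivity of $H^1(M'')\to H^1(T)$ is arranged at $M''$ via the retract, and then Theorem~\ref{quasifiber} is applied to make \emph{every} nonzero class of $H^1(M'')$ quasi-fibered in $M'$. The argument then looks at the codimension-$1$ hyperplanes $(i^*)^{-1}(l_a)\cap H^1(M'';\mathbb{R})$ in $H^1(M'';\mathbb{R})$: only finitely many such hyperplanes through the origin can avoid every top-dimensional open face of the (genuine, since $M''$ is hyperbolic) Thurston norm ball of $M''$, and any top face of $M''$ lies in the closure of a fibered cone of $M'$ by the quasi-fibered property. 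This is where both the virtual retract and the quasi-fibered theorem are genuinely used, and your outline invokes neither in the right place.
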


\begin{proof}
Since $\pi_1(T)<\pi_1(M)$ is a geometrically finite subgroup, by Theorem \ref{virtualretract}, there is a finite cover $M''$ of $M$ such that $\pi_1(T)<\pi_1(M'')$ and there is a retract homomorphism $\pi_1(M'')\to \pi_1(T)$. In particular, the torus $T$ lifts to $M''$ and the homomorphism $H_1(T;\mathbb{Z})\to H_1(M'';\mathbb{Z})$ induced by inclusion is injective.

By Theorem \ref{quasifiber}, there is a finite cover $p:M'\to M''$, such that for any nontrivial $\alpha\in H^1(M'';\mathbb{Z})$, $p^*(\alpha)\in H^1(M';\mathbb{Z})$ is a quasi-fibered class. Take any elevation $T'$ of $T$ in $M'$ and we will prove that all but finitely many slopes on $T'$ are boundary slopes of fibered surfaces of $M'$.

Let $i$ be the inclusion map $i:T'\to M'$. To show that a slope $a$ on $T'$ is the boundary slope of a fibered surface, we need only to show that there is a fibered class $\alpha\in H^1(M';\mathbb{Z})$, such that $i^*(\alpha)\in H^1(T';\mathbb{Z})$ is a nonzero multiple of the dual of $a$. Actually, since $i^*:H^1(M';\mathbb{R})\to H^1(T';\mathbb{R})$ is represented by an integer entry matrix, it suffices to show that there is a fibered class $\alpha\in H^1(M';\mathbb{R})$, such that $i^*(\alpha)\in H^1(T';\mathbb{R})$ lies in the line containing the dual of $a$.

By the fact that $H_1(T;\mathbb{R})\to H_1(M'';\mathbb{R})$ is injective, $i_*:H_1(T';\mathbb{R})\to H_1(M';\mathbb{R})$ is also injective. So the dual homomorphisms $H^1(M'';\mathbb{R})\to H^1(T;\mathbb{R})$ and $i^*:H^1(M';\mathbb{R})\to H^1(T';\mathbb{R})$ are surjective.

For any slope $a$ on $T'$, let the line in $H^1(T';\mathbb{R})$ containing the dual of $a$ be denoted by $l_a$. Since $i^*:H^1(M';\mathbb{R})\to H^1(T';\mathbb{R})$ is surjective, $(i^*)^{-1}(l_a)$ is a codimension-$1$ hyperplane in $H^1(M';\mathbb{R})$ going through the origin. Since covering maps always induce injective homomorphisms on real coefficient cohomology, we can identify $H^1(M'';\mathbb{R})$ as a subspace of $H^1(M';\mathbb{R})$. Since the covering map induces an isomorphism between $H^1(T;\mathbb{R})$ and $H^1(T';\mathbb{R})$ and $H^1(M'';\mathbb{R})\to H^1(T;\mathbb{R})$ is surjective, we can see that $(i^*)^{-1}(l_a)\cap H^1(M'';\mathbb{R})$ is also a codimension-$1$ hyperplane in $H^1(M'';\mathbb{R})$, and different slopes on $T'$ correspond to different codimension-$1$ hyperplanes in $H^1(M'';\mathbb{R})$.

There are only finitely many codimension-$1$ hyperplanes in $H^1(M'';\mathbb{R})$ that do not intersect with top dimensional (open) faces of the Thurston norm unit ball of $M''$, and these hyperplanes give us finitely many possible exceptional slopes. For any slope $a$ such that the corresponding codimension-$1$ hyperplane $(i^*)^{-1}(l_a)\cap H^1(M'';\mathbb{R})$ intersects with a top dimensional open face $F$ of the Thurston norm unit ball of $M''$, it clearly does not contain the whole face $F$ by dimensional reason. Since $F$ lies in the closure of an (open) fibered face $F'$ of $M'$, we have $F'\cap (i^*)^{-1}(l_a)\ne \emptyset$. So there is a fibered class $\alpha\in H^1(M';\mathbb{R})$, such that $i^*(\alpha)\in l_a$, thus the slope $a$ in $T'$ is the boundary slope of a fibered surface in $M'$.
\end{proof}

Proposition \ref{fiberedboundaryslope} implies that there are only finitely many slopes on $T$ that may not be virtually fibered boundary slopes. However, we do not have any single example of slope that is known not to be a virtually fibered boudnary slope. So maybe it is not too optimistic to ask whether all slopes are virtually fibered boundary slopes.

We do not have an answer for a general hyperbolic $3$-manifold. However, since arithmetic hyperbolic manifolds have a lot of symmetries in their finite covers, these symmetries imply that those finitely many possible exceptional slopes are actually virtually fibered boundary slopes. So we have an affirmative answer for cusped arithmetic hyperbolic $3$-manifolds.

\begin{proposition}\label{arithmeticslope}
Let $M$ be a cusped arithmetic hyperbolic $3$-manifold, and $T$ be a boundary component of $M$, then all slopes on $T$ are virtually fibered boundary slopes.
\end{proposition}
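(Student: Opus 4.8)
The plan is to exploit the \emph{hidden symmetries} of arithmetic manifolds. Recall that a cusped arithmetic hyperbolic $3$-manifold $M$ is commensurable with a Bianchi orbifold $\mathbb{H}^3/\mathrm{PSL}_2(\mathcal{O}_K)$ for some imaginary quadratic field $K$, and that, as a consequence of the classification of cusped arithmetic Kleinian groups together with Margulis's commensurator theorem, the commensurator $\mathrm{Comm}(\pi_1 M) < \mathrm{Isom}_+(\mathbb{H}^3) = \mathrm{PSL}_2(\mathbb{C})$ is conjugate to $\mathrm{PGL}_2(K)$. By Proposition \ref{fiberedboundaryslope}, all but finitely many slopes on $T$ are already virtually fibered boundary slopes. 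Since (i) whether a slope $a$ is a virtually fibered boundary slope depends only on the projective class $\ell_a \in PH^1(T;\mathbb{Q})$ of its dual, and (ii) an element of $\mathrm{Comm}(\pi_1 M)$ that fixes the parabolic point of $T$ carries virtually fibered boundary slopes of $T$ to virtually fibered boundary slopes of $T$, it suffices to show that the induced action of such commensurator elements on $PH^1(T;\mathbb{Q})$ is transitive: a union of orbits of a transitive action is empty or everything, and Proposition \ref{fiberedboundaryslope} shows the set of virtually fibered boundary slopes on $T$ is nonempty (in fact cofinite), so it must be everything.

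For the transitivity, first I would conjugate so that $\pi_1(M)$ lies in $\mathrm{PGL}_2(K)$ and the parabolic point of $T$ is $\infty \in \partial\mathbb{H}^3 = \widehat{\mathbb{C}}$. Then the peripheral subgroup of $T$ is $\{z \mapsto z + \lambda : \lambda \in \Lambda\}$ for a rank-$2$ lattice $\Lambda \subset K$, so that $H_1(T;\mathbb{Q}) = \Lambda \otimes_{\mathbb{Z}} \mathbb{Q} = K$, while the stabilizer of $\infty$ in $\mathrm{Comm}(\pi_1 M) = \mathrm{PGL}_2(K)$ consists of all affine maps $z \mapsto az + b$ with $a \in K^{*}$, $b \in K$. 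On $H_1(T;\mathbb{Q}) = K$ the map $z \mapsto az$ acts as multiplication by $a$, and since $K^{*}$ acts simply transitively on $K \setminus \{0\}$, the induced action on $PH^1(T;\mathbb{Q}) \cong \mathbb{P}(K) \cong \mathbb{P}^1(\mathbb{Q})$ is transitive. This is the conceptual crux, and it is short.

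Finally I would convert a commensurator element into genuine finite-cover data. Fix an exceptional slope $a$ on $T$ and, using Proposition \ref{fiberedboundaryslope}, a slope $b$ on $T$ that is already a virtually fibered boundary slope; choose $g$ in the stabilizer of $\infty$ in $\mathrm{Comm}(\pi_1 M)$ with $g_{*}(\ell_b) = \ell_a$. Setting $\Gamma_0 = \pi_1(M) \cap g^{-1}\pi_1(M)g$, a finite-index subgroup of $\pi_1(M)$, the element $g$ induces an orientation-preserving isometry $\bar g : \mathbb{H}^3/\Gamma_0 \to \mathbb{H}^3/(g\Gamma_0 g^{-1})$ between two finite covers of $M$, and, since $g$ fixes $\infty$, $\bar g$ carries the cusp of $\mathbb{H}^3/\Gamma_0$ over $T$ to the cusp of $\mathbb{H}^3/(g\Gamma_0 g^{-1})$ over $T$. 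Let $M_b \to M$ be a finite cover realizing $b$ as a fibered boundary slope. Passing to a common finite cover of $M_b$ and $\mathbb{H}^3/\Gamma_0$ (using that being a fibered boundary slope is inherited by further finite covers, since the pullback of a fibered class is a fibered class whose restriction to an elevation of $T$ has the same projective class), we may assume $M_b \to M$ factors through $\mathbb{H}^3/\Gamma_0$. Lifting $\bar g$ then produces an isometry from $M_b$ onto a finite cover $M_a$ of $\mathbb{H}^3/(g\Gamma_0 g^{-1})$, hence of $M$, which carries the fibered surface of $M_b$ to a fibered surface of $M_a$ meeting an elevation of $T$ along curves whose dual has projective class $g_{*}(\ell_b) = \ell_a$. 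Thus $a$ is a virtually fibered boundary slope of $M$, and since there are only finitely many exceptional slopes, all slopes on $T$ are virtually fibered boundary slopes.

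I expect the main obstacle to be the bookkeeping in the last step -- faithfully tracking a single cusp and the projective class of the fibered boundary curve through the chain ``finite cover $\leftrightarrow$ commensurator isometry $\leftrightarrow$ finite cover'' -- together with pinning down the precise form of $\mathrm{Comm}(\pi_1 M)$ and of the peripheral lattice $\Lambda$ in the arithmetic setting; the conceptual heart, transitivity of the hidden-symmetry action on $PH^1(T;\mathbb{Q})$, is immediate once that setup is in place.
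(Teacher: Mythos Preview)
Your proposal is correct and follows the same strategy as the paper: invoke Proposition~\ref{fiberedboundaryslope} to reduce to finitely many possibly exceptional slopes, then use hidden symmetries from $\mathrm{Comm}(\pi_1 M)=\mathrm{PGL}_2(K)$ fixing the cusp, realized as a pair of finite covers of $M$, to transport a known virtually fibered boundary slope onto each exceptional one. The only difference is cosmetic: the paper parametrizes slopes by $\arg(\lambda)$ and uses a small-argument diagonal element to rotate the exceptional slope into a nearby \emph{interval} of good slopes, whereas you observe directly that $K^{*}$ acts transitively on $\mathbb{P}(K)\cong PH_1(T;\mathbb{Q})$ and move the exceptional slope onto a single chosen good slope---a slightly cleaner packaging of the same mechanism.
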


\begin{proof}
By Proposition \ref{fiberedboundaryslope}, there are finitely many slopes $a_1,a_2,\cdots,a_n$ on $T$, such that all other slopes on $T$ are virtually fibered boundary slopes. We fix a slope $a=a_1$, and use the arithmetic property of $M$ to show that $a$ is actually a virtually fibered boundary slope.

Recall that, for any cusped arithmetic hyperbolic $3$-manifold $M$, $\pi_1(M)$ is commensurable with $PSL_2(\mathcal{O}_d)$ for some square free $d\in \mathbb{Z}^+$ (Theorem 8.2.3 of \cite{MR}). Here $\mathcal{O}_d$ is the ring of algebraic integers of field $\mathbb{Q}(\sqrt{-d}).$

We can identify $\pi_1(M)$ as a subgroup of $PSL_2(\mathbb{C})\cong \text{Isom}_+(\mathbb{H}^3)$. Up to conjugation, we have $$\text{Comm}(\pi_1(M))=\text{Comm}(PSL_2(\mathcal{O}_d))=PGL_2(\mathbb{Q}(\sqrt{-d})).$$ For any $\Gamma<\text{Isom}_+(\mathbb{H}^3)$, its commensurator is defined by  $$\text{Comm}(\Gamma)=\{g\in \text{Isom}_+(\mathbb{H}^3)\ |\ g\Gamma g^{-1}\cap \Gamma \text{\ is\ a\ finite\ index\ subgroup\ of\ both\ }\Gamma \text{\ and\ } g\Gamma g^{-1}\}.$$

We assume that the cusp $T$ corresponds to a parabolic fixed point $\infty \in \mathbb{C}\cup\{\infty\}=\partial \mathbb{H}^3$ (under the upper half-space model). Then $\pi_1(T)$ is a subgroup of the parabolic stabilizer of $\infty$ in $PGL_2(\mathbb{Q}(\sqrt{-d}))$: $$\{
\begin{pmatrix}
  1 & r \\
  0 & 1
\end{pmatrix}\ |\ r\in \mathbb{Q}(\sqrt{-d})\}.$$

For any $\gamma=\begin{pmatrix}
  1 & r \\
  0 & 1
\end{pmatrix}\in \pi_1(T)$ with $r=a+b\sqrt{-d}$ and $a,b\in \mathbb{Q}$, we use $\text{arg}(a+b\sqrt{-d})$ as a coordinate of $\gamma$.

Suppose that the (possibly exceptional) slope $a$ corresponds to a parabolic element $h=\begin{pmatrix}
  1 & p+q\sqrt{-d}\\
  0 & 1
\end{pmatrix}$. Since there are only finitely many possibly exceptional slopes on $T$, there exists $\epsilon >0$ such that all slopes on $T$ with coordinate in $$(\text{arg}(p+q\sqrt{-d}),\text{arg}(p+q\sqrt{-d})+\epsilon)$$ are virtually fibered boundary slopes.

Then for any $s,r\in \mathbb{Q}$, with $\text{arg}(s+r\sqrt{-d})\in (0,\frac{\epsilon}{2})$, we consider $$g=\begin{pmatrix}
  s+r\sqrt{-d} & 0\\
  0 & (s+r\sqrt{-d})^{-1}
\end{pmatrix}\in PGL_2(\mathbb{Q}(\sqrt{-d}))=\text{Comm}(\pi_1(M)).$$ So $\pi_1(M)\cap g\pi_1(M)g^{-1}$ is a finite index subgroup of both $\pi_1(M)$ and $g\pi_1(M)g^{-1}$. Let $M'$ be the finite cover of $M$ corresponding to $\pi_1(M)\cap g\pi_1(M)g^{-1}$, then $M'$ covers $M$ in two different ways.

We consider $M$ and $M'$ as quotients of $\mathbb{H}^3$ by $M=\mathbb{H}^3/\pi_1(M)$ and $M'=\mathbb{H}^3/\pi_1(M)\cap g\pi_1(M)g^{-1}$. There are two covering maps $$p,p':M'=\mathbb{H}^3/\pi_1(M)\cap g\pi_1(M)g^{-1} \to M=\mathbb{H}^3/\pi_1(M)$$ defined by $p(\overline{x})=\overline{x}$ and $p'(\overline{x})=\overline{g^{-1}x}$. Then the induced homomorphisms on fundamental groups $p_*,p'_*:\pi_1(M')=\pi_1(M)\cap g\pi_1(M)g^{-1}\to \pi_1(M)$ are given by $p_*(h)=h$ and $p'_*(h)=g^{-1}hg$ respectively.

Let $T'$ be the boundary torus of $M'$ corresponding to $\infty \in \partial \mathbb{H}^3$. For the slope $a$ on $T$, it corresponds to $h=\begin{pmatrix}
  1 & p+q\sqrt{-d}\\
  0 & 1
\end{pmatrix}\in \pi_1(T)<\pi_1(M)$. Then under the covering map $p':M'\to M$, the elevation slope $a'$ of $a$ on $T'$ corresponds to $$gh^ng^{-1}=\begin{pmatrix}
  1 & n(s+r\sqrt{-d})^2(p+q\sqrt{-d})\\
  0 & 1
\end{pmatrix}\in \pi_1(T')<\pi_1(M')$$ for some $n\in \mathbb{Z}^+$. Under the covering map $p:M'\to M$, the projection of the slope $a'$ on $T$ still corresponds to matrix $\begin{pmatrix}
  1 & n(s+r\sqrt{-d})^2(p+q\sqrt{-d})\\
  0 & 1
\end{pmatrix}$, which has coordinate $$\text{arg}\big(n(s+r\sqrt{-d})^2(p+q\sqrt{-d})\big)\in (\text{arg}(p+q\sqrt{-d}),\text{arg}(p+q\sqrt{-d})+\epsilon),$$ since $\text{arg}(s+r\sqrt{-d})\in (0,\frac{\epsilon}{2})$.

Since we have assumed that all slopes on $T$ with coordinate in $(\text{arg}(p+q\sqrt{-d}), \text{arg}(p+q\sqrt{-d})+\epsilon)$ are virtually fibered boundary slopes, $a'$ is a virtually fibered boundary slope on $T'\subset \partial M'$ (via the covering map $p$). Since the covering map $p':M'\to M$ maps $a'$ to $a$, $a$ is a virtually fibered boundary slope on $T\subset \partial M$.
\end{proof}

\section{Construction of nonseparable subgroups}\label{nonseparablesubgroup}

In this section, for a nontrivial geometrically finite amalgamation $\pi_1(M_1)*_{A}\pi_1(M_2)$ with an algebraically fibered structure, we construct a nonseparable subgroup of it (we still use the notation $\pi_1(M_1)*_{A}\pi_1(M_2)$, instead of $\pi_1(N_1)*_{A'}\pi_1(N_2)$ as in Theorem \ref{matchingfibering}).

The first step is to find a further subgroup of $\pi_1(M_1)*_{A}\pi_1(M_2)$ (denoted by $\pi_1(N_1)*_{A',A''}\pi_1(N_2)$). This subgroup has an induced graph of group structure with two vertices and two edges, and also has an induced algebraically fibered structure.

The remaining part of the construction is more topological. We first construct a space $X$ that has a graph of space structure with $\pi_1(X)\cong \pi_1(N_1)*_{A',A''}\pi_1(N_2)$. Then we use the cycle in the dual graph of $X$ to construct a compact $2$-complex $Z$ and a map $f:Z\to X$, by pasting fibered surfaces in vertex pieces of $X$ together carefully. Then we show that $f_*(\pi_1(Z))<\pi_1(X)$ is not separable, by assuming it is separable and using Scott's topological interpretation of separability (\cite{Sc}) to get a contradiction.

\subsection{A further subgroup of algebraically fibered $\pi_1(M_1)*_A\pi_1(M_2)$}

In this subsection, for a nontrivial geometrically finite amalgamation $\pi_1(M_1)*_A\pi_1(M_2)$ that has an algebraically fibered structure, we construct a subgroup such that it fits into the ideal model for constructing nonseparable subgroups in \cite{Sun}.

\begin{proposition}\label{2V2E}
Suppose that $\pi_1(M_1)*_A\pi_1(M_2)$ is a nontrivial geometrically finite amalgamation of two finite volume hyperbolic $3$-manifold groups, which has an algebraically fibered structure and satisfies conditions in Theorem \ref{matchingfibering}. Then it has a subgroup $\pi_1(N_1)*_{A', A''}\pi_1(N_2)$ such that the following conditions hold.
\begin{enumerate}
  \item $N_1$, $N_2$ are finite covers of $M_1$, $M_2$ respectively.
  \item There are two nontrivial groups $A',A''$ and four injective homomorphisms $$\ \ \ \ \ \ \ \ \ \ i_1':A'\to \pi_1(N_1),\ i_2':A'\to \pi_1(N_2),\ i_1'':A''\to \pi_1(N_1),\ i_2'':A''\to \pi_1(N_2),$$ such that their images in $\pi_1(N_1)$ and $\pi_1(N_2)$ are geometrically finite and disjoint from each other except at the identity. Then $\pi_1(N_1)*_{A', A''}\pi_1(N_2)$ is isomorphic to the group with a graph of group structure induced by these four injective homomorphisms.
  \item There are two elements $g_1\in \pi_1(M_1)-\pi_1(N_1)$ and $\ g_2\in \pi_1(M_2)-\pi_1(N_2)$, such that $A'=A\cap \pi_1(N_1)$ and $A''=g_1Ag_1^{-1}\cap \pi_1(N_1)$ hold in $\pi_1(M_1)$; while $A'=A\cap \pi_1(N_2)$ and $A''=g_2Ag_2^{-1}\cap \pi_1(N_2)$ hold in $\pi_1(M_2)$. Here we identify $A'$ and $A''$ with their images in $\pi_1(N_1)$ and $\pi_1(N_2)$.
  \item There are fibered classes $\beta_1\in H^1(N_1;\mathbb{Z})$ and $\beta_2\in H^1(N_2;\mathbb{Z})$ such that $\beta_1|_{A'}=\beta_2|_{A'}$ and $\beta_1|_{A''}=\beta_2|_{A''}$, and they are all surjective homomorphisms to $\mathbb{Z}$.
  \item The homomorphism $H_1(A'* A'';\mathbb{Z})\to H_1(N_1;\mathbb{Z})$ induced by the inclusion is injective, and the image is a retraction of $H_1(N_1;\mathbb{Z})$.
\end{enumerate}
\end{proposition}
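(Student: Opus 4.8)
The plan is to produce $\pi_1(N_1)*_{A',A''}\pi_1(N_2)$ from the single-edge subgroup supplied by Theorem~\ref{matchingfibering} by inserting a \emph{second} edge whose edge group is a conjugate of the first. We may assume $A$ has infinite index in both $\pi_1(M_1)$ and $\pi_1(M_2)$: otherwise condition~(2) is unattainable, because a finite-index $A$ forces $A'$ and any conjugate $g_1A'g_1^{-1}$ to have finite, hence nontrivial, intersection. We keep writing $A$ for its images, and the hypothesis that the amalgamation satisfies the conditions of Theorem~\ref{matchingfibering} hands us fibered classes $\alpha_1\in H^1(M_1;\mathbb{Z})$, $\alpha_2\in H^1(M_2;\mathbb{Z})$ with $\alpha_1|_A=\alpha_2|_A$ surjective onto $\mathbb{Z}$.

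\emph{Choosing the second edge.} Since $A$ is an infinite-index (relatively) quasiconvex subgroup of the (relatively) hyperbolic group $\pi_1(M_1)$, its limit set is a proper closed subset of the boundary, so some loxodromic element has both fixed points off it; applying a standard combination theorem for (relatively) quasiconvex subgroups to a high enough power, I would pick $g_1\in\pi_1(M_1)$ with $\langle A,g_1Ag_1^{-1}\rangle=A*g_1Ag_1^{-1}$ geometrically finite in $\pi_1(M_1)$ and $A\cap g_1Ag_1^{-1}=\{1\}$; on the other side one only needs $g_2\in\pi_1(M_2)$ with $A\cap g_2Ag_2^{-1}=\{1\}$, chosen the same way. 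The abstract second edge group will simply be $A'':=A'$ (for the finite-index subgroup $A'\le A$ fixed below), with edge monomorphisms $a\mapsto g_1ag_1^{-1}$ into $\pi_1(N_1)$ and $a\mapsto g_2ag_2^{-1}$ into $\pi_1(N_2)$. The reason for using a conjugate is that, $\mathbb{Z}$ being abelian, every homomorphism $\pi_1(N_i)\to\mathbb{Z}$ automatically restricts to the \emph{same} map on $A'$ and on $g_iA'g_i^{-1}$ under this identification; so every assertion about $A''$ in conditions~(2) and~(4) collapses to the corresponding one about $A'$, and in particular the surjectivity required in~(4) is no extra constraint.

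\emph{Building the covers.} For any finite-index $A'\le A$, the free product $A'*g_1A'g_1^{-1}$ is again geometrically finite in $\pi_1(M_1)$ by the same combination theorem, so by Theorem~\ref{virtualretract} it is a virtual retract; fix a finite-index $G''\le\pi_1(M_1)$ carrying a retraction onto it. Using that $\pi_1(M_1)$ and $\pi_1(M_2)$ are LERF --- hence every finitely generated subgroup occurring here is separable --- one chooses finite covers $N_1\to M_1$ and $N_2\to M_2$ so that, simultaneously, $A\cap\pi_1(N_1)=A\cap\pi_1(N_2)=:A'$ is a fixed finite-index subgroup of $A$; $\pi_1(N_1)\le G''$ contains $A'*g_1A'g_1^{-1}$, meets $g_1Ag_1^{-1}$ exactly in $g_1A'g_1^{-1}$, and omits $g_1$; and symmetrically for $N_2$ and $g_2$. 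This is compatible because passing to deeper covers preserves both the retraction (it restricts) and the fiberedness of a pulled-back cohomology class. Pulling $\alpha_1,\alpha_2$ back to $N_1,N_2$ and normalizing them to be primitive keeps them fibered and still matching on $A'$; if $\alpha_i|_{A'}$ has image $d\mathbb{Z}$, then $A'$, $g_1A'g_1^{-1}$ and $g_2A'g_2^{-1}$ all lie in the degree-$d$ cyclic cover of the relevant $N_i$ determined by $\alpha_i$ (being killed mod $d$), so after passing to that cyclic cover, dividing by $d$, and intersecting once more with the finite-index subgroups above to restore $\pi_1(N_i)\le G''$ and the omission of $g_i$, one obtains primitive fibered classes $\beta_1\in H^1(N_1;\mathbb{Z})$, $\beta_2\in H^1(N_2;\mathbb{Z})$ with $\beta_1|_{A'}=\beta_2|_{A'}$ surjective, a retraction $\rho_1:\pi_1(N_1)\to A'*g_1A'g_1^{-1}$, and all the intersection identities of~(3).

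\emph{Checking the conditions and the embedding.} Condition~(1) is immediate; for~(2), $A'$ and $g_1A'g_1^{-1}$ are geometrically finite and meet trivially inside $A\cap g_1Ag_1^{-1}=\{1\}$, likewise in $\pi_1(N_2)$, and $\pi_1(N_1)*_{A',A''}\pi_1(N_2)$ is defined to be the fundamental group of the graph of groups these four monomorphisms determine; (3) holds by the previous step; (4) holds on $A'$ by the matching of $\beta_1,\beta_2$ and on $A''$ by the abelianness remark above; and (5) holds because $\rho_1$ is a retraction, so $H_1$ of the inclusion $A'*A''\hookrightarrow\pi_1(N_1)$ is split injective, hence has image a direct summand of $H_1(N_1;\mathbb{Z})$. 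For the embedding, the inclusion $\pi_1(N_1)*_{A'}\pi_1(N_2)\hookrightarrow\pi_1(M_1)*_A\pi_1(M_2)$ (valid by the normal form in amalgamated products, as in the proof of Theorem~\ref{matchingfibering}) extends over the stable letter $t$ dual to the new edge by $t\mapsto g_2g_1^{-1}$, which conjugates $g_1A'g_1^{-1}$ to $g_2A'g_2^{-1}$ as required; injectivity of the resulting homomorphism $\pi_1(N_1)*_{A',A''}\pi_1(N_2)\to\pi_1(M_1)*_A\pi_1(M_2)$ is then verified via Bass--Serre theory, by identifying $\langle\pi_1(N_1),\pi_1(N_2),g_2g_1^{-1}\rangle$ as the subgroup acting on the Bass--Serre tree of $\pi_1(M_1)*_A\pi_1(M_2)$ with quotient a graph of two vertices joined by two edges and exactly the prescribed vertex and edge stabilizers --- possibly at the cost of also requiring $g_1,g_2$ to avoid a further finite union of double cosets so that no stabilizer is larger than expected. \emph{The main obstacle is the first step}: producing the second edge group as a conjugate $g_1Ag_1^{-1}$ for which $\langle A,g_1Ag_1^{-1}\rangle$ is a genuine free product that remains geometrically finite, since this is exactly what lets Theorem~\ref{virtualretract} apply to the \emph{edge} group $A'*A''$ and thereby yields the retraction $\rho_1$ underlying condition~(5); everything else is careful bookkeeping with tools already in hand --- LERF separability, virtual retracts, and pullbacks and cyclic covers of fibered classes.
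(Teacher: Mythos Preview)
Your approach is essentially identical to the paper's: choose loxodromic $g_1,g_2$ via a ping-pong/combination argument so that $\langle A,\,g_iAg_i^{-1}\rangle\cong A*g_iAg_i^{-1}$ is geometrically finite, invoke Theorem~\ref{virtualretract} to make $A*g_1Ag_1^{-1}$ a retract of $\pi_1(N_1)$, use LERF to exclude $g_i$ from $\pi_1(N_i)$, pull back the fibered classes, and check injectivity by normal forms (the paper sends $t\mapsto g_1g_2^{-1}$). The paper sidesteps your normalization and the circularity around choosing $A'$ by simply arranging $A*g_iAg_i^{-1}<\pi_1(N_i)$, so that $A'=A''\cong A$ outright and the surjectivity in condition~(4) is inherited directly from the hypothesis.
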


The proof is similar to the proof of Lemma 4.5 of \cite{Sun}.

\begin{proof}
Since $A<\pi_1(M_1)$ is a geometrically finite subgroup, its limit set is a proper closed subset of $S^2_{\infty}=\partial \mathbb{H}^3$. Take a loxodromic element $g_1\in\pi_1(M_1)-\{e\}$, such that both of its limit points do not lie in the limit set of $A$. For a large enough $n_1\in \mathbb{Z}^+$, the obvious homomorphism from $A*g_1^{n_1}Ag_1^{-n_1}$ to $\pi_1(M)$ is injective, and the image is geometrically finite. For simplicity, we denote the image by $A*g_1Ag_1^{-1}$.

Similarly, there is also an element $g_2\in \pi_1(M_2)-\{e\}$ such that the subgroup of $\pi_1(M_2)$ generated by $A$ and $g_2Ag_2^{-1}$ is isomorphic to $A*g_2Ag_2^{-1}$, and it is geometrically finite.

Now we apply LERFness of finite volume hyperbolic $3$-manifold groups to $A*g_1Ag_1^{-1}<\pi_1(M_1)$ and the virtual retract property of $A*g_1Ag_1^{-1}<\pi_1(M_1)$. Then there exist a finite cover $p_1:N_1\to M_1$, such that $g_1\not \in \pi_1(N_1)$, $A*g_1Ag_1^{-1}<\pi_1(N_1)$, and $A*g_1Ag_1^{-1}$ is a retraction of $\pi_1(N_1)$. By the same construction, we get a finite cover $p_2:N_2\to M_2$ with the same property. Actually, since we do not need condition (5) for $N_2$, a simpler construction works.

Then we can get the desired group $\pi_1(N_1)*_{A',A''}\pi_1(N_2)$ by identifying $A<\pi_1(N_1)$ with $A<\pi_1(N_2)$, and identifying $g_1Ag_1^{-1}<\pi_1(N_1)$ with $g_2Ag_2^{-1}<\pi_1(N_2)$. So actually $A'\cong A''\cong A$.

Then it is easy to see that conditions (1), (2), (3) in the proposition hold.

We take $\beta_1=p_1^*(\alpha_1)\in H^1(N_1;\mathbb{Z})$ and $\beta_2=p_2^*(\alpha_2)\in H^1(N_2;\mathbb{Z})$. Since $A'$ and $A''$ are just conjugations of $A$, condition (4) holds.

The fact that $A*g_1Ag_1^{-1}=A'*A''$ is a retraction of $\pi_1(N_1)$ implies that $H_1(A'*A'';\mathbb{Z})$ is a retraction of $H_1(N_1;\mathbb{Z})$. So the inclusion induces an injective homomorphism $H_1(A'*A'';\mathbb{Z})\to H_1(N_1;\mathbb{Z})$, with the image being a retraction of $H_1(N_1;\mathbb{Z})$. So condition (5) holds.

It remains to show that $\pi_1(N_1)*_{A',A''}\pi_1(N_2)$ is isomorphic to a subgroup of $\pi_1(M_1)*_A\pi_1(M_2)$, i.e. the obvious homomorphism is injective.

By van Kampen theorem, $\pi_1(N_1)*_{A',A''}\pi_1(N_2)$ is isomorphic to $$\langle \pi_1(N_1),\pi_1(N_2),t\ |\  i_1'(a')=i_2'(a'), i_1''(a'')=ti_2''(a'')t^{-1} \text{\ for\ any\ } a'\in A', a''\in A''\rangle.$$ Recall that $A'\cong A'' \cong A$, with $i_1''(a)=g_1i_1'(a)g_1^{-1}$ and $i_2''(a)=g_2i_2'(a)g_2^{-1}$ for $g_1\in \pi_1(M_1)-\pi_1(N_1)$ and $g_2\in \pi_1(M_2)-\pi_1(N_2)$.

Then each element $\sigma$ in $\pi_1(N_1)*_{A',A''}\pi_1(N_2)$ can be written in the form of $$\sigma=t^{k_1}h_1t^{k_2}h_2\cdots t^{k_n} h_n t^{k_{n+1}}.$$ Here $k_1,k_{n+1}\in \mathbb{Z}$, $k_2,\cdots, k_n\in \mathbb{Z}-\{0\}$, and each $h_i$ is a nontrivial product of elements in $\pi_1(N_1)$ and $\pi_1(N_2)$. Moreover, we also have the following conditions for $h_i$:
\begin{itemize}
  \item If $h_i$ is a product of more than one terms, then each term does not lie in $i_1'(A')<\pi_1(N_1)$ or $i_2'(A')<\pi_1(N_2)$.
  \item If $h_i$ is just (the product of) one element in $\pi_1(N_1)$ or $\pi_1(N_2)$, then
  \begin{itemize}
    \item if $h_i\in i_1''(A'')<\pi_1(N_1)$, then either $k_i\geq 0$ of $k_{i+1}\leq 0$.
    \item if $h_i\in i_2''(A'')<\pi_1(N_2)$, then either $k_i\leq 0$ of $k_{i+1}\geq 0$.
  \end{itemize}
\end{itemize}

Then the "obvious" homomorphism $$j:\pi_1(N_1)*_{A',A''}\pi_1(N_2)\to \pi_1(M_1)*_A\pi_1(M_2)$$ is defined by $$j(n_1)=n_1 \text{\ if\ } n_1\in \pi_1(N_1),\  j(n_2)=n_2 \text{\ if\ } n_2\in \pi_1(N_2),\ j(t)=g_1g_2^{-1}.$$ By the above form of elements in $\pi_1(N_1)*_{A',A''}\pi_1(N_2)$, the choice of $g_1$ and $g_2$, and the canonical form of elements in an amalgamation, it is routine to check that $j$ is injective. So $\pi_1(N_1)*_{A',A''}\pi_1(N_2)$ is isomorphic to a subgroup of $\pi_1(M_1)*_A\pi_1(M_2)$.
\end{proof}

\subsection{Realize $\pi_1(N_1)*_{A', A''}\pi_1(N_2)$ as the fundamental group of a space}\label{space}

All proofs in the previous part of this paper are in algebraic fashion, since we only worked on the group level and did not realize $\pi_1(M_1)*_{A}\pi_1(M_2)$ and $\pi_1(N_1)*_{A', A''}\pi_1(N_2)$ as fundamental groups of topological spaces. In the following part of this paper, when we construct nonseparable subgroups, we need to realize $\pi_1(N_1)*_{A', A''}\pi_1(N_2)$ as the fundamental group of a topological space. In this subsection, we construct this topological space, and develop some definition for the convenience of further constructions.

For finite volume hyperbolic $3$-manifolds $N_1,N_2$ as in Proposition \ref{2V2E}, we take covering spaces $p_1':\tilde{N}_1'\to N_1$ and $p_2':\tilde{N}_2'\to N_2$ corresponding to $A'<\pi_1(N_1)$ and $A'<\pi_1(N_2)$ respectively. Although having isomorphic fundamental groups, $\tilde{N}_1'$ and $\tilde{N}_2'$ may not be homeomorphic to each other. However, since hyperbolic manifolds are all Eilenberg-Maclane spaces ($K(\pi,1)$), there exists a homotopy equivalence $f':\tilde{N}_1'\to \tilde{N}_2'$. Similarly, for covering spaces $p_1'':\tilde{N}_1''\to N_1$ and $p_2'':\tilde{N}_2''\to N_2$ corresponding to $A''<\pi_1(N_1)$ and $A''<\pi_1(N_2)$ respectively, we also have a homotopy equivalence $f'':\tilde{N}_1''\to \tilde{N}_2''$. For most part of this paper, the readers can just think $f'$ and $f''$ as homeomorphisms.

Now we construct a space $X$ from $N_1\sqcup N_2\sqcup \tilde{N}_1'\times I \sqcup \tilde{N}_1''\times I$ by the following pasting maps: $$p_1': \tilde{N}_1'\times \{0\}\to N_1,\ p_2'\circ f':\tilde{N}_1'\times \{1\}\to N_2,$$
$$p_1'': \tilde{N}_1''\times \{0\}\to N_1,\ p_2''\circ f'':\tilde{N}_1''\times \{1\}\to N_2.$$ Here we apply $p_1',p_1'',f',f''$ to slices of $\tilde{N}_1'\times I$ and $\tilde{N}_1''\times I$ by restricting to the first coordinate.

By van Kampen theorem, $\pi_1(X)$ is isomorphic to $\pi_1(N_1)*_{A', A''}\pi_1(N_2)$.

We also need to define a notion of immersed objects in $X$ that give desired nonseparable subgroups. This will play the role of properly immersed surfaces in mixed $3$-manifolds, and "properly immersed singular surfaces" in $N_1\cup_{c_1\cup c_2}N_2$ as in \cite{Sun}. The main difference is that the immersed object we will construct is not $\pi_1$-injective.

\begin{definition}
For the space $X$ as above, a {\it generalized immersed surface} in $X$ is a pair $(Z,f)$ where $Z$ is a connected compact $2$-complex and $f:Z\to X$ is a map such that the following conditions hold.
\begin{enumerate}
  \item $Z$ is constructed by pasting finitely many intervals $\sqcup I_j$ to finitely many compact oriented surfaces $\sqcup S_i$, by identifying all end points of intervals with distinct points in surfaces.
  \item The $f$-image of each $S_i$ either entirely lies in $N_1$ or entirely lies in $N_2$. Moreover, $f|_{S_i}:S_i\to N_k$ is a $\pi_1$-injective immersion for the corresponding $k\in \{1,2\}$.
  \item The $f$-image of each $I_j$ either entirely lies in $\tilde{N}_1'\times I$ or entirely lies in $\tilde{N}_2'\times I$. Moreover, let $p_k:\tilde{N}_k'\times I\to I$ be the projection to the second factor, then $p_k\circ f|_{I_j}:I_j\to I$ is the identity map on interval, for the corresponding $k\in \{1,2\}$.
\end{enumerate}
\end{definition}

For a generalized immersed surface $(Z,f)$ in $X$, the induced homomorphism $f_*:\pi_1(Z)\to \pi_1(X)$ may not be injective.

\subsection{Construct a generalized immersed surface that carries a nonseparable subgroup}

In this subsection, for the space $X$ as above with fundamental group $\pi_1(N_1)*_{A', A''}\pi_1(N_2)$, we construct a generalized immersed surface $(Z,f)$ in $X$, such that $f_*(\pi_1(Z))$ is the candidate of a nonseparable subgroup of $\pi_1(X)$.

The construction of this nonseparable subgroup is similar to the construction in \cite{Sun}, and the proof of nonseparability essentially follows the idea in \cite{Liu} and \cite{RW}. The only difference is that we do not require the nonseparable subgroup is carried by a $\pi_1$-injective map, and the following statement on the construction of $(Z,f)$ is more complicated.

\begin{proposition}\label{constructnonseparable}
Let $\pi_1(N_1)*_{A',A''}\pi_1(N_2)$ be a group that has a graph of group structure, an algebraically fibered structure, and satisfies all conditions in Proposition \ref{2V2E}. Let $X$ be the topological space with fundamental group isomorphic to $\pi_1(N_1)*_{A',A''}\pi_1(N_2)$ constructed in subsection \ref{space}, then there exists a generalized immersed surface $(Z,f)$ such that the following conditions hold.
\begin{enumerate}
  \item $Z$ is a quotient space of $2+2n$ compact connected oriented surfaces $\{S_{1,i}\}_{i=1}^2$, $\{S_{2,j}\}_{j=1}^{2n}$ and $4n$ intervals $\{I_k\}_{k=1}^{4n}$ (with $n\geq 2$), by identifying all end points of intervals with points in surfaces. Moreover, all surfaces $\{S_{2,j}\}_{j=1}^{2n}$ are homeomorphic to each other.
  \item For each interval $I_k\subset Z$, one of its end point lies in some $S_{1,i}$ and the other end point lies in some $S_{2,j}$. There are $2n+1$ intervals connecting $S_{1,1}$ to surfaces in $\{S_{2,j}\}_{j=1}^{2n}$, and $2n-1$ intervals connecting $S_{1,2}$ to surfaces in $\{S_{2,j}\}_{j=1}^{2n}$. Moreover, all these end points have different images in $X$.
  \item There are two intervals (say $I_1$ and $I_2$) connecting $S_{1,1}$ to $S_{2,1}$.
  \item For each $S_{1,i}$ and $S_{2,j}$, the restriction of $f$ on this surface is an embedding into $N_1$ and $N_2$ respectively, and the image is a fibered surface. Moreover, the images of $\{S_{2,j}\}_{j=1}^{2n}$ are $2n$ parallel copies of an oriented fibered surface in $N_2$.
  \item Let $\gamma_{1,1},\gamma_{1,2}\in H^1(N_1;\mathbb{Z})$ be fibered classes of $N_1$ corresponding to fibered surfaces $S_{1,1},S_{1,2}$ respectively, and $\gamma_2\in H^1(N_2;\mathbb{Z})$ be the fibered class of $N_2$ corresponding to one of the parallel fibered surfaces $S_{2,j}$. Then we have $$\gamma_{1,1}|_{A'}=(n+1)\gamma_2|_{A'},\ \gamma_{1,1}|_{A''}=n\gamma_2|_{A''},$$
      $$\gamma_{1,2}|_{A'}=(n-1)\gamma_2|_{A'},\ \gamma_{1,2}|_{A''}=n\gamma_2|_{A''},$$ as homomorphisms to $\mathbb{Z}$. Moreover, $\gamma_2|_{A'}$ and $\gamma_2|_{A''}$ are surjective homomorphisms.
  \item There exist closed embedded oriented circles $c'$ in $\tilde{N}_1'$ and $c''$ in $\tilde{N}_1''$ such that the following hold. The algebraic intersection numbers of $p_1'(c')$ with $S_{1,1}$ and $S_{1,2}$ in $N_1$ are $n+1$ and $n-1$ respectively, the algebraic intersection numbers of $p_1''(c'')$ with $S_{1,1}$ and $S_{1,2}$ in $N_1$ are both $n$, and the algebraic intersection numbers of $p_2'\circ f'(c')$ and $p_2''\circ f''(c'')$ with each $S_{2,j}$ in $N_2$ are exactly $1$.
  \item The following statement holds for all triples $$(p_1'(c'),S_{1,1},n+1),\ (p_1'(c'),S_{1,2},n-1),\ (p_1''(c''),S_{1,1},n),$$ $$(p_1''(c''),S_{1,2},n),\ (p_2'\circ f'(c'),S_{2,j},1),\ (p_2''\circ f''(c''),S_{2,j},1).$$ We only state it for the triple $(p_1'(c'),S_{1,1},n+1)$, and the statements for other triples are similar.

      There exist $n+1$ points $a_1,a_2,\cdots,a_{n+1}$ in $c'\cap (p_1')^{-1}(S_{1,1})$ such that the following hold.
      \begin{enumerate}
      \item The points $a_1,a_2,\cdots,a_{n+1}$ follow the orientation of $c'$, and the local algebraic intersection number of $p_1'(c')$ and $S_{1,1}$ at each $a_i$ is $1$.
      \item We take the oriented subarc of $c'$ from $a_1$ to $a_i$, then slightly move it along the positive direction of $c'$ and get an oriented subarc $\rho_i\subset c'$ whose end points are away from $(p_1')^{-1}(S_{1,1})$. Then the algebraic intersection number of $p_1'(\rho_i)$ and $S_{1,1}$ is $i-1$.
      \item $p_1'(a_1),p_1'(a_2),\cdots,p_1'(a_{n+1})$ are $n+1$ points on $S_{1,1}$ that are identified with end points of intervals in $Z$. Moreover, $f$ maps these $n+1$ intervals to $\tilde{N}_1'\times [0,1]$.
      \end{enumerate}
  \item There are exactly $n+1$ intervals in $Z$ connecting $S_{1,1}$ to $\{S_{2,j}\}$ that are mapped to $\tilde{N}_1'\times [0,1]$, they give a one-to-one correspondence between $n+1$ points in $c'\times \{0\}$ ($a_1,a_2,\cdots,a_{n+1}$ as above) and $n+1$ points in $c'\times \{1\}$ such that this correspondence preserves the cyclic order on the oriented circle $c'$. Moreover, the $f$-image of these intervals lie in $c'\times [0,1]$, they are disjoint from each other, and their projections to $c'$ are embedded (possibly degenerate) subarcs of $c'$. The same statement holds for $n-1$ edges in $Z$ connecting $S_{1,2}$ to $\{S_{2,j}\}$ that are mapped to $\tilde{N}_1'\times [0,1]$, $n$ edges in $Z$ connecting $S_{1,1}$ to $\{S_{2,j}\}$ that are mapped to $\tilde{N}_1''\times [0,1]$, and $n$ edges in $Z$ connecting $S_{1,2}$ to $\{S_{2,j}\}$ that are mapped to $\tilde{N}_1''\times [0,1]$ .
\end{enumerate}
\end{proposition}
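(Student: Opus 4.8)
The plan is to produce, in this order: (I) the fibered classes $\gamma_{1,1},\gamma_{1,2}$ on $N_1$ and $\gamma_2$ on $N_2$ of conditions (4)--(5); (II) the embedded circles $c'\subset\tilde N_1'$, $c''\subset\tilde N_1''$ of (6); and (III) the compact $2$-complex $Z$ together with the map $f$ realizing the remaining combinatorial conditions.

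\emph{Stage I.} Take the fibered classes $\beta_1\in H^1(N_1;\mathbb Z)$, $\beta_2\in H^1(N_2;\mathbb Z)$ supplied by Proposition \ref{2V2E}(4), set $\gamma_2=\beta_2$, and write $\mu'=\gamma_2|_{A'}$, $\mu''=\gamma_2|_{A''}$; by Proposition \ref{2V2E}(4) these are surjective, so in particular $\gamma_2$ is primitive. By Proposition \ref{2V2E}(5) the restriction $H^1(N_1;\mathbb Z)\to H^1(A'*A'';\mathbb Z)=H^1(A';\mathbb Z)\oplus H^1(A'';\mathbb Z)$ is surjective, so there is $\eta\in H^1(N_1;\mathbb Z)$ with $\eta|_{A'}=\mu'$ and $\eta|_{A''}=0$. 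By Lemma \ref{perturbation} there is $N_0\in\mathbb Z^+$ such that $n\beta_1+\eta$ and $n\beta_1-\eta$ are both fibered classes of $N_1$ whenever $n>N_0$; fix such an $n$ with moreover $n\geq 2$, and set $\gamma_{1,1}=n\beta_1+\eta$, $\gamma_{1,2}=n\beta_1-\eta$. Using $\beta_1|_{A'}=\mu'$ and $\beta_1|_{A''}=\mu''$ one checks at once that $\gamma_{1,1}|_{A'}=(n+1)\mu'$, $\gamma_{1,1}|_{A''}=n\mu''$, $\gamma_{1,2}|_{A'}=(n-1)\mu'$, $\gamma_{1,2}|_{A''}=n\mu''$, which is exactly (5); and since $\mu',\mu''$ are surjective and $\gcd(n,n+1)=\gcd(n-1,n)=1$ (this is where $n\geq 2$ is used), the maps $\gamma_{1,1}|_{A'*A''}$ and $\gamma_{1,2}|_{A'*A''}$ are surjective, forcing $\gamma_{1,1},\gamma_{1,2}$ to be primitive. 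Let $S_{1,1},S_{1,2}\subset N_1$ and $\Sigma_2\subset N_2$ be connected oriented fibered surfaces dual to $\gamma_{1,1},\gamma_{1,2},\gamma_2$, and let the images of $S_{2,1},\dots,S_{2,2n}$ be $2n$ parallel copies of $\Sigma_2$.

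\emph{Stage II.} On the cover $p_1':\tilde N_1'\to N_1$ with $\pi_1(\tilde N_1')\cong A'$, the three pulled-back classes $(p_1')^*\gamma_{1,1}=(n+1)\mu'$, $(p_1')^*\gamma_{1,2}=(n-1)\mu'$ and $(p_2'\circ f')^*\gamma_2=\mu'$ are all \emph{positive multiples of the single primitive class} $\mu'\in H^1(\tilde N_1';\mathbb Z)$, and similarly on $\tilde N_1''$ everything in sight is a multiple of $\mu''$. Consequently the intersection-number assertions of (6) reduce to the single requirement $\mu'([c'])=1$, resp.\ $\mu''([c''])=1$. I then choose $c'$ embedded with $\mu'([c'])=1$ and homotope it inside $\tilde N_1'$, keeping it embedded by general position in a $3$-manifold, so that a fixed map $\tilde N_1'\to S^1$ representing $\mu'$ restricts on $c'$ to a degree-one monotone map; with the fibers $S_{1,1},S_{1,2}$ of $N_1$ and the parallel copies $S_{2,j}$ of $\Sigma_2$ then put in general position, $c'$ meets $(p_1')^{-1}(S_{1,1})$, $(p_1')^{-1}(S_{1,2})$ and each $(p_2'\circ f')^{-1}(S_{2,j})$ in exactly $n+1$, $n-1$, $1$ points respectively, all of local sign $+1$ (the algebraic counts being forced by Stage I) and in the cyclic order along $c'$ dictated by the $S^1$-direction; reading off the partial-arc intersection numbers from monotonicity yields (7). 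The circle $c''\subset\tilde N_1''$ is produced in the same way with $\mu''$ in place of $\mu'$.

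\emph{Stage III.} Assemble $Z$ from $S_{1,1},S_{1,2},S_{2,1},\dots,S_{2,2n}$ and $4n$ intervals. Inside the annulus $c'\times[0,1]\subset\tilde N_1'\times[0,1]$ connect the $2n$ points of $c'$ on $(p_1')^{-1}(S_{1,1}\cup S_{1,2})$ to the $2n$ points of $c'$ on $(p_2'\circ f')^{-1}(\bigcup_j S_{2,j})$ by $2n$ pairwise disjoint arcs that are monotone in the $[0,1]$-coordinate and preserve the cyclic order of $c'$ --- possible because both orderings are the $\mu'$-ordering of $c'$ --- with $n+1$ arcs leaving $S_{1,1}$ and $n-1$ leaving $S_{1,2}$; do the same in $c''\times[0,1]$ with $n$ arcs leaving each of $S_{1,1},S_{1,2}$. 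Taking these $4n$ arcs as the intervals $I_k$, and defining $f$ to be $p_1'$, $p_2'\circ f'$, $p_1''$, $p_2''\circ f''$ on the appropriate pieces, produces a generalized immersed surface with $2+2n$ surfaces and $4n$ intervals satisfying (1), (2), (8), (9), and after relabelling the parallel copies $S_{2,j}$ and translating the cyclic matchings one arranges that exactly two of the $2n+1$ intervals leaving $S_{1,1}$ terminate on $S_{2,1}$, which is (3). The main difficulty is concentrated in Stages II--III: arranging that a single embedded circle is simultaneously in monotone position for the three proportional classes, and organizing the cyclic matchings on the two edge annuli to realize the prescribed $2n+1$ and $2n-1$ distributions together with condition (3). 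This is the combinatorial heart of the construction, and it is carried out following the constructions in \cite{Sun} and the combinatorial arguments of \cite{Liu, RW}.
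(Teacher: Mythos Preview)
Your Stage I is essentially identical to the paper's: both use the retraction from Proposition \ref{2V2E}(5) to extend $\beta_1|_{A'}$ to a class $\eta$ on $N_1$ with $\eta|_{A''}=0$, then set $\gamma_{1,1}=n\beta_1+\eta$, $\gamma_{1,2}=n\beta_1-\eta$, $\gamma_2=\beta_2$ for $n$ large. Stage III also matches: both pair the $2n$ selected points on $c'\times\{0\}$ with those on $c'\times\{1\}$ by a cyclic-order-preserving bijection and realize each interval as a monotone arc in $c'\times[0,1]$, with a rotation of the pairing (or relabelling of the $S_{2,j}$) to secure condition (3).

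The gap is in Stage II. You claim that after homotoping $c'$ so that a fixed map $g:\tilde N_1'\to S^1$ representing $\mu'$ is monotone on $c'$, the circle meets $(p_1')^{-1}(S_{1,1})$, $(p_1')^{-1}(S_{1,2})$, and each $(p_2'\circ f')^{-1}(S_{2,j})$ in exactly $n+1$, $n-1$, $1$ points, all of sign $+1$. But these preimages are level sets of \emph{three distinct} maps $\tilde N_1'\to S^1$ (pulled back from fibrations of $N_1$ and of $N_2$), not level sets of your chosen $g$ or of its powers. Monotonicity of $g|_{c'}$ says nothing about the signs of intersections of $c'$ with a level set of a different map, even one representing a multiple of the same cohomology class. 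Moreover the same surfaces $S_{1,1},S_{1,2}$ must simultaneously satisfy the analogous conditions for $c''\subset\tilde N_1''$, so you cannot freely adjust the surfaces after choosing $c'$.

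The paper avoids this issue entirely: it only puts $c'$ in general position (allowing intersections of both signs) and then \emph{selects} the required $n+1$ positive points $a_1,\dots,a_{n+1}$ in $c'\cap(p_1')^{-1}(S_{1,1})$ by a running-sum argument --- walk around $c'$ accumulating signed intersection numbers and declare $a_i$ to be the first point where the partial sum reaches $i$. This automatically yields local sign $+1$ at each $a_i$ and partial-arc intersection number $i-1$ from $a_1$ to $a_i$, which is exactly (7)(a)--(b). Replacing your monotonicity step with this selection argument closes the gap, and the rest of your proof goes through unchanged.
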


The readers should compare Proposition \ref{constructnonseparable} with Proposition 4.8 of \cite{Sun}. Although the statement of Proposition \ref{constructnonseparable} is more complicated than Proposition 4.8 of \cite{Sun}, it just follows the same idea. Since we have a more complicated space $X$ in the current situation, the statement gets more complicated.

\begin{proof}
By Proposition \ref{2V2E} (4), there are fibered classes $\beta_1\in H^1(N_1;\mathbb{Z})$ and $\beta_2\in H^1(N_2;\mathbb{Z})$ such that $$\beta_1|_{A'}=\beta_2|_{A'},\ \beta_1|_{A''}=\beta_2|_{A''}$$ hold, and they are both surjective homomorphisms to $\mathbb{Z}$.

By Proposition \ref{2V2E} (5), the homomorphism $H_1(A'*A'';\mathbb{Z})\to H_1(N_1;\mathbb{Z})$ induced by inclusion is injective, with a retract homomorphism $\phi: H_1(N_1;\mathbb{Z})\to H_1(A'*A'';\mathbb{Z})$. So for the homomorphism $\delta:A'*A''\to \mathbb{Z}$ (equivalently $H_1(A'*A'';\mathbb{Z})\to \mathbb{Z}$) defined by $\delta|_{A'}=\beta_1|_{A'},\ \delta|_{A''}=0$, the composition $\gamma=\delta\circ \phi:H_1(N_1;\mathbb{Z})\to \mathbb{Z}$ gives a cohomology class $\gamma\in H^1(N_1;\mathbb{Z})$.

Since $\beta_1$ is a fibered class of $N_1$, by Lemma \ref{perturbation}, for large enough $n\in \mathbb{Z}^+$, $$\gamma_{1,1}=n\beta_1+\gamma,\ \gamma_{1,2}=n\beta_1-\gamma\in H^1(N_1;\mathbb{Z})$$ are both fibered classes of $N_1$. Then we take $\gamma_2=\beta_2$ as the desired fibered class of $N_2$. Since $\gamma|_{A'}=\beta_1|_{A'}$ and $\gamma|_{A''}=0$, it is easy to check that condition (5) holds. For example, we have $$\gamma_{1,1}|_{A'}=n\beta_1|_{A'}+\gamma|_{A'}=(n+1)\beta_1|_{A'}=(n+1)\beta_2|_{A'}=(n+1)\gamma_2|_{A'}.$$

Since $\gamma_2=\beta_2$, $\gamma_2$ is a primitive class in $H^1(N_2;\mathbb{Z})$, and its restrictions on $A'$ and $A''$ are surjective homomorphisms to $\mathbb{Z}$. Since $\gamma_{1,1}|_{A'}$ has image $(n+1)\mathbb{Z}$ and $\gamma_{1,1}|_{A''}$ has image $n\mathbb{Z}$, $\gamma_{1,1}$ is a primitive class in $H^1(N_1;\mathbb{Z})$. The same argument implies $\gamma_{1,2}$ is also a primitive class.

Then we take connected oriented fibered surfaces $S_{1,1}$ and $S_{1,2}$ in $N_1$ corresponding to fibered classes $\gamma_{1,1}$ and $\gamma_{1,2}$ respectively, take $2n$ parallel copies of the connected oriented fibered surface in $N_2$ corresponding to $\gamma_2$, and denote them by $\{S_{2,j}\}_{j=1}^{2n}$. The construction of these fibered surfaces satisfies condition (4).

Since $\gamma_2|_{A'}:A'\to \mathbb{Z}$ is surjective and $\pi_1(\tilde{N}_1')\cong A'$, there exists a closed embedded oriented circle $c'$ in $\tilde{N}_1'$, such that $\gamma_2([c'])=1$. Then we have $\gamma_{1,1}([c'])=(n+1)\gamma_2([c'])=n+1$ and $\gamma_{1,2}([c'])=(n-1)\gamma_2([c'])=n-1$. Similarly, there exists a closed embedded oriented circle $c''$ in $\tilde{N}_1''$, such that $\gamma_2([c''])=1$, and $\gamma_{1,1}([c''])=\gamma_{1,2}([c''])=n$. These equalities imply that condition (6) holds. We can also homotopy $c'$ and $c''$ such that their images in $N_1$ and $N_2$ have general position with $S_{1,1},S_{1,2}$ and $S_{2,j}$.

Now we consider condition (7). We only work on the triple $(p_1'(c'),S_{1,1},n+1)$, and the same argument works for all other triples. Actually, the requirements in condition (7) instruct us how to choose the points $a_1,a_2,\cdots,a_{n+1}$. Since we supposed that $p_1'(c')$ intersects with $S_{1,1}$ transversely, each point in $p_1(c')\cap S_{1,1}$ has local algebraic intersection number $\pm 1$, and their sum is $\gamma_{1,1}([c'])=n+1$. We start with a point $a_1\in c'$ with local algebraic intersection number $1$, then follow the orientation of $c'$ and sum algebraic intersection numbers of points in $c'\cap (p_1')^{-1}(S_{1,1})$ we have visited. Let $a_i$ be the first point we get total algebraic intersection number $i$, then it is obvious that we have local algebraic intersection number $1$ at each $a_i$, and desired properties in condition (7) (a) and (7) (b) hold.

Condition (7) (a) and (7) (b) provides us $2n=(n+1)+(n-1)$ points in $c'\cap (p_1')^{-1}(S_{1,1}\cup S_{1,2})$, $2n=n+n$ points in $c''\cap (p_1'')^{-1}(S_{1,1}\cup S_{1,2})$, $2n=1\times 2n$ points in $c'\cap (p_2'\circ f')^{-1}(\cup_{j=1}^{2n}S_{2,j})$ and $2n=1\times 2n$ points in $c''\cap (p_2''\circ f'')^{-1}(\cup_{j=1}^{2n}S_{2,j})$. These points will be pasted with end points of intervals in $Z$, and we will see that these numbers of points fulfill the requirements in condition (2).

Then we pair the $2n$ points in $c'\cap (p_1')^{-1}(S_{1,1}\cup S_{1,2})$ with the $2n$ points in $c'\cap (p_2'\circ f')^{-1}(\cup_{j=1}^{2n}S_{2,j})$, and pair the $2n$ points in $c''\cap (p_1')^{-1}(S_{1,1}\cup S_{1,2})$ with the $2n$ points in $c''\cap (p_2''\circ f'')^{-1}(\cup_{j=1}^{2n}S_{2,j})$, such that these pairings preserve cyclic orders on $c'$ and $c''$. To fulfill condition (3), we need to pair one point in $c'\cap (p_1')^{-1}(S_{1,1})$ with the point in $c'\cap (p_2'\circ f')^{-1}(S_{2,1})$, and one point in $c''\cap (p_1'')^{-1}(S_{1,1})$ with the point in $c''\cap (p_2''\circ f'')^{-1}(S_{2,1})$.

Now we construct the map on intervals $\{I_k\}_{k=1}^{4n}$. For each pair of points in $c'$ and $c''$ that are identified by the above pairing, we construct a map on an interval to connect these two points. For example, if these two points are $q_1\in c'\cap (p_1')^{-1}(S_{1,1})$ and $q_2\in c'\cap (p_2'\circ f')^{-1}(S_{2,1})$, then we use an embedded subarc $\delta$ of $c'$ from $q_1$ to $q_2$ to connect these two points. Then we get a map from $[0,1]$ to $\tilde{N}_1'\times [0,1]$ defined by $t\to (\delta(t),t)$ connecting $(q_1,0)$ and $(q_2,1)$. This construction on intervals satisfies condition (2), (7) (c) and (8). We can also make sure that these intervals are disjoint from each other in $c'\times [0,1]$.

In the above process, we have $2n$ maps from $[0,1]$ to $\tilde{N}_1'\times I$, and $2n$ maps from $[0,1]$ to $\tilde{N}_1''\times I$. We paste their endpoints with the corresponding points in $(S_{1,1}\sqcup S_{1,2})\sqcup(\sqcup_{j=1}^{2n}S_{2,j})$, and get the desired $2$-complex $Z$. The map $f$ is already given during our construction. So we get a generalized immersed surface $(Z,f)$ that satisfies all desired conditions.

\end{proof}

\subsection{Construction of the covering space of $X$ corresponding to $f_*(\pi_1(Z))$}\label{complicated}

In this subsection, we figure out the topology of the covering space  $\hat{X}$ of $X$ corresponding to $f_*(\pi_1(Z))<\pi_1(X)$, and show that $f:Z\to X$ lifts to an embedding $\hat{f}:Z\hookrightarrow \hat{X}$. In the next subsection, with the knowledge that $Z$ lifts to be embedded in $\hat{X}$, we suppose $f_*(\pi_1(Z))<\pi_1(X)$ is separable, then apply Scott's topological interpretation of separability (\cite{Sc}) and get a contradiction. The corresponding covering space in \cite{Sun} is easy to figure out, since the edge spaces in \cite{Sun} are very simple. Our current case is more complicated, so we give a more detailed construction of the covering space.

Let $\hat{N}_{1,1}$ and $\hat{N}_{1,2}$ be the infinite cyclic cover of $N_1$ corresponding to (the kernel of) $\gamma_{1,1}$ and $\gamma_{1,2}$ respectively, and let $\hat{N}_2$ be the infinite cyclic cover of $N_2$ corresponding to $\gamma_2$. Let $\hat{N}'_1$ be the infinite cyclic cover of $\tilde{N}_1'$ corresponding to $\gamma_{1,1}|_{A'}$, and $\hat{N}''_1$ be the infinite cyclic cover of $\tilde{N}_1''$ corresponding to $\gamma_{1,1}|_{A''}$. Similarly, let $\hat{N}'_2$ be the infinite cyclic cover of $\tilde{N}_2'$ corresponding to $\gamma_2|_{A'}$, and $\hat{N}''_2$ be the infinite cyclic cover of $\tilde{N}_2''$ corresponding to $\gamma_2|_{A''}$.

Note that $\hat{N}'_1$ is homeomorphic to the infinite cyclic cover of $\tilde{N}_1'$ corresponding to $\gamma_{1,2}|_{A'}$, since $\gamma_{1,1}|_{A'}$ and $\gamma_{1,2}|_{A'}$ are both nonzero multiples of $\gamma_2|_{A'}$, and they have the same kernel. Moreover, $\hat{N}'_1$ is homotopic equivalent to $\hat{N}'_2$, since $A'=\pi_1(\tilde{N}_1')$ is isomorphic to $A'=\pi_1(\tilde{N}_2')$ and $\gamma_{1,1}|_{A'}$ has the same kernel as $\gamma_2|_{A'}$. The same statement also holds for $\hat{N}''_1$.

Let $N_1^*=N_1\cup \tilde{N}_1'\times [0,1]\cup \tilde{N}_1''\times [0,1]$ be the subspace of $X$ corresponding to one vertex and two edges of the dual graph of $X$. Technically we should take $\tilde{N}_1'\times [0,1-\epsilon]$ and $\tilde{N}_1''\times [0,1-\epsilon]$, or saying that $X$ is a quotient space of $N_1^*$, but we abuse notation here. Note that $N_1\subset N_1^*$ is a deformation retract of $N_1^*$.

Let $\hat{N}_{1,1}^*$ and $\hat{N}_{1,2}^*$ be the infinite cyclic cover of $N_1^*$ corresponding to $\gamma_{1,1}:\pi_1(N_1)\to \mathbb{Z}$ and $\gamma_{1,2}:\pi_1(N_1)\to \mathbb{Z}$ respectively. Since $\pi_1(\tilde{N}_1')=A'$ and $\pi_1(\tilde{N}_1'')=A''$, $\gamma_{1,1}|_{A'}$ is an $(n+1)$-multiple of a primitive element in $H^1(A';\mathbb{Z})$ and  $\gamma_{1,1}|_{A''}$ is an $n$-multiple of a primitive element in $H^1(A'';\mathbb{Z})$, $\hat{N}_{1,1}^*$ is the union of $\hat{N}_{1,1}$, $n+1$ copies of $\hat{N}'_1\times [0,1]$ and $n$ copies of $\hat{N}''_1\times [0,1]$. Similarly, $\hat{N}_{1,2}^*$ is the union of $\hat{N}_{1,2}$, $n-1$ copies of $\hat{N}'_1\times [0,1]$ and $n$ copies of $\hat{N}''_1\times [0,1]$.

The space $Z$ has a graph of space structure. Its dual graph $G$ has $2+2n$ vertices, and they correspond to $\{S_{1,1},S_{1,2}\}\cup \{S_{2,j}\}_{j=1}^{2n}$. There are $4n$ edges in $G$ and each of them connects one vertex in $\{S_{1,1},S_{1,2}\}$ to one vertex in $\{S_{2,j}\}_{j=1}^{2n}$, and they correspond to intervals in $Z$. Each edge also has a marking in $\{1,2\}$, corresponding to whether $f$ maps this edge to $\tilde{N}_1'\times [0,1]$ or $\tilde{N}_1''\times [0,1]$ respectively.

Let $S_{1,1}^*$ be the union of $S_{1,1}$ and all its adjacent edges in $Z$, and $S_{1,2}^*$ be the union of $S_{1,2}$ and all its adjacent edges in $Z$. Then $f|_{S_{1,1}^*}:S_{1,1}^*\to N_1^*$ and $f|_{S_{1,2}^*}:S_{1,2}^*\to N_1^*$ have liftings $\hat{f}|_{S_{1,1}^*}:S_{1,1}^*\to \hat{N}_{1,1}^*$ and $\hat{f}|_{S_{1,2}^*}:S_{1,2}^*\to \hat{N}_{1,2}^*$ respectively. For $S_{1,1}^*$, it is clear that $\hat{f}|_{S_{1,1}^*}$ maps edges in $S_{1,1}^*$ with marking $1$ to copies of $\hat{N}_1'\times [0,1]$, and maps edges with marking $2$ to copies of $\hat{N}_1''\times [0,1]$, and the same statement holds for $\hat{f}|_{S_{1,2}^*}$.

Moreover, condition (7) in Proposition \ref{constructnonseparable} implies that the $n+1$ edges in $S_{1,1}^*$ with marking $1$ are mapped to $n+1$ different copies of $\hat{N}'_1\times [0,1]$ in $\hat{N}_{1,1}^*$ (see the proof of Proposition 4.10 of \cite{Sun}). The same statement also holds for the $n$ edges in $S_{1,1}^*$ with marking $2$, the $n-1$ edges in $S_{1,2}^*$ with marking $1$ and the $n$ edges in $S_{1,2}^*$ with marking $2$.

Then we take $2n$ copies of $\hat{N}_2$ and denote them by $\{\hat{N}_{2,j}\}_{j=1}^{2n}$, each endowed with a lifting of $S_{2,j}\subset N_2$ to $S_{2,j}\subset\hat{N}_{2,j}$, and two marked points in $S_{2,j}$ corresponding to end points of intervals in $Z$ that are pasted to $S_{2,j}$.

For each $S_{2,j}\subset Z$, there are two intervals in $Z$ connected with it. $\hat{f}|_{S_{1,1}^*}$ and $\hat{f}|_{S_{1,2}^*}$ map these two intervals to some copy of $\hat{N}_1'\times [0,1]$ and $\hat{N}_1''\times [0,1]$ in $\hat{N}_{1,1}^*$ or $\hat{N}_{1,2}^*$. Then we can paste the ends of $\hat{N}_1'\times [0,1]$ and $\hat{N}_1''\times [0,1]$ with second coordinate $1$ to $\hat{N}_{2,j}$ by maps $\hat{N}_1'\times \{1\}\to\hat{N}_2'\to \hat{N}_{2,j}$ and $\hat{N}_1''\times \{1\}\to\hat{N}_2''\to \hat{N}_{2,j}$ that send the end points of intervals in $\hat{N}_1'\times \{1\}$ and $\hat{N}_1''\times \{1\}$ to the corresponding two marked points in $S_{2,j}\subset \hat{N}_{2,j}$. In the composition $\hat{N}_1'\times \{1\}\to\hat{N}_2'\to \hat{N}_{2,j}$, the first map is a homotopy equivalence and the second one is a covering map. It is the lifting of $p_2'\circ f':\tilde{N}_1'\times \{1\}\to \tilde{N}_2'\to N_2$ to the corresponding infinite cyclic covers.

After pasting $2n$ copies of $\hat{N}'_1\times \{1\}$ and $2n$ copies of $\hat{N}_1''\times \{1\}$ in $\hat{N}_{1,1}^*$ and $\hat{N}_{1,2}^*$ with $2n$ copies of $\hat{N}_2$ by the above process, we get a space $Y$ with a graph of space structure such that its dual graph is isomorphic to the dual graph of $Z$. Moreover, we have an embedding $\hat{f}:Z\hookrightarrow Y$ that induces an isomorphism on the dual graphs.

Then we prove that the space $Y$ constructed above is the covering space of $X$ corresponding to the subgroup $f_*(\pi_1(Z))<\pi_1(X)$.

\begin{lemma}\label{topology}
Let $\hat{X}$ be the covering space of $X$ corresponding to the subgroup $f_*(\pi_1(Z))<\pi_1(X)$, then $\hat{X}$ is homeomorphic to the space $Y$ constructed above.

Moreover, $f:Z\to X$ lifts to an embedding $\hat{f}:Z\hookrightarrow \hat{X}$.
\end{lemma}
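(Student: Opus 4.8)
The plan is to recognize $Y$ as a covering space of $X$ through the standard formalism of graphs of spaces, and then to compute the subgroup of $\pi_1(X)$ that it determines. First I would check that the map $p\colon Y\to X$ assembled in \S\ref{complicated} is a covering map. The space $X$ is a graph of spaces whose underlying graph has two vertices ($N_1$, $N_2$) and two edges ($\tilde N_1'$, $\tilde N_1''$); a connected covering of $X$ is the same data as a connected covering $\hat G$ of that graph, together with, over each vertex and edge of $\hat G$, a connected covering of the corresponding vertex or edge space of $X$, and, at each end of each edge, a lift of the attaching map fitting into a commuting square of covering maps. The construction in \S\ref{complicated} produces exactly this data: $\hat G$ is the dual graph of $Z$ (which maps to the dual graph of $X$ in the evident way, and is connected because $Z$ is), the vertex covers are $\hat N_{1,1}$, $\hat N_{1,2}$ and the $\hat N_{2,j}$, the edge covers are the copies of $\hat N_1'$ and $\hat N_1''$, and the lifts of $p_1'$, $p_2'\circ f'$, $p_1''$, $p_2''\circ f''$ to the relevant infinite cyclic covers are precisely the pasting maps used to build $Y$. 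Since all the pieces and all the gluing loci are connected, $Y$ is connected, and the classical theory then gives that $p\colon Y\to X$ is a covering map, with $p\circ\hat f=f$ by construction; moreover $\hat f$ is an embedding, since each vertex surface $S$ of $Z$ is sent homeomorphically onto a fiber of the corresponding infinite cyclic cover of a fibered class, each interval of $Z$ is sent homeomorphically onto an embedded arc in a copy of $\hat N_1'\times[0,1]$ or $\hat N_1''\times[0,1]$, and $\hat f$ induces an isomorphism of dual graphs.

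Next I would identify $p_*\pi_1(Y)\le\pi_1(X)$. One inclusion is immediate from $f=p\circ\hat f$, which gives $f_*\pi_1(Z)=p_*\hat f_*\pi_1(Z)\subseteq p_*\pi_1(Y)$. For the reverse inclusion, recall that the fundamental group of any connected graph of spaces with finite connected underlying graph and connected pieces is generated by the images of the fundamental groups of its vertex spaces together with one stable letter $t_e$ for each edge $e$ outside a fixed maximal tree; crucially this holds whether or not the edge groups are finitely generated, which is the relevant point here since the edge group $\pi_1(\hat N_1')=\ker(\gamma_{1,1}|_{A'})$ may be infinitely generated (cf.\ the Remark following Theorem \ref{matchingfibering}). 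Each vertex space of $Y$ deformation retracts onto a fibered surface that is the $\hat f$-image of the corresponding vertex surface of $Z$, so the image in $\pi_1(Y)$ of its fundamental group lies in $\hat f_*\pi_1(Z)$; and each stable letter $t_e$, being represented by a loop running along maximal-tree edges and across $e$, is (up to left and right translation by vertex-group elements, which already lie in $\hat f_*\pi_1(Z)$) the $\hat f$-image of a loop in $Z$ assembled from the interval $I_e$ and arcs in the vertex surfaces along the maximal tree, using that $\hat f$ induces an isomorphism of dual graphs. Hence $\pi_1(Y)$ is generated by elements of $\hat f_*\pi_1(Z)$, so $\pi_1(Y)=\hat f_*\pi_1(Z)$ and therefore $p_*\pi_1(Y)=f_*\pi_1(Z)$.

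Finally, $Y$ is a connected covering of $X$ with $p_*\pi_1(Y)=f_*\pi_1(Z)$, so by the classification of covering spaces $Y$ is homeomorphic over $X$ to $\hat X$; under this homeomorphism $\hat f\colon Z\hookrightarrow Y$ becomes the asserted embedding of $Z$ into $\hat X$ lifting $f$. I expect the main work to be in the first step: verifying that the attaching maps of \S\ref{complicated} really are compatible lifts in the graph-of-spaces covering formalism — in particular that condition (7) of Proposition \ref{constructnonseparable} forces the marked intervals of $S_{1,1}^*$ (and similarly for the other three families) to land in \emph{distinct} copies of $\hat N_1'\times[0,1]$, which is exactly what makes the combinatorics of the gluing in $Y$ reproduce the dual graph of $Z$ — and in invoking the covering theory for graphs of spaces at the level of generality that tolerates infinitely generated edge groups. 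That last observation, though elementary, is the feature that lets a compact complex $Z$ carry this possibly infinitely presented subgroup.
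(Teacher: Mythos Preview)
Your proposal is correct and follows essentially the same approach as the paper: build a covering map $p\colon Y\to X$ piece by piece from the graph-of-spaces structure, observe that $\hat f$ is an embedded lift of $f$, and then show $\hat f_*$ is surjective onto $\pi_1(Y)$ by noting that $\pi_1(Y)$ is generated by the vertex groups (each hit isomorphically by a vertex surface of $Z$) together with the fundamental group of the dual graph (which $\hat f$ matches isomorphically). The paper phrases the last step via a section $s\colon G\to Y$ of the dual graph factoring through $\hat f$, while you phrase it via stable letters outside a maximal tree; these are the same argument, and your added remarks about condition~(7) and about tolerating infinitely generated edge groups are apt but already absorbed into the construction of $Y$ preceding the lemma.
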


\begin{proof}
We first construct a covering map $\pi:Y\to X$, such that the embedding $\hat{f}:Z\hookrightarrow Y$ is a lifting of $f:Z\to X$, then show that $\hat{f}:Z\to Y$ is a $\pi_1$-surjective map. The existence of a lifting $\hat{f}:Z\to Y$ implies $f_*(\pi_1(Z))<\pi_1(Y)$, and $\pi_1$-surjectivity implies $f_*(\pi_1(Z))=\pi_1(Y)$. So $Y$ is homeomorphic to the covering space of $X$ corresponding to $f_*(\pi_1(Z))$, and $\hat{f}:Z\hookrightarrow Y$ is an embedded lifting of $f:Z\to X$.

The covering map $\pi:Y\to X$ is almost obvious. On $\hat{N}_{1,1}^*\subset Y$ and $\hat{N}_{1,2}^*\subset Y$, we take their covering maps to $N_1^*$ in the construction of $\hat{N}_{1,1}^*$ and $\hat{N}_{1,2}^*$. On each $\hat{N}_{2,j}\subset Y$, we take its covering map to $N_2$. Then we get a well-defined map $\pi:Y\to X$. The only thing we need to check is that $\pi$ is a local homeomorphism near each $\hat{N}_{2,j}$, since all the other points in $Y$ lie in the interior of $\hat{N}_{1,1}^*\subset Y$ or $\hat{N}_{1,2}^*\subset Y$. This local homeomorphism property is easy to check and we leave it to the readers.

The definition of $\hat{f}:Z\to Y$ immediately implies that $\pi\circ \hat{f}=f$, so $\hat{f}:Z\hookrightarrow Y$ is an embedded lifting of $f:Z\to X$.

Now we show that $\hat{f}:Z\to Y$ is $\pi_1$-surjective. At first, by the construction of $Y$, the restriction of $\hat{f}$ on each vertex space of $Z$ to the corresponding vertex piece of $Y$ induces an isomorphism on their fundamental groups. Since the fundamental group of each edge space of $Y$ is a subgroup of fundamental groups of its two adjacent vertex spaces, the van-Kampen theorem implies that $\pi_1(Y)$ is generated by the fundamental group of its vertex spaces, and the fundamental group of the dual graph of $Y$ (by choosing a "section" $s:G\to Y$ from the dual graph $G$ to $Y$). Since $\hat{f}:Z\to Y$ induces an isomorphism on the dual graph, we can assume the section $s:G\to Y$ factors through $\hat{f}:Z\to Y$. Then the groups of vertex spaces of $Y$ and the group of the dual graph of $Y$ (via a section) are both contained in $\hat{f}_*(\pi_1(Z))$. So $\hat{f}:Z\to Y$ is $\pi_1$-surjective.
\end{proof}

\subsection{Nonseparability of $f_*(\pi_1(Z))<\pi_1(X)$}

In this subsection, we prove that the subgroup $f_*(\pi_1(Z))<\pi_1(X)$ constructed in Proposition \ref{constructnonseparable} is not separable.

\begin{proposition}\label{proofnonseparable}
For the space $X$ constructed in subsection \ref{space} and the generalized immersed surface $(Z,f)$ constructed in Proposition \ref{constructnonseparable}, $f_*(\pi_1(Z))$ is a nonseparable subgroup of $\pi_1(X)=\pi_1(N_1)*_{A',A''}\pi_1(N_2)$.
\end{proposition}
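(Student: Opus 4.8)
The plan is to argue by contradiction: suppose $f_*(\pi_1(Z))$ is separable in $\pi_1(X)$. By Lemma \ref{topology}, $f$ lifts to an embedding $\hat f:Z\hookrightarrow \hat X$, where $\hat X\cong Y$ is the covering space of $X$ corresponding to $f_*(\pi_1(Z))$, and $Z$ is compact. By Scott's topological criterion for separability (\cite{Sc}), separability of $f_*(\pi_1(Z))$ means that, possibly after passing to the intermediate finite cover provided by Scott's criterion, the compact subcomplex $\hat f(Z)\subset \hat X$ lifts (or rather: the covering $\hat X\to X$ restricted to a neighborhood of $\hat f(Z)$ factors through a finite cover) so that $\hat f(Z)$ embeds in a \emph{finite} cover $\bar X\to X$. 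In other words, there is a finite cover $q:\bar X\to X$ together with an embedding $j:Z\hookrightarrow\bar X$ with $q\circ j = f$. I will show this is impossible by tracking the edge structure through the infinite cyclic covers and using the arithmetic of the intersection numbers in Proposition \ref{constructnonseparable} (5)--(8), exactly in the spirit of \cite{Liu}, \cite{RW}, and Proposition 4.11 of \cite{Sun}.

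First I would set up the combinatorics. The graph of spaces structure on $\bar X$ has vertex pieces covering $N_1$ or $N_2$ and edge pieces covering $\tilde N_1'\times I$ or $\tilde N_1''\times I$; because $\bar X\to X$ is finite, every vertex piece is a \emph{finite} cover of $N_1$ or $N_2$, and in particular the infinite cyclic covers $\hat N_{1,1}, \hat N_{1,2}, \hat N_{2,j}$ through which $\hat f$ factors cannot embed in any finite cover. The key is to follow the fibered classes: $S_{1,1}$ and $S_{1,2}$ are fibers of $N_1$ with classes $\gamma_{1,1},\gamma_{1,2}$, and the $2n$ surfaces $S_{2,j}$ are parallel fibers of $N_2$ with class $\gamma_2$. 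In $\bar X$, the surface $j(S_{1,1})$ together with its adjacent edges must still land in a vertex piece covering $N_1$, and the adjacent edge spaces are (copies of) finite covers of $\tilde N_1'\times I$ resp. $\tilde N_1''\times I$; the number of distinct edge-copies it meets on each side is controlled by the divisibility of $\gamma_{1,1}|_{A'}$ and $\gamma_{1,1}|_{A''}$, which by (5) are $(n+1)\gamma_2|_{A'}$ and $n\,\gamma_2|_{A''}$. Tracing the $2n$ intervals of $Z$ through $j$, I would set up a bipartite-graph/degree count: each $S_{2,j}$ has exactly two incident intervals, while $S_{1,1}$ has $2n+1$ and $S_{1,2}$ has $2n-1$ (condition (2)), with two intervals $I_1, I_2$ both joining $S_{1,1}$ to $S_{2,1}$ (condition (3)). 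The asymmetry $(n+1)$ vs.\ $(n-1)$ forces a parity/counting contradiction once one demands that the dual graph of $\hat f(Z)$ embed compatibly into the (finite, hence ``balanced'') graph of spaces of $\bar X$ — essentially the same Euler-characteristic / edge-counting obstruction as in \cite{Liu} and \cite{Sun}.

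Concretely, the steps in order: (1) invoke Scott's criterion to replace ``separable'' by ``$Z$ embeds in a finite cover $\bar X$ of $X$ compatibly with $f$''; (2) decompose $\bar X$ as a finite graph of spaces with vertex pieces finitely covering $N_1$, $N_2$ and edge pieces finitely covering $\tilde N_i'\times I$, $\tilde N_i''\times I$, and observe $\pi_1$ of each vertex/edge piece is finite-index in $\pi_1(N_k)$, $A'$ or $A''$; (3) lift $S_{1,1}, S_{1,2}$ into their vertex pieces and, using Proposition \ref{constructnonseparable}(5)--(8) together with the infinite cyclic cover analysis of subsection \ref{complicated}, count how many distinct edge-space-copies each lifted fibered surface meets — getting the numbers $n+1$, $n-1$, $n$, $n$, and $1$ for the $S_{2,j}$; (4) assemble these counts into a statement about the dual graph of $j(Z)\subset\bar X$ and show the required edge-identifications are incompatible with the parities/degrees dictated by $n+1\ne n-1$, yielding the contradiction. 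The main obstacle will be step (3)--(4): making the passage from ``$Z$ embeds in the infinite cyclic cover tower'' to ``$Z$ cannot embed in a finite cover'' fully rigorous requires carefully controlling, via the retraction property Proposition \ref{2V2E}(5) and the primitivity of $\gamma_{1,1},\gamma_{1,2},\gamma_2$, that the restriction of the finite cover $\bar X\to X$ to the relevant vertex pieces still ``remembers'' enough of the infinite cyclic covering to make the intersection-number bookkeeping go through — this is exactly the place where the $(n+1)/(n-1)$ asymmetry, rather than any softer property of $A$, is doing the work.
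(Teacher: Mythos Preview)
Your overall framework is right --- assume separability, invoke Scott's criterion to get an embedding $\bar f:Z\hookrightarrow\bar X$ into a finite cover, and then derive a numerical contradiction from the arithmetic of the fibered classes. But the middle of the argument has a genuine gap, and the endgame is misdirected.

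The missing step is the construction of a global cohomology class $\zeta\in H^1(\bar X;\mathbb Z)$ that vanishes on $\bar f_*(\pi_1(Z))$. In the paper this is Lemma~\ref{cohomologyclass}, and it does real work: one first checks, using condition~(5) of Proposition~\ref{constructnonseparable}, that the fibered classes dual to the surfaces $Z\cap\bar X_v$ in adjacent vertex pieces agree on the intervening edge piece, and then one carefully builds a map $\bar X\to S^1$ collapsing $\bar f(Z)$ to a point (this last part uses condition~(8) on the intervals in $c'\times[0,1]$). With $\zeta$ in hand, one passes to a further cyclic cover $\breve X\to\bar X$ on which $\zeta$ pulls back to a class that is \emph{primitive} on every vertex and edge piece. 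Only then do the covering degrees become clean enough to compare. Your proposal never produces such a class; you try to run the degree bookkeeping directly in $\bar X$, but there the vertex piece $\bar N_1$ containing $S_{1,1}$ is some $d$-fold cyclic cover of $N_1$ with $d$ uncontrolled, and the equations you would extract (of the form $d/(d,n+1)=d/(d,n)$) need not be contradictory.

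You also locate the contradiction in the wrong place. The numbers $n+1$ versus $n-1$ (the valences of $S_{1,1}$ and $S_{1,2}$ on the $A'$-side) are not what clash; rather, one uses condition~(3) --- the two intervals $I_1,I_2$ both joining $S_{1,1}$ to $S_{2,1}$, one through $\tilde N_1'\times I$ and the other through $\tilde N_1''\times I$ --- to get a length-two cycle in the dual graph. Comparing degrees around this cycle in $\breve X$ yields
\[
(n+1)\cdot\deg(\breve N_1'\to \tilde N_1')=\deg(\breve N_1\to N_1)=n\cdot\deg(\breve N_1''\to \tilde N_1'')
\]
from the $N_1$ side and $\deg(\breve N_1'\to \tilde N_1')=\deg(\breve N_2\to N_2)=\deg(\breve N_1''\to \tilde N_1'')$ from the $N_2$ side, forcing $n+1=n$. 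The surface $S_{1,2}$ and the number $n-1$ play no role in the final contradiction; they are there so that $Z$ is connected and the algebraic intersection bookkeeping closes up.
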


The proof is similar to the proof of Proposition 4.10 of \cite{Sun}, and the idea goes back to \cite{Liu} and \cite{RW}. The only difference is that, in the current situation, $f:Z\to X$ is not a $\pi_1$-injective map. However, the subgroup $f_*(\pi_1(Z))<\pi_1(X)$ is still manageable, since we showed that $Z$ lifts to be embedded in $\hat{X}$ (Lemma \ref{complicated}).

\begin{proof}
We suppose that $f_*(\pi_1(Z))<\pi_1(X)$ is separable, and get a contradiction.

Recall that we have constructed the covering space $\hat{X}$ of $X$ corresponding to $f_*(\pi_1(Z))$. Lemma \ref{topology} implies that $f:Z\to X$ lifts to an embedding $\hat{f}:Z\hookrightarrow \hat{X}$. Since $Z$ is a compact space, by Scott's topological interpretation of separability (\cite{Sc}), there exists a finite cover $\bar{X}\to X$ such that $\hat{X}\to X$ factors through $\bar{X}$, and $f:Z\to X$ lifts to an embedding $\bar{f}:Z\hookrightarrow \bar{X}$.

We first prove the following lemma.

\begin{lemma}\label{cohomologyclass}
There exists a nontrivial cohomology class $\zeta\in H^1(\bar{X};\mathbb{Z})$, such that $\zeta|_{\bar{f}_*(\pi_1(Z))}=0$, as a homomorphism from $\pi_1(\bar{X})$ to $\mathbb{Z}$.
\end{lemma}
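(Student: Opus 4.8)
The plan is to produce $\zeta$ directly from the algebraically fibered structure on $\pi_1(X) = \pi_1(N_1)*_{A',A''}\pi_1(N_2)$, pulled back to the finite cover $\bar{X}$, and then to verify that its restriction to $\bar{f}_*(\pi_1(Z))$ is forced to be zero by the numerical "intersection number" conditions built into Proposition \ref{constructnonseparable}. First I would recall that $X$ carries a graph-of-spaces structure whose vertex pieces are $N_1,N_2$ and whose edge pieces are $\tilde{N}_1'\times I$ and $\tilde{N}_1''\times I$; combining the fibered classes $\gamma_{1,1},\gamma_{1,2}\in H^1(N_1;\mathbb{Z})$ and $\gamma_2\in H^1(N_2;\mathbb{Z})$ from Proposition \ref{constructnonseparable}, whose restrictions to $A'$ and $A''$ match up (up to the scalars $n\pm 1$ and $n$), one does \emph{not} immediately get a single class on $X$, but the infinite cyclic covers $\hat{N}_{1,1},\hat{N}_{1,2},\hat{N}_2$ assembled in subsection \ref{complicated} do combine into an infinite cyclic cover of a suitable piece; the relevant point is that the covering $\hat{X}\to X$ has covering group a free abelian (indeed infinite cyclic on each "level") and hence $\hat{X}\to X$ factors through a cover $X_\infty\to X$ classified by a class in $H^1(X;\mathbb{Z})$ after passing to a finite index subgroup. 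So I would first take the finite cover $\bar{X}\to X$ furnished by Scott's criterion, and argue that $\bar{X}$ itself admits a nontrivial class $\zeta$ whose kernel contains $\bar{f}_*(\pi_1(Z))$.

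The cleanest route is the following. Since $\bar{X}\to X$ is finite and $\hat{X}\to X$ factors through it, and since $\hat{X}$ is the cover corresponding to $f_*(\pi_1(Z))$ (Lemma \ref{topology}), the subgroup $\bar{f}_*(\pi_1(Z)) < \pi_1(\bar{X})$ is exactly the image of $\pi_1(\hat X)$, i.e.\ it is the full fundamental group of the intermediate cover $\hat X \to \bar X$. Now $\hat X$ was built in subsection \ref{complicated} with a specific graph-of-spaces structure: the vertex pieces over $N_1$ are infinite cyclic covers $\hat N_{1,1}^*, \hat N_{1,2}^*$, the vertex pieces over $N_2$ are the $2n$ copies $\hat N_{2,j}$, each itself an infinite cyclic cover of $N_2$. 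The key structural fact I would extract is that $\hat X$ is \emph{not} the cover of $\bar X$ corresponding to the kernel of some $H^1(\bar X;\mathbb Z)\to \mathbb Z$ unless a compatibility of the $\mathbb Z$-actions holds — but the cyclic cover $\hat N_{2,j}\to N_2$ on each copy is dictated by $\gamma_2$, and these glue consistently precisely because $\gamma_{1,1}|_{A'} = (n+1)\gamma_2|_{A'}$, $\gamma_{1,2}|_{A'}=(n-1)\gamma_2|_{A'}$, etc. Concretely: define $\zeta$ on $\pi_1(\bar X)$ by declaring it on each vertex group to be (a multiple of) the pullback of $\gamma_{1,1}$, $\gamma_{1,2}$, or $\gamma_2$, with multiplicities chosen (using the scalars from condition (5) of Proposition \ref{constructnonseparable}) so they agree on all edge groups; this is possible because $\operatorname{lcm}$-type bookkeeping of $n+1$, $n-1$, $1$ closes up. Then $\zeta\in H^1(\bar X;\mathbb Z)$ is nontrivial (it is nonzero already on, say, the $\gamma_2$-direction), and its kernel is exactly $\pi_1$ of the corresponding cyclic cover, which by construction contains $\pi_1(\hat X) = \bar f_*(\pi_1(Z))$ — because $\hat X$ was assembled out of the infinite cyclic covers cut out by precisely these classes.

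I would then give the verification $\zeta|_{\bar f_*(\pi_1(Z))} = 0$ directly and combinatorially rather than via the covering-space bookkeeping, as a cross-check and because it is what the later argument will use: every generator of $\pi_1(Z)$ is represented by a loop lying in a single vertex surface $S_{1,i}$ or $S_{2,j}$, or in an edge interval $I_k$; on a surface $S_{2,j}$ the class $\gamma_2$ restricts trivially since $S_{2,j}$ is a fiber surface (a fiber is null-homologous after capping, i.e.\ $\gamma_2$ kills $\pi_1$ of the fiber), and similarly $\gamma_{1,1},\gamma_{1,2}$ kill $\pi_1(S_{1,1}),\pi_1(S_{1,2})$; on an interval the contribution is controlled by the second-coordinate projection, which condition (3) of the generalized-immersed-surface definition forces to be the identity, so traversing $I_k$ contributes exactly one "unit" — and the global consistency, i.e.\ that all these units cancel around any loop of $Z$, is exactly what the matching of multiplicities $n+1, n-1, n, 1$ in conditions (5)–(8) of Proposition \ref{constructnonseparable} guarantees. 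The main obstacle I anticipate is the bookkeeping in the gluing step: one must be careful that the $2n$ distinct copies $\hat N_{2,j}$ receive \emph{consistently oriented} cyclic covers and that the $n+1$ (resp.\ $n-1$) edges emanating from $S_{1,1}^*$ (resp.\ $S_{1,2}^*$) with marking $1$ land in distinct copies of $\hat N_1'\times I$ — this is asserted in subsection \ref{complicated} citing the proof of Proposition 4.10 of \cite{Sun} — and only under that distinctness does the naive "define $\zeta$ vertexwise and check edges" actually produce a well-defined class rather than a class on a proper further cover. Once that distinctness is in hand, the construction of $\zeta$ and the vanishing on $\bar f_*(\pi_1(Z))$ are essentially forced.
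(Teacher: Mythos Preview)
Your overall plan---define a class on each vertex piece of $\bar X$ using (pullbacks of) the fibered classes $\gamma_{1,1},\gamma_{1,2},\gamma_2$, check compatibility along edge groups, and then argue that the result kills $\bar f_*(\pi_1(Z))$---is the same outline the paper follows. But there is a real gap at the last step, and it is exactly the point the paper singles out as requiring work.

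Once you have compatible classes on the vertex pieces of $\bar X$, Mayer--Vietoris tells you they extend to a class on $\bar X$, but that extension is \emph{not unique}: it is only well defined up to classes pulled back from $H^1$ of the dual graph of $\bar X$. In particular there is no reason an arbitrary extension vanishes on loops of $Z$ that traverse the dual graph via the intervals $I_k$. The paper states this explicitly (``such a class is not unique, and may not vanish on $\bar f_*(\pi_1(Z))$'') and then resolves it by building an actual map $\pi:\bar X\to S^1$: on each vertex space $\pi$ is the fibration map sending the embedded fiber surface of $Z$ to $1\in S^1$; on each edge space $\bar N_1'\times[0,1]$ the two boundary maps are homotopic (by your edge-compatibility check), and the paper shows that the homotopy can be chosen so that \emph{all} interval images in that edge space go to $1\in S^1$. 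This last point is where condition~(8) of Proposition~\ref{constructnonseparable} (disjoint intervals in $c'\times[0,1]$, cyclic-order preservation) is used: after correcting the homotopy so that \emph{one} interval maps to $1$, the cyclic-order condition forces the same for the others. Your sentence ``traversing $I_k$ contributes exactly one `unit'\,\ldots\ and the global consistency\,\ldots\ is exactly what the matching of multiplicities\,\ldots\ guarantees'' skips this: the matching of multiplicities $(n+1,n-1,n,1)$ gives edge-group compatibility (hence \emph{some} extension exists), but it does not by itself pin down the values on the intervals. You need to explain how to choose the extension over each edge space so that every $\bar f(I_k)$ is sent to a null-homotopic path, and that argument genuinely uses condition~(8), not just condition~(5).

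Two smaller points. First, $\hat X\to X$ is not a regular cover, so speaking of its ``covering group'' being free abelian is not correct; what is true (and what you need) is that each vertex piece of $\hat X$ is an \emph{infinite cyclic} cover of the corresponding vertex piece of $X$, so each vertex piece of the intermediate cover $\bar X$ is a \emph{finite cyclic} cover---this is how the paper identifies the class on $\bar N_1$ as $\frac{1}{d}(\bar p_1)^*(\gamma_{1,1})$ and on $\bar N_2$ as $\frac{n+1}{d}(\bar p_2)^*(\gamma_2)$. Second, your ``$\operatorname{lcm}$-type bookkeeping of $n+1,n-1,1$'' is not what happens: the scalars are determined by the covering degree $d$ of the given vertex piece and the number of $Z$-surfaces it contains (namely $\frac{n+1}{(d,n+1)}$ copies of $S_{2,j}$ in the adjacent $\bar N_2$), and the agreement on edge groups comes from $\gamma_{1,1}|_{A'}=(n+1)\gamma_2|_{A'}$ etc., not from an $\operatorname{lcm}$.
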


\begin{proof}

Each vertex piece of $\bar{X}$ is a finite volume hyperbolic $3$-manifold (finite cover of $N_1$ or $N_2$), and its intersection with $Z$ is a (possibly disconnected) oriented surface. So we get a cohomology class in each vertex piece, and we will show that these cohomology classes in vertex pieces can be "pasted together" to get the desired cohomology class on $\bar{X}$.

Each edge space of $\bar{X}$ is a finite cover of $\tilde{N}_1'\times [0,1]$ or $\tilde{N}_1''\times [0,1]$. We work with one edge space, and without loss of generality, we assume that this edge space is a finite cover of $\tilde{N}_1'\times [0,1]$, and denote it by $\bar{N}_1'\times [0,1]$. This edge space connects two vertex spaces of $\bar{X}$, and we denote them by $\bar{N}_1$ and $\bar{N}_2$. Then they are finite covers of $N_1$ and $N_2$ respectively.

Since $Z$ intersects with all vertex pieces of $\hat{X}$, $Z$ also intersects with all vertex pieces of $\bar{X}$. So $Z\cap \bar{N}_1=S_{1,1}$ or $S_{1,2}$, and we assume that $Z\cap \bar{N}_1=S_{1,1}$. Then $\bar{N}_1\to N_1$ is a $d$-sheet cyclic cover along $S_{1,1}$ for some $d\in \mathbb{Z}^+$. It is easy to check that, in the dual graph of $\bar{X}$, there are $(d,n+1)$ edges with marking $1$ that go through the vertex corresponding to $\bar{N_1}$. Here $(d,n+1)$ denotes the greatest common divisor of $d$ and $n+1$. Moreover, condition (7) of Proposition \ref{constructnonseparable} implies that each edge space as above contains exactly $\frac{n+1}{(d,n+1)}$ many $\bar{f}$-images of intervals in $Z$. The proof of this claim is same with the argument in Proposition 4.10 of \cite{Sun}. It is an elementary application of covering space theory, so we do not give the proof here.

So there are exactly $\frac{n+1}{(d,n+1)}$ many intervals in $Z$ that are mapped to $\bar{N}_1'\times [0,1]$, and it implies that $\bar{N}_2$ contains exactly $\frac{n+1}{(d,n+1)}$ many $\bar{f}$-images of $S_{2,j}$ in $Z$. Moreover, $\bar{N}_1\to N_1$ is a $d$-sheet cyclic cover implies $\bar{N}_1'\to N_1'$ is a $\frac{d}{(d,n+1)}$-sheet cyclic cover on the edge space. Since $\gamma_2|_{A'}:A'\to \mathbb{Z}$ is surjective, $\bar{N}_2\to N_2$ is a $\frac{d}{(d,n+1)}$-sheet cyclic cover corresponding to $\gamma_2\in H^1(N_2;\mathbb{Z})$.

Let $\bar{p}_1:\bar{N}_1\to N_1$ and $\bar{p}_2:\bar{N}_2\to N_2$ be the restrictions of the covering map $\bar{X}\to X$. Then $S_{1,1}\subset \bar{N}_1$ is dual to the cohomology class $$\frac{1}{d}(\bar{p}_1)^*(\gamma_{1,1})\in H^1(\bar{N}_1;\mathbb{Z}),$$ and the union of $\frac{n+1}{(d,n+1)}$ many $S_{2,j}$ in $\bar{N}_2$ is dual to the cohomology class $$\frac{n+1}{(d,n+1)}\cdot\frac{1}{\frac{d}{(d,n+1)}}(\bar{p}_2)^*(\gamma_2)=\frac{n+1}{d}(\bar{p}_2)^*(\gamma_2)\in H^1(\bar{N}_2;\mathbb{Z}).$$ Since $\gamma_{1,1}|_{A'}=(n+1)\gamma_2|_{A'}$, the restriction of these two cohomology classes give the same homomorphism from $\pi_1(\bar{N}'_1)<A'$ to $\mathbb{Z}$. So the cohomology classes on $\bar{N}_1$ and $\bar{N}_2$ agree with each other on $\bar{N}'_1\times [0,1]$.

We have proved that the cohomology classes on vertex spaces of $\bar{X}$ defined by oriented surfaces in $Z$ agree with each other on edge spaces. The Mayer-Vietoris sequence implies these cohomology classes on vertex space can be pasted to a cohomology class on $\bar{X}$. However, such a class is not unique, and may not vanish on $\bar{f}_*(\pi_1(Z))$.

Now we construct a homotopic nontrivial map $\pi:\bar{X}\to S^1$ such that the composition $\pi\circ \bar{f}:Z\to \bar{X}\to S^1$ is a constant map. Then we get the desired nontrivial cohomology class $\zeta\in H^1(\bar{X};\mathbb{Z})$ by pulling-back a generator of $H^1(S^1;\mathbb{Z})$ via $\pi^*$, since $\zeta|_{\bar{f}_*(\pi_1(Z))}=0$ holds.

On each vertex space $\bar{X}_v$ of $\bar{X}$, since $Z\cap \bar{X}_v$ is a fibered surface of $\bar{X}_v$, it induces a map $\pi|_{\bar{X}_v}:\bar{X}_v\to S^1$ such that $\pi|_{\bar{X}_v}(Z\cap \bar{X}_v)=1=e^{i0}\in S^1$. Now we take an edge space $\bar{X}_e$ of $\bar{X}$, and assume $\bar{X}_e=\bar{N}_1'\times [0,1]$. Since we have proved that induced cohomology classes from two adjacent vertex spaces of $\bar{N}_1'\times [0,1]$ agree with each other, the induced maps $\bar{N}_1'\times \{0\}\to S^1$ and $\bar{N}_1'\times \{1\}\to S^1$ are homotopy to each other.

So there is a map $\pi'':\bar{N}_1'\times [0,1]\to S^1$ extending the maps already defined on $\bar{N}_1'\times \{0,1\}$ induced from vertex spaces. The problem is that $\pi''$ may not map edges of $Z$ in $\bar{N}_1'\times [0,1]$ to $1\in S^1$, even up to homotopy. Take two points $p_0\in c'\times \{0\}$ and $p_1\in c'\times \{1\}$, such that there is an interval $I\subset Z$ such that $\bar{f}(0)=p_0$ and $\bar{f}(1)=p_1$ for end points $0,1\in I$. Then $\sigma=\pi''\circ \bar{f}|_{I}:I\to S^1$ maps both $0,1\in I$ to $1\in S^1$. Let $\sigma':I\to S^1$ be the inverse path of $\sigma$ in $S^1$, then we define a map $\pi':\bar{N}_1'\times [0,2]\to S^1$ by
$$\pi'(n,t)=\left\{
\begin{array}{cc}
\pi''(n,t) & \text{if\ } t\in [0,1]\\
\sigma'(t-1)\cdot\pi''(n,1) & \text{if\ } t\in [1,2].
\end{array}
\right.
$$ Here the operation in $\sigma'(t-1)\cdot\pi''(n,1)$ is the multiplication of $S^1$. The map $\pi'$ has the same definition as $\pi''$ on the two boundaries of $\bar{N}_1'\times [0,2]$, and its restriction on $f(I)\cup (\{p_1\}\times [1,2])$ is a null-homotopic map to $S^1$ relative to the boundary.

For any other interval $I'\subset Z$ with $\bar{f}(I')\subset \bar{N}_1'\times [0,1]$, let $\bar{f}(I')\cap (\bar{N}_1'\times \{1\})=\{p_1'\}$. Condition (8) of Proposition \ref{constructnonseparable} implies pairings preserve cyclic orders and intervals in $c'\times [0,1]$ are disjoint from each other, so the null-homotopy condition is also true for $\bar{f}(I')\cup (\{p_1'\}\times [1,2])\subset \bar{N}_1'\times [0,2]$. Then we can homotopy $\pi'$ relative to $\bar{N}_1'\times \{0,2\}$ and resize $\bar{N}_1'\times [0,2]$ to $\bar{N}_1'\times [0,1]$ to get the desired $\pi|_{\bar{X}_e}:\bar{X}_e=\bar{N}_1'\times [0,1]\to S^1$, such that it maps $\bar{f}(Z)\cap \bar{X}_e$ to $1\in S^1$.

So we get a map $\pi:\bar{X}\to S^1$ such that $\pi\circ \bar{f}:Z\to S^1$ is a constant map. Since the restriction of $\pi$ on each vertex space corresponds to a nontrivial first cohomology class, $\pi$ is homotopically nontrivial. So the proof is done.

\end{proof}

Now we continue with the proof of Proposition \ref{proofnonseparable}.

Let $k_1,\cdots,k_s$ be positive integers such that the restriction of $\zeta$ on each vertex or edge space of $\bar{X}$ is a $k_i$-multiple of a primitive cohomology class. Let $K$ be the least common multiple of $k_1,\cdots,k_s$. Then we take the cyclic cover of $\bar{X}$ corresponding to the kernel of $\pi_1(\bar{X})\xrightarrow{\zeta}\mathbb{Z}\to \mathbb{Z}/K\mathbb{Z}$, and get a finite cover $q:\breve{X}\to \bar{X}$ such that $Z$ lifts to be embedded into $\breve{X}$. Then $\frac{1}{K}q^*(\zeta)$ is a primitive cohomology class in $H^1(\breve{X};\mathbb{Z})$, and its restriction on each edge and vertex space of $\breve{X}$ is also primitive. Let $p:\breve{X}\to X$ be the induced finite cover from $\breve{X}$ to $X$. 

Let $\breve{N}_1$ and $\breve{N}_2$ be the elevations of $N_1$ and $N_2$ in $\breve{X}$ such that $S_{1,1}$ is contained in $\breve{N}_1$ and $S_{2,1}$ is contained in $\breve{N}_2$. Since $I_1$ and $I_2$ are edges in $Z$ connecting $S_{1,1}$ and $S_{2,1}$ with $f(I_1)\subset \tilde{N}_1'\times [0,1]$ and $f(I_2)\subset \tilde{N}_1''\times [0,1]$, there are edge spaces $\breve{N}_1'\times [0,1]$ and $\breve{N}_1''\times [0,1]$ in $\breve{X}$ connecting $\breve{N}_1$ and $\breve{N}_2$. Then $\breve{N}_1'$ and $\breve{N}_1''$ are finite covers of $\tilde{N}_1'$ and $\tilde{N}_1''$ respectively.

Since the restriction of $\zeta$ on $\bar{N}_1\cup \bar{N}_1'\times [0,1]\cup \bar{N}_1''\times [0,1]\subset \bar{X}$ is dual to the fibered surface $S_{1,1}\subset \bar{N}_{1,1}$, and $S_{1,1}$ is also a fibered surface in $N_1$, the restricted map $$p|:\breve{N}_1\cup \breve{N}_1'\times [0,1]\cup \breve{N}_1''\times [0,1]\to N_1\cup \tilde{N}_1'\times [0,1]\cup \tilde{N}_1''\times [0,1]$$ is a finite cyclic cover on each piece. The finite cyclic covers on these three pieces correspond to the cohomology class $\gamma_{1,1}\in H^1(N_1;\mathbb{Z})$ and its restrictions on $A'$ and $A''$ respectively.

Let $p|_{\breve{N}_1}:\breve{N}_1\to N_1$ be the restriction of $p$ on $\breve{N}_1$, and suppose this is a degree-$D$ cover. Then the surface $S_{1,1}$ in $\breve{N}_1$ is dual to $\frac{1}{D}(p|_{\breve{N}_1})^*(\gamma_{1,1})$, which is also equal to $\frac{1}{K}q^*(\zeta)|_{\pi_1(\breve{N}_1)}$.
Since $\gamma_{1,1}|_{A'}$ is an $(n+1)$-multiple of a primitive cohomology class in $H^1(N_1';\mathbb{Z})$, $\gamma_{1,1}|_{A''}$ is an $n$-multiple of a primitive class in $H^1(N_1'';\mathbb{Z})$, and the restriction of $\frac{1}{K}q^*(\zeta)|_{\pi_1(\breve{N}_1)}=\frac{1}{D}(p|_{\breve{N}_1})^*(\gamma_{1,1})$ on $\pi_1(\breve{N}_1')$ and $\pi_1(\breve{N}_1'')$ are primitive classes, we have that $D$ is a multiple of $n(n+1)$ and the following equation hold:
$$(n+1)\cdot \text{deg}(\breve{N}_1'\to N_1')=\text{deg}(\breve{N}_1\to N_1)=n\cdot\text{deg}(\breve{N}_1''\to N_1'').$$

Similarly, by applying the same argument to $\breve{N}_2\to N_2$, we get $$\text{deg}(\breve{N}_1'\to N_1')=\text{deg}(\breve{N}_2\to N_2)=\text{deg}(\breve{N}_1''\to N_1'').$$

So we get a contradiction, and $f_*(\pi_1(Z))<\pi_1(X)$ is not separable.
\end{proof}

\subsection{Proof of Theorem \ref{main}}

In this subsection, we give the proof of Thoerem \ref{main}. The proof is just a combination of Proposition \ref{matchingfibering}, Proposition \ref{2V2E}, Proposition \ref{constructnonseparable} and Proposition \ref{proofnonseparable}.

\begin{proof}
We start with a nontrivial geometrically finite amalgamation $\pi_1(M_1)*_A\pi_1(M_2)$ of finite volume hyperbolic $3$-manifolds $M_1$ and $M_2$. Then Proposition \ref{matchingfibering} produces a subgroup of $\pi_1(M_1)*_A\pi_1(M_2)$ with a two-vertex one-edge dual graph and an algebraically fibered structure. For simplicity, we still denote it by $\pi_1(M_1)*_A\pi_1(M_2)$.

Then Proposition \ref{2V2E} gives us a further subgroup $\pi_1(N_1)*_{A',A''}\pi_1(N_2)$ of $\pi_1(M_1)*_A\pi_1(M_2)$, which has a two-vertex two-edge graph of group structure, and an induced algebraically fibered structure. The subgroup $\pi_1(N_1)*_{A',A''}\pi_1(N_2)$ also satisfies other conditions in Proposition \ref{2V2E}.

In subsection \ref{space}, we constructed a space $X$ such that $\pi_1(X)\cong \pi_1(N_1)*_{A',A''}\pi_1(N_2)$. In Proposition \ref{constructnonseparable}, we constructed a generalized immersed surface $(Z,f)$ in $X$. Then Proposition \ref{proofnonseparable} implies that $f_*(\pi_1(Z))$ is a nonseparable subgroup of $\pi_1(X)\cong \pi_1(N_1)*_{A',A''}\pi_1(N_2)$.

Lemma \ref{subgroup} implies that $f_*(\pi_1(Z))$ is also a nonseparable subgroup of $\pi_1(M_1)*_A\pi_1(M_2)$, thus $\pi_1(M_1)*_A\pi_1(M_2)$ is not LERF.
\end{proof}

\section{NonLERFness of closed arithmetic hyperbolic $4$-manifold groups}\label{arithmetic4section}

In this section, we prove that Theorem \ref{main} implies Theorem \ref{arithmetichyperbolic4}. Actually, Theorem \ref{main} also directly implies Corollary \ref{arithmetichyperbolic}, by applying a similar argument.

The proof of Theorem \ref{arithmetichyperbolic4} follows the same idea in \cite{Sun}, except that we have a stronger nonLERFness result (Theorem \ref{main}) in this paper.

\begin{proof}
Let $M$ be a closed arithmetic hyperbolic $4$-manifold, then \cite{VS} implies that $M$ is an arithmetic hyperbolic manifold of simplest type. So there exist a totally real number field $K$ and a nondegenerate quadratic form $f:K^{5}\to K$ defined over $K$, such that the negative inertial index of $f$ is $1$, and $f^{\sigma}$ is positive definite for any non-identity embedding $\sigma:K\to \mathbb{R}$. Moreover, $\pi_1(M)$ is commensurable with $SO_0(f;\mathcal{O}_K)$. So to prove $\pi_1(M)$ is not LERF, we need only to prove $SO_0(f;\mathcal{O}_K)$ is not LERF.

We diagonalize the quadratic form $f$ such that the symmetric matrix defining $f$ is $A=\text{diag}(k_1,k_2,k_3,k_4,k_5)$ with $k_1,k_2,k_3,k_4>0$ and $k_5<0$.

The quadratic form $f$ has two quadratic subforms defined by $\text{diag}(k_1,k_2,k_3,k_5)$ and $\text{diag}(k_1,k_2,k_4,k_5)$ respectively. These two subforms satisfy the conditions for defining arithmetic groups in $\text{Isom}_+(\mathbb{H}^3)$, and we denote them by $f_1$ and $f_2$ respectively.

$SO_0(f_1;\mathcal{O}_K)$ and $SO_0(f_2;\mathcal{O}_K)$ are both subgroups of $SO_0(f;\mathcal{O}_K)<\text{Isom}_+(\mathbb{H}^4)$. Each of them fix a $3$-dimensional totally geodesic hyperplane in $\mathbb{H}^4$, and we denote them by $P_1$ and $P_2$ respectively. Then $P_1$ and $P_2$ intersect with each other perpendicularly along a $2$-dimensional totally geodesic plane $P$. So $M_i=P_i/SO_0(f_i;\mathcal{O}_K)$ is a hyperbolic $3$-orbifold for each $i=1,2$. Moreover, it is easy to see that $SO_0(f_1;\mathcal{O}_K)\cap SO_0(f_2;\mathcal{O}_K)=SO_0(f_3;\mathcal{O}_K)$, with $f_3$ be defined by $\text{diag}(k_1,k_2,k_5)$. Then $SO_0(f_3;\mathcal{O}_K)$ is the subgroup of $SO_0(f;\mathcal{O}_K)$ that fixes $P$. It is easy to see that $P/SO_0(f_3;\mathcal{O}_K)$ is a totally geodesic $2$-orbifold $\Sigma$ in $M_1\cap M_2$.

We first take a torsion-free finite index subgroup $\Lambda<\pi_1(\Sigma)$, and consider it as subgroups of $\pi_1(M_1)$ and $\pi_1(M_2)$. By applying LERFness of hyperbolic $3$-manifold (orbifold) groups, we get torsion free finite index subgroups $\Lambda_i<SO(f_i;\mathcal{O}_K)$ for $i=1,2$, with $\Lambda_1\cap SO_0(f_3;\mathcal{O}_K)=\Lambda_2\cap SO_0(f_3;\mathcal{O}_K)=\Lambda$, and $S=P/\Lambda$ has large enough product neighborhood in $N_1=P_1/\Lambda_1$ and $N_2=P_2/\Lambda_2$. Then there is an obvious map from $N_1\cup_S N_2$ to $\mathbb{H}^4/SO_0(f;\mathcal{O}_K)$, and the induced map on the fundamental group $\Lambda_1*_{\Lambda}\Lambda_2\to SO_0(f;\mathcal{O}_K)$ is injective

So $SO_0(f;\mathcal{O}_K)$ contains a subgroup isomorphic to $\Lambda_1*_{\Lambda}\Lambda_2$. $\Lambda_1*_{\Lambda}\Lambda_2$ is the fundamental group of $N_1\cup_S N_2$, which is the quotient space of two closed hyperbolic $3$-manifolds $N_1, N_2$ by pasting along a closed totally geodesic subsurface $S$. Any totally geodesic subsurface in a hyperbolic $3$-manifold gives a geometrically finite subgroup in the $3$-manifold group. By Theorem \ref{main}, $\Lambda_1*_{\Lambda}\Lambda_2$ is not LERF, so Lemma \ref{subgroup} implies $SO_0(f;\mathcal{O}_K)$ and $\pi_1(M)$ are not LERF.
\end{proof}

As in \cite{Sun}, we have the following corollary of Corollary \ref{arithmetichyperbolic}, which is about nonLERFness of non-arithmetic hyperbolic manifold groups. This corollary covers all known examples of non-arithmetic hyperbolic manifolds with dimension $\geq 5$.

\begin{corollary}
For the following non-arithmetic hyperbolic manifolds:
\begin{itemize}
  \item $M^m$ is a nonarithmetic hyperbolic $m$-manifold given by constructions in \cite{GPS} or \cite{BT}, with $m\geq 5$,
  \item $M^m$ is a nonarithmetic hyperbolic $m$-manifold given by the reflection group of some finite volume polyhedron in $\mathbb{H}^m$, with $m\geq 4$,
\end{itemize}
the fundamental group $\pi_1(M)$ is not LERF.
\end{corollary}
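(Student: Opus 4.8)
The plan is to realize, inside each such $\pi_1(M)$, a subgroup to which one of our two nonLERFness engines applies, and then invoke Lemma~\ref{subgroup}. The two engines are Corollary~\ref{arithmetichyperbolic} (giving nonLERF arithmetic hyperbolic manifold groups in dimension $\geq 4$ not defined by the octonion) and Theorem~\ref{main} (giving nonLERF amalgamations of two finite volume hyperbolic $3$-manifold groups along a nontrivial geometrically finite subgroup). In each case the needed subgroup comes from a totally geodesic suborbifold of $M$, whose fundamental group injects into $\pi_1(M)$; this mirrors the argument of \cite{Sun}, the only improvement being that Theorem~\ref{main} now handles nonabelian (in particular surface) amalgamating subgroups.

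For a nonarithmetic $M^m$ produced by the construction of \cite{GPS} or \cite{BT} with $m\geq 5$: such an $M$ is glued from two noncommensurable arithmetic hyperbolic $m$-manifolds of simplest type along a common connected totally geodesic hypersurface $\Sigma$, and $\Sigma$ is itself an arithmetic hyperbolic $(m-1)$-manifold of simplest type (in the quadratic form model it is cut out by a coordinate hyperplane, with stabilizer the orthogonal group of the restricted form), hence not defined by the octonion. Since $\Sigma\subset M$ is embedded, two-sided and totally geodesic of codimension $1$, one has $\pi_1(\Sigma)\hookrightarrow\pi_1(M)$; as $m-1\geq 4$, Corollary~\ref{arithmetichyperbolic} shows $\pi_1(\Sigma)$ is not LERF, and therefore neither is $\pi_1(M)$.

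For a nonarithmetic $M^m$ coming from the reflection group of a finite volume Coxeter polyhedron $P\subset\mathbb{H}^m$ with $m\geq 4$, I would argue by induction on $m$. For the base case $m=4$: pick two facets $F,F'$ of $P$ meeting along a ridge $R=F\cap F'$. The stabilizers $\Gamma_F,\Gamma_{F'}$ in the reflection group $\Gamma$ of the hyperplanes $\langle F\rangle,\langle F'\rangle\cong\mathbb{H}^3$ are the finite covolume reflection groups of the hyperbolic $3$-polytopes $F,F'$, while $\Gamma_F\cap\Gamma_{F'}$ is (up to finite index) the Fuchsian reflection group $\Gamma_R$ of the hyperbolic $2$-polytope $R$; the suborbifolds $\langle F\rangle/\Gamma_F$ and $\langle F'\rangle/\Gamma_{F'}$ are totally geodesic in $M$ and meet transversally along $\langle R\rangle/\Gamma_R$. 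Passing to torsion-free finite index subgroups and using LERFness of $3$-orbifold groups, exactly as in the proof of Theorem~\ref{arithmetichyperbolic4}, one arranges a common closed (or cusped finite volume) surface subgroup $\Lambda$ with embedded two-sided product collars, so that the subgroup of $\Gamma$ generated is isomorphic to $\Lambda_F*_\Lambda\Lambda_{F'}$ with $\Lambda_F,\Lambda_{F'}$ finite volume hyperbolic $3$-manifold groups and $\Lambda$ a geometrically finite (totally geodesic, hence quasiconvex) surface subgroup; Theorem~\ref{main} shows this is not LERF. For $m\geq 5$: any facet $F$ of $P$ is a finite volume hyperbolic $(m-1)$-polytope, its reflection group $\Gamma_F$ is a finite covolume lattice in $\mathrm{Isom}(\mathbb{H}^{m-1})$ embedded in $\Gamma$, and $\Gamma_F$ is either arithmetic, in which case Corollary~\ref{arithmetichyperbolic} (here $m-1\geq 4$) shows it is not LERF, or nonarithmetic, in which case it falls under the $(m-1)$-dimensional case of the present corollary; in either case $\Gamma_F$, hence $\Gamma$, is not LERF.

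The hard part is not conceptual; it is the bookkeeping of finite covers and product collars needed to upgrade ``two totally geodesic suborbifolds meeting along a totally geodesic surface'' into an honest amalgamated subgroup whose vertex groups are fundamental groups of finite volume hyperbolic $3$-manifolds. This is precisely the argument already executed in the proof of Theorem~\ref{arithmetichyperbolic4} (and in \cite{Sun}), so the one point that genuinely needs to be checked construction by construction is that the relevant gluing hypersurface is arithmetic of simplest type in the \cite{GPS}/\cite{BT} setting, and that the relevant facet stabilizers are finite covolume reflection lattices in the reflection-group setting; both are standard structural features of these constructions.
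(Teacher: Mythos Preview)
Your proposal is correct and matches what the paper intends. The paper gives no proof here; it simply says ``As in \cite{Sun}, we have the following corollary of Corollary~\ref{arithmetichyperbolic}'' and moves on. Your outline—find inside $\pi_1(M)$ either an arithmetic lattice of dimension $\geq 4$ (for the \cite{GPS}/\cite{BT} hypersurface and for the arithmetic facet case) or an amalgam of two finite volume hyperbolic $3$-manifold groups over a totally geodesic surface group (for the $m=4$ reflection case, via the argument of Theorem~\ref{arithmetichyperbolic4})—is exactly the intended route.

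One small point of precision: in your inductive step for reflection groups you call $\Gamma_F$ ``the finite covolume reflection group of the hyperbolic $(m-1)$-polytope $F$.'' The facet $F$ itself is in general \emph{not} a Coxeter polytope (its dihedral angles are face angles of $P$ and need not be submultiples of $\pi$). What is true, and what you need, is that the wall stabilizer $\mathrm{Stab}_\Gamma(H_F)$ modulo $\langle r_F\rangle$ acts on $H_F\cong\mathbb{H}^{m-1}$ as a finite covolume reflection group whose fundamental chamber is a (possibly larger) Coxeter polytope tiled by copies of $F$; this is the standard fact about wall stabilizers in geometric Coxeter groups. With that correction your induction goes through, since the inductive hypothesis only requires $\Gamma_F$ to be commensurable with a reflection lattice, not that $F$ itself be its chamber.
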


\section{Further questions}\label{furtherquestions}

In the proof of Theorem \ref{main}, the nonseparable subgroups we constructed are usually infinitely presented. Moreover, the nonseparable subgroups we constructed for closed arithmetic hyperbolic $4$-manifold groups are always infinitely presented. So we ask the following two questions on the existence of finitely presented nonseparable subgroups.

\begin{question}
For a general group $A$, do geometrically finite amalgamations $\pi_1(M_1)*_A \pi_1(M_2)$ of finite volume hyperbolic $3$-manifold groups contain finitely presented nonseparable subgroups?
\end{question}

\begin{question}
Do closed arithmetic hyperbolic $4$-manifold groups contain finitely presented nonseparable subgroups?
\end{question}

\bigskip

Theorem \ref{main} implies that nontrivial geometrically finite amalgamations of finite volume hyperbolic $3$-manifold groups are not LERF. Then it is natural to ask about amalgamations of finite volume hyperbolic $3$-manifold groups along a group that is geometrically infinite in at least one of the vertex groups.

\begin{question}
Let $M_1,M_2$ be two finite volume hyperbolic $3$-manifolds, $A$ be a nontrivial group, $i_1:A\to \pi_1(M_1)$ and $i_2:A\to \pi_1(M_2)$ be two injective group homomorphisms. If $i_2(A)<\pi_1(M_2)$ is a geometrically infinite subgroup, then in what circumstance, is $\pi_1(M_1)*_{A}\pi_1(M_2)$ LERF?
\end{question}

Since all geometrically infinite subgroups of a finite volume hyperbolic $3$-manifold group are virtual fibered subgroups, $A$ is always a surface group or a free group. So in this case, the abstract group structure of the edge group is not very complicated.

Actually, all results in Section \ref{nonseparablesubgroup} still work if $i_1(A)<\pi_1(M_1)$ is geometrically finite and $i_2(A)<\pi_1(M_2)$ is geometrically infinite (we need to modify the proof of Proposition \ref{2V2E} a little bit). So there are examples of nonLERF amalgamations of two finite volume hyperbolic $3$-manifold groups along a subgroup that is geometrically finite in one group and geometrically infinite in the other group. However, in this case, it seems not easy to get the algebraically fibered structure as in Section \ref{algebraicfibering}, so the general case is difficult to deal with. For a geometrically infinite amalgamation, condition (5) in Proposition \ref{2V2E} never holds, so our current technique is not applicable in this case.

\bigskip

In \cite{Sun}, for $3$-manifolds with empty or tori boundary, a topological criterion on LERFness of groups of such $3$-manifold is proved (in terms of geometric structures on $3$-manifolds). Results of \cite{Sun} and this paper together imply that, for almost all arithmetic hyperbolic manifolds with dimension $\geq 4$ (with possible exceptions in arithmetic hyperbolic $7$-manifolds defined by the octonion), their fundamental groups are not LERF. So we ask the following questions on more general $3$-manifolds and more general finite volume high dimensional hyperbolic manifolds.

\begin{question}\label{general3manifold}
For compact $3$-manifolds with higher genus boundary, is there a topological criterion on LERFness of their fundamental groups?
\end{question}

\begin{question}\label{generalhyperbolic}
Whether all finite volume hyperbolic manifolds with dimension at least $4$ have nonLERF fundamental groups?
\end{question}

For Question \ref{general3manifold}, the author propose the following possible criterion. For a compact $3$-manifold with higher genus boundary, we first do sphere and disc decompositions, then do the torus decomposition without doing the annulus decomposition. Under the torus decomposition, if there are two adjacent pieces that are Seifert fibered spaces or finite volume hyperbolic $3$-manifolds (geometric $3$-manifold with tori boundary), then the result in \cite{Sun} implies this manifold has nonLERF fundamental group. If the above picture does not show up, then the author expects the fundamental group to be LERF.

Daniel Groves informed the author that, under the torus decomposition of $M$, if all pieces are hyperbolic $3$-manifolds with higher genus boundary, then $\pi_1(M)$ is LERF. This follows from \cite{BW} and \cite{PW}. In \cite{BW}, it is proved that, for a $3$-manifold $M$ as above, all finitely generated subgroups of $\pi_1(M)$ are quasiconvex, and \cite{PW} shows that such quasiconvex subgroups are separable.

For Question \ref{generalhyperbolic}, the main difficulty is that we do not know whether the fundamental group of a general finite volume hyperbolic manifold with dimension $\geq 4$ contains $3$-manifold groups as its subgroups. The author also has no idea whether this is true for $7$-dimensional arithmetic hyperbolic manifolds defined by the octonion. It this is true, then probably a similar argument as the proof of Theorem \ref{arithmetichyperbolic4} would work for a general finite volume hyperbolic manifold with dimension $\geq 4$. Theorem \ref{main} may also be useful for proving some (arithmetic) lattices in some other (semisimple) Lie groups are not LERF.

\bigskip

In Proposition \ref{fiberedboundaryslope}, we proved that, for any boundary component $T$ of a cusped hyperbolic $3$-manifold, all but finitely many boundary slopes on $T$ are virtually fibered boundary slopes. So it is natural to ask the following question.

\begin{question}
For a cusped hyperbolic $3$-manifold $M$ and a boundary component $T\subset \partial M$, are all slopes on $T$ virtually fibered boundary slopes?
\end{question}

Proposition \ref{arithmeticslope} implies the answer for cusped arithmetic hyperbolic $3$-manifolds is positive. However, it seems not easy to prove it for a general cusped hyperbolic $3$-manifold. 

If we want to use the proof of Proposition \ref{fiberedboundaryslope}, then we need to know more about the shape of the Thurston norm unit ball when taking a finite cover. If in some finite cover $M'$ of $M$, all codimension-$1$ hyperplanes in $H^1(M';\mathbb{R})$ intersects with some open face of the Thurston norm unit ball, then all slopes on $T$ are virtually fibered boundary slopes. However, it seems we have very few knowledge on the shape of Thurston norm unit ball when taking a finite cover such that the first betti number increases.

If one want to develop a version of Agol's construction of virtual fibered structure in \cite{Ag2} relative to a fixed slope on $T$, then there is some difficulty on the behavior of norm-minimizing surfaces when doing Dehn filling along the fixed slope.

\end{document}